\theoremstyle{plain}
\newtheorem{theorem}{Theorem}[section]
\newtheorem{proposition}[theorem]{Proposition}
\newtheorem{definition}[theorem]{Definition}
\newtheorem{example}[theorem]{Example}
\title[Wreath Product Generalizations of  $(S_{2n},H_{n},\xi)$]
{Wreath Product Generalizations of the Triple $(S_{2n},H_{n},\varphi)$ 
and Their Spherical Functions}
\author{Hiroshi Mizukawa}
\date{}
\address{Department of Mathematics, National Defense Academy of Japan,
 Yokosuka 239-8686, Japan.}
\email{mzh@nda.ac.jp}
\begin{document}
\maketitle
\footnote[0]{Classification number :20C05,05E05,05E10,05E35.}
\begin{abstract}
The symmetric group $S_{2n}$ and the hyperoctaheadral group $H_{n}$
is a Gelfand triple for an arbitrary linear representation $\varphi$ of $H_{n}$. 
Their $\varphi$-spherical functions can be caught as 
transition matrix between suitable symmetric functions and 
the power sums.
We generalize this triplet in the term of wreath product. 
It is shown that our triplet are always  a Gelfand
triple.
 Furthermore we study the relation between their spherical functions and multi-partition version of the ring of symmetric 
functions.
\end{abstract}\keywords{{\it Key Words:} 
finite spherical harmonics, Gelfand triple,
Hecke algebra, zonal polynomial, Schur function, Schur's $Q$-function }

\section{introduction}
This paper deals with spherical  harmonics on a finite Gelfand triple 
which is a generalization of the triple $(S_{2n},H_{n},\varphi)$,
where $S_{2n}$ is the symmetric group of degree $2n$,
$H_{n}$ is the hyperoctahedral group and $\varphi$ is an arbitrary linear character of $H_{n}$.  
We adopt the terminology "Gelfand triple" by the usage of Bump's book \cite{bump}.
One of main purposes of this paper is to describe an explicit form of the irreducible decomposition of 
their induced representations.  
Furthermore we attempt to express their spherical functions as explicit as possible.
%

%
Roughly speaking,  representation theory of wreath products can be said the
multi-partition version of representation theory of the symmetric groups.
Their irreducible representations and characters can be explicitly computed.
It seems to be no new representational theoretical property of wreath products.  
Nevertheless, from the view point of spherical harmonics, they give us interesting mathematical 
objects like a multivariable version of hypergeometric 
functions as their zonal spherical functions \cite{mz,tanakamiz}.
Furthemore, recent study of finite spherical harmonics related to wreath products succeed in finding new 
orthogonal polynomials and combinatorial or  statistical interpretations of permutation representations \cite{scr,mzonal,scto}.
Therefore it is important to find new finite Gelfand triples and analyze their 
structures as finite homogeneous spaces. 

As mentioned above, the triple $(S_{2n},H_{n},\varphi)$ is a Gelfand triple, i.e. $\varphi\uparrow_{H_{n}}^{S_{2n}}$ 
is a multiplicity free as $S_{2n}$-module.
The characters of Hecke algebra associate with  $(S_{2n},H_{n},\varphi)$ are called $\varphi$-spherical functions. 
It is known that
$\varphi$-spherical functions of this case are deeply related to zonal polynomials
and Schur's $Q$-functions \cite{james,stem}.
To be more explicit, $\varphi$-spherical functions appear in coefficients of  the power sums
in these symmetric functions. 
This fact gives an representation theoretical meaning to these symmetric functions.
Algebraically there exist isomorphisms called characteristic maps between the direct sum of their
Hecke algebras and the ring of symmetric functions.  
In this paper, we consider an generalization of the triplet by changing $S_{2n}$ into 
a wreath product $SG_{2n}=G \wr S_{2n}$. 
We define a subgroup $HG_{n}$ of $SG_{2n}$ which is a natural generalization of $H_{n}$
 (details are in Section 2).
In \cite{mzonal}, the author shows the triplet $(SG_{2n},HG_{n},1)$
is Gelfand triple. 
When the Frobenius-Schur indicator of 
all irreducible characters of $G$ is equal to $0$
or 1,
 their spherical functions  are
obtained as the coefficients of multi-partition version of power sum in product of
Schur functions and zonal polynomials. 
Here we have to explain multi-partition version of symmetric 
functions. 	
For a finite set $A$, we prepare the power sums $p_{r}(a)$ $(a \in A)$ and set
$\Lambda[A]={\mathbb C}\langle p_{r}(a) \mid a \in A, r \geq 1\rangle$.
We call $\Lambda[A]$ the multi-partition version of the ring of symmetric 
functions associated with $A$.
In the book \cite{mac},  the character theory of wreath products is introduced 
in terms of $\Lambda[G^{*}]$, where $G^*$ denotes the set of the irreducible representations 
of a finite group $G$. Recently  Ingram,  Jing and  Stitzinger study this type of symmetric functions  in \cite{xi}. 

One of our main result of this paper is to show that  a triplet $(SG_{2n},HG_{n},\Theta_{\xi,\pi})$ 
is a Gelfand triple for an arbitrary linear character $\Theta_{\xi,\pi}$ of $HG_{n}$.
Here all linear representations of $HG_{n}$ can be indexed by 
 linear representations $\xi$ of $G$ and $\pi$ of $H_n$ (see. Section 4.1). 
The irreducible representations of $SG_{2n}$ are indexed by a $|G^*|$-tuple of partitions
$(\lambda(\chi)\mid \chi \in G^*)$. 
The problem we should consider is:
 which partitions appear as irreducible component of  $\Theta_{\xi,\pi} \uparrow_{HG_{n}}^{SG_{2n}}$?
 To answer this problem, we need to show a twisted version Frobenius-Schur theorem:
 $$\nu_{2}^{\xi}(\chi)=\frac{1}{|G|}\sum_{g \in G}\overline{\xi(g)}\chi(g^2)=-1,0,1\ \ ({\rm Theorem}\ \ref{modFS}).$$
 We see that these three values 1, 0 and -1 determine the type of a partition $\lambda(\chi)$
 in a irreducible component  indexed by $(\lambda(\chi) \mid \chi \in G^*)$ of the induced representation.
After completing this problem, we consider the $\Theta_{\xi,\pi}$-spherical functions  of  $(SG_{2n},HG_{n},\Theta_{\xi,\pi})$.
We are greatly interested in $\Theta_{\xi,\pi}$-spherical
 function because they gives sometimes important examples of orthogonal functions. 
Actually our  $\Theta_{\xi,\pi}$-spherical functions appear as coefficients of multi-power sum of product including Schur functions, Schur's $Q$-functions and Jack polynomials at the parameter $\alpha=2$ and $1/2$.
 
 This paper is organized as follows.
We prepare the almost all notations used in this paper in Section 2.
In section 3, we consider a twisted version  of the Frobenius-Schur formula through
an analysis of $(SG_{2},HG_{1},\Theta_{\xi,\pi})$.
As conclusion of this section, we have a certain relation between
the number of irreducible representations of finite groups and that of conjugacy classes.
Section 4 is devoted to description of the irreducible decomposition of our triplet.
Simultaneously we discuss basis of the Hecke algebra.
 The main tool of this section is the relation obtained in Section 3.
In section 5, we consider  spherical functions. Two special cases  are 
computed explicitly.  
In section 6, we obtain the relation between $\Theta_{\xi,\pi}$-spherical functions
and multi-partition version of symmetric functions.
We construct an graded algebra which are the direct sum of Hecke algebra.
We make an isomorphism $CH_{\pi}$ between the graded Hecke algebra and 
a certain multi-partition version of the symmetric functions. 
We see that  the image of $\Theta_{\xi,\pi}$-spherical functions under $CH_{\pi}$ is
products of some symmetric functions.
In the last section, we apply the spherical harmonics of our triplet to the case of $G={\mathbb Z}/2{\mathbb Z}$ and see a their 
$\Theta_{\xi,\pi}$-spherical functions are essentially same as the characters of symmetric groups.

\section{Preliminaries}
Let $G$ be a finite group.
We denote by
$G_{*}$  (resp. $G^*$) the set of the  conjugacy classes 
(resp. a complete representatives of the irreducible representations) of $G$.
Let ${\mathcal L}(G)$ be the set of all linear characters of $G$.
For  $g \in G$, $C_{g}$ denotes the conjugacy class of $g$. Put $\zeta_{g}=\frac{|G|}{|C_{g}|}$.
For $\gamma \in G^{*}$, we denote by $\chi_{\gamma}$ the character of a representation $\gamma$.
We always identify $\{\chi_{\gamma}| \gamma \in G^{*}\}$ with $G^{*}$.
Fix $\eta \in {\mathcal L}(G)$.
Put $G_{**}=\{C_{g} \cup C_{g^{-1}}| g \in G \}$ and
 $G^{**}_{\eta}=\{\rho | \rho\in G_{*}\}/_{\sim_{\eta}}$, where,  
 $$\rho \sim_{\eta} \sigma \Leftrightarrow \chi_{\rho}={\chi_{\sigma}}  \ {\rm or} \ \chi_{\rho}=\overline{\chi_{\sigma}}\otimes
 \eta \ \ ( \rho,\sigma \in G^*).$$
By fixing a  representatives, we identify $G_{\eta}^{**}$ with a subset of $G^{*}$.
 Put $n_{**}=|G_{**}|$ and $n_{**}^{\eta}=|G^{**}_{\eta}|$. We remark $n^{**}=n_{**}^{1}$,
 where 1 is the trivial representation of $G$. 
If $C_{g}=C_{g^{-1}}$ (resp. $C_{g} \not =C_{g^{-1}}$),
 then we call $C_{g}$ and $R_g=C_g \cup C_{g^{-1}}$  a {\it  real}  (resp. {\it a complex}).
 \begin{example}
 In the case of $G=C_{6}=\langle a\mid a^6=1\rangle$ , we set $\xi_{m}:a \mapsto \exp\frac{2m \pi i}{6}
 \ (0 \leq m \leq 5)$.
 Then  we have
 $G_{**}=\{\{1 \},\{a,a^5 \},\{a^2,a^4 \},\{a^3\}\}$ and $G_{\xi_{1}}^{**}=\left\{\{\xi_0,\xi_{1}\},\{\xi_2,\xi_5\},\{\xi_3,\xi_4\}\right\}$.
 \end{example}
Denote by $\mathbb{C}G$ the group algebra of $G$.
We identify  $f=\sum_{x \in G}f(x)x \in \mathbb{C}G$ with the function $x \mapsto f(x).$ 
Under this identification, the multiplication in $\mathbb{C}G$ is given by the convolution:
$(f_{1}f_{2})(x)=\sum_{y \in G}f_{1}(y^{-1})f_{2}(yx)$.
%
%
Let $H$ be a subgroup of $G$. For $\xi \in {\mathcal L}(H)$,
let  $e_{\xi}=\frac{1}{|H|}\sum_{h \in H}\xi(h^{-1})h$.
A subalgebra
$ {\mathcal H}^{\xi}(G,H)=e_{\xi}{\mathbb C}Ge_{\xi}$ of 
$\mathbb{C}G$ is called a {\it Hecke algebra} of a triplet $(G,H,\xi)$.
The Hecke algebra ${\mathcal H}^{\xi}(G,H)$ can be identified with
$$\{ f:G \rightarrow \mathbb{C} \mid f(gh)=f(hg)=\overline{\xi(h)}f(g)\ \  (\forall g \in G, \forall h \in H)\}.$$
We denote by  $\xi \uparrow_{H}^{G} $
a representation of $G$
 induced from $\xi$.
If  $\xi\uparrow ^{G}_{H}$ is multiplicity-free as $G$-module, then we call 
the triplet $(G,H,\xi)$ a {\it{Gelfand triple}}. 
From Schur's lemma, 
`` $\xi\uparrow^{G}_{H}$ is multiplicity-free"
and
 ``$\mathcal{H}^{\xi}(G,H)$ is commutative"
are equivalent.
%
 In this paper, we only consider representations of finite groups.
Therefore  we  assume that each representation space $V$ of $G$ has $G$-invariant inner product
$\langle ,\rangle_{V}$. 
We assume that $(G,H,\xi)$ is a Gelfand triple. 
Let $\gamma \in G^*$ be an irreducible component of  $\xi\uparrow_{H}^{G}$.
Then there is a unique element of $\gamma$ satisfying
$$h v_0=\xi(h)v_0\ {\rm and}\ \langle v_0,v_0 \rangle_{\gamma} =1.$$
We define a {\it $\xi$-spherical function} by
$$\omega_{\gamma}(g)=\langle v_0,gv_0 \rangle_{W}.$$
 Also {\it $\xi$-spherical function} has the following expression;
 $$\omega_{\gamma}=\frac{|G|}{\dim{\gamma}}e_{\gamma}e_{\xi},$$
 where $e_{\gamma}=\frac{\dim \gamma}{|G|}\sum_{g \in G} \chi_{\gamma}(g^{-1})g$.
 
Let $\lambda$ be a partition and let $P_{n}$ be the set of all partitions of $n$.  
We denote by $\lambda'$ the transpose of $\lambda$.  
For $\lambda$, $m_{i}(\lambda)$ denotes the multiplicity of $i$ in $\lambda$.
Let $\ell(\lambda)$ be the length of $\lambda$.
 If all parts of $\lambda$ are even (resp. odd), then we call it a even (resp. odd) partition.
 We denote by $EP_{n}$ (resp. $OP_{n}$) the set of all even (resp. odd) partitions of $n$.  
Let $SP_{n}$ be the set of all strict 
partitions of $n$. Set $P =\cup_{n \geq 0} P_{n}$, $SP =\cup_{n \geq 0} SP_{n}$,
$OP =\cup_{n \geq 0} OP_{n}$ and 
$EP =\cup_{n \geq 0} EP_{n}$.  We denote by $D(\mu)$ the doubling of $\mu \in SP$ (cf. \cite{stem}).
For example, $D(421)=(5441)$.
Let $\underline{\lambda}=(\lambda(x)|x \in X)$ be a $|X|$-tuple of partitions 
for a set $X$. We define the weight of $\underline{\lambda}$ by $|\underline{\lambda}|=\sum_{x \in X}|\lambda(x)|$.
For two partitions $\lambda$ and $\mu$, we define a partition $\lambda \cup \mu$ by $m_{i}(\lambda \cup \mu)
=m_{i}(\lambda)+m_{i}(\mu)$ for any $i$. 
For $\underline{\lambda}=(\lambda(x)|x \in X)$  and $\underline{\mu}=(\mu(x)|x \in X)$,
set $\underline{\lambda} \cup \underline{\mu}=(\lambda(x)\cup \mu(x)|x \in X)$. 
For $(\rho(x)\mid x \in X)$,
 we define $\hat{\underline{\rho}}$ to be a partition $\bigcup_{x \in X}\rho(x)$.

  The hyperoctahedral group $H_{n}$ is a subgroup of $S_{2n}$ defined by
 $$H_{n}\cong \langle (2i-1,2i), (2j-1,2j+1)(2j,2j+2) \mid 1 \leq i \leq n,1 \leq j \leq n-1\rangle.$$
 $H_{n}$ is the centralizer of an element $(12)(34)\cdots(2n-1,2n) \in S_{2n}$.
 Here we remark  $\langle(2j-1,2j+1)(2j,2j+2) \mid 1 \leq i \leq n,1 \leq j \leq n-1\rangle \cong S_{n}=
 \langle(i,i+1)|1 \leq i \leq n\!-\!1 \rangle.$
 We define an isomorphism $\phi_{n}$ from $S_{n}$ onto $\langle(2j-1,2j+1)(2j,2j+2) | 1 \leq i \leq n,1 \leq j \leq n-1\rangle
 \subset H_{n}$ by
 $$\phi_{n}((i,i+1)) = (2i-1,2i+1)(2i,2i+2).$$
We consider the wreath product  $SG_{n}=G \wr S_{n}=\{(g_{1},g_{2},\cdots,g_{n}:\sigma)\mid g_{i} \in G, \sigma \in S_{n}\}$
of a finite group $G$ with a symmetric group $S_{n}$.
  We define a subgroup $HG_{n}$ of $SG_{2n}$  by
 $$HG_{n}=\{(g_{1},g_{1},g_{2},g_{2},\cdots,g_{n},g_{n}:\sigma)\mid \sigma \in H_{n}\}.$$
Put  $\underline{\lambda}=(\lambda(\chi)\mid \chi \in G^*)$ and $|\underline{\lambda}|=n$.
 We denote by $S(\underline{\lambda})$ the irreducible representation of $SG_{n}$ indexed
 by $\underline{\lambda}$ which is constructed by the following way (cf. 
 \cite{JK}). 
  Let $V_{\chi}$ be an irreducible representation of $G$  affording a character $\chi$ and 
 $S^{\lambda}$ be an irreducible representation of $S_{n}$ indexed by a partition $\lambda$.
We can define an action of $SG_{n}$ on $S^{\lambda}(\chi)=V_{\chi}^{\otimes n} \otimes S^{\lambda}$ by
 $$(g_{1},\cdots,g_{n}:\sigma)v_{1}\otimes \cdots \otimes v_{n} \otimes w=g_{1}v_{\sigma^{-1}(1)}
 \otimes \cdots \otimes g_{n}v_{\sigma^{-1}(n)} \otimes \sigma w,$$
 where $v_{i} \in V_{\chi}$ and $w \in S^{\lambda}$.
 Set $n_{\chi}=|\lambda(\chi)|$ and $\underline{n}=(n_{\chi}\mid \chi\in G^*)$.
 We have $S(\underline{\lambda}) \cong \bigotimes_{\chi \in G^*} S^{\lambda(\chi)}(\chi)
 \uparrow_{SG(\underline{n})}^{SG_{n}}$, where $SG(\underline{n}) =\prod_{\chi \in G^*} SG_{n_{\chi}}$.
 It is a fact that $\{S(\underline{\lambda}) \mid |\underline{\lambda}|=n\}$ gives a complete representatives of 
 the irreducible representations of $SG_{n}$.

 In  \cite{mzonal}, the author shows the following theorem.
 \begin{theorem}\label{mzprev}\cite{mzonal}
\begin{enumerate}
\item
 For any $x \in SG_{2n}$, we have $HG_{n}xHG_{n}=HG_{n}x^{-1}HG_{n}$.
 \item
  The double cosets $HG_{n} \backslash SG_{2n}/HG_{n}$ are indexed by
 $|G_{**}|$-tuple of partitions $\underline{\rho}=(\rho(R)|R \in G_{**})$ such that 
 $\sum_{R \in G_{**}}|\rho_{R}|=n$.
\end{enumerate}
\end{theorem}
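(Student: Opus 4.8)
The plan is to attach to each $x\in SG_{2n}$ a $G$-coloured refinement of the classical coset-type and to prove that it is a complete invariant for the double cosets; both assertions then follow. First I would observe that, exactly as $H_n$ is the centralizer of $\tau=(12)(34)\cdots(2n-1,2n)$ in $S_{2n}$, the subgroup $HG_n$ is the centralizer in $SG_{2n}$ of the element $t=(1,1,\dots,1:\tau)$. Indeed, using the wreath-product multiplication $(g:\sigma)(k:\pi)=((g_ik_{\sigma^{-1}(i)})_i:\sigma\pi)$, a direct check shows that $(g:\sigma)$ commutes with $t$ precisely when $\sigma\in H_n$ and $g_{2i-1}=g_{2i}$ for all $i$, which is the defining condition of $HG_n$. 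Consequently $HG_nxHG_n\mapsto(HG_n\text{-orbit of }xtx^{-1})$ is a bijection from $HG_n\backslash SG_{2n}/HG_n$ onto the set of $HG_n$-orbits, under conjugation, inside the conjugacy class of $t$, and it is these orbits that I would parametrize.

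Next I would make the invariant explicit. For $x=(g:\sigma)$ set $\theta=\sigma\tau\sigma^{-1}$; a short computation gives $xtx^{-1}=((a_i)_i:\theta)$ with $a_i=g_ig_{\theta(i)}^{-1}$, so $xtx^{-1}$ is a fixed-point-free involution whose two-cycles $\{i,\theta(i)\}$ carry the $G$-labels $a_i$, subject to $a_{\theta(i)}=a_i^{-1}$. Superimposing the two perfect matchings $\tau$ and $\theta$ on $\{1,\dots,2n\}$ yields a disjoint union of even cycles; a cycle of length $2r$ contributes a part $r$, and reading the labels $a_i$ around that cycle produces an element of $G$ that is well defined up to cyclic change of base point (hence up to conjugacy) and up to reversal of orientation (hence up to $g\mapsto g^{-1}$). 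Each cycle therefore carries a well-defined class $R\in G_{**}$, and collecting the half-lengths of the cycles according to their label gives a tuple $\underline{\rho}(x)=(\rho(R)\mid R\in G_{**})$ with $\sum_{R}|\rho(R)|=n$. Part (2) amounts to showing that $\underline{\rho}$ induces a bijection onto such tuples.

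I would prove this in three steps. \emph{Invariance:} conjugating $xtx^{-1}$ by an element $(h_1,h_1,\dots,h_n,h_n:\pi)\in HG_n$ permutes the vertices through $\pi\in H_n$ (permuting cycles and preserving lengths) and conjugates the labels $a_i$, so the $G_{**}$-class of every cycle is preserved; moreover a reflection of a cycle fixing the matching $\tau$ lies in $H_n\subset HG_n$ and reverses the cycle-product, which is exactly why $g$ and $g^{-1}$ fall in the same orbit. \emph{Surjectivity:} for a prescribed $\underline{\rho}$ I build a representative cycle by cycle, taking for a part $r$ of $\rho(R)$ a standard $2r$-cycle with trivial labels except on one edge, where I place a chosen $g$ with $C_g\cup C_{g^{-1}}=R$. \emph{Injectivity:} if $x,y$ have the same $\underline{\rho}$, there is a length- and $G_{**}$-label-preserving bijection of their superimposed graphs respecting $\tau$; its vertex part is realized by an element of $H_n$, and the residual discrepancy of the cycle-products (each lying in the same class $R$) is absorbed by solving, cycle by cycle, for the doubled components $h_i$ of a conjugator in $HG_n$. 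This last step is the main obstacle: one must verify that the freedom in the $h_i$, constrained by the doubling $g_{2i-1}=g_{2i}$, is exactly enough to match two labelings whose cycle-products agree only up to conjugacy and inversion — which is precisely the reason the correct index set is $G_{**}$ rather than the set of ordinary conjugacy classes.

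Finally, part (1) follows from the same invariant. Since $(HG_nxHG_n)^{-1}=HG_nx^{-1}HG_n$ holds in any group, it suffices to check $\underline{\rho}(x)=\underline{\rho}(x^{-1})$. The involution attached to $x^{-1}$ is $x^{-1}tx$, with underlying matching $\sigma^{-1}\tau\sigma$; conjugating the superimposed pair $(\tau,\sigma^{-1}\tau\sigma)$ by $\sigma$ transforms it into $(\sigma\tau\sigma^{-1},\tau)$, which is the superimposed pair attached to $x$ with the two matchings interchanged. As the cycle-lengths and the $G_{**}$-labels depend symmetrically on the two matchings and are unchanged under conjugation (which merely conjugates each cycle-product globally in $G$), I obtain $\underline{\rho}(x^{-1})=\underline{\rho}(x)$, and hence $HG_nxHG_n=HG_nx^{-1}HG_n$.
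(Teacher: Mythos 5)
Your proposal is correct and follows essentially the route of the cited source: this theorem is not proved in the present paper but imported from \cite{mzonal}, where the parametrization likewise rests on identifying $HG_{n}$ as the centralizer of $(1,\dots,1:\tau)$ with $\tau=(12)(34)\cdots(2n-1,2n)$ and reading off a $G_{**}$-labelled coset type from the superimposed matchings. Your identification of the injectivity step (solving for the doubled components $h_{i}$ cycle by cycle, which is exactly where the index set $G_{**}$ rather than $G_{*}$ is forced) as the crux is accurate, and the representatives you construct for surjectivity coincide with the $x(\underline{\rho})$ of Theorem \ref{doublecoset}.
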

We define some notations of special  elements of the symmetric groups and wreath products. 
After the definition, we see examples of these symbols.
\begin{definition}
\begin{enumerate}
\item
For $x_{i} \in S_{n_{i}}$ $(1 \leq i \leq m)$, We define 
a parmutation $[x_{1},x_{2},\cdots ,x_{m}]_{n_{1},\cdots,n_{m}}$ to be a 
natural embedding of $x_{1} \times \cdots \times x_{m}$ in $S_{n_1+\cdots+n_{m}}$.
\item
For a partition $\rho=(\rho_{1},\cdots,\rho_{\ell(\rho)}) \in P_{n}$, we define a parmutation 
$[\rho]$
by $[\rho]=[\rho_{1},\cdots,\rho_{\ell(\rho)}]_{\rho_{1},\cdots,\rho_{\ell(\rho)}}$.
\item
In a similar way of 1,
for $X_{i} \in SG_{n_{i}}$ $(1 \leq i \leq m)$,  we define an element $[X_{1},\cdots,X_{m}]_{n_{1},\cdots,n_{m}}$
to be a natural embedding of $X_{1} \times \cdots \times X_{m} \in SG_{n_{1}}\times \cdots 
\times SG_{n_{m}}$ in $SG_{n_{1}+\cdots +n_{m}}$.
\end{enumerate}
\end{definition}
\begin{example}
\begin{enumerate}
\item
If $x=(12)\in S_{3}$, $y=(13)(24) \in S_{4}$ and $z=(12) \in S_{2}$,  then
$[x,y,z]_{3,4,2}=(12)(46)(57)(89) \in S_{9}$.
\item
For a partition $\rho=(4,2,2)$, we have
$[(4,2,2)]=(1234)(56)(78) \in S_{8}$.
\item
If $X=(g_{1},g_{2},g_{3}:(12))\in SG_{3}$, $Y=(g_{4},g_{5},g_{6},g_{7}:(13)(24))
 \in SG_{4}$ and $Z=(g_{8},g_{9}:(12)) \in SG_{2}$,  then
$[X,Y,Z]_{3,4,2}=(g_{1},\cdots,g_{9}:(12)(46)(57)(89) )\in S_{9}$.
\end{enumerate}
\end{example}
Under these notations,  we can choose the complete representative of $HG_{n} \backslash SG_{2n}/HG_{n}$ 
as follows.
 \begin{theorem}\label{doublecoset}\cite{mzonal}
Set $G_{**}=\{R_{1},R_{2},\cdots,R_{s}\}$ and fix an element $g_{R_{i}}$ of $R_{i}$ $(1 \leq i \leq s)$.
Let  $\rho(R_{i})=(\rho_{1}(R_{i}),\rho_{2}(R_{i}),\cdots)$ be a partition of $n_{i}$. Put
$$x(R_{i})=
\begin{cases}
(\underbrace{1,\cdots 1,g_{R_{i}}}_{2\rho_{1}(R_{i})},
\underbrace{1,\cdots 1,g_{R_{i}}}_{2\rho_{2}(R_{i})},
\cdots,
\underbrace{1,\cdots 1,g_{R_{i}}}_{2\rho_{\ell(\rho(R_{i}))}(R_{i})},
:[2\rho(R_{i})]
), & \rho(R_{i})\not= \emptyset\\
\emptyset,& \rho(R_{i})= \emptyset.
\end{cases}
$$
If $n_{1}+\cdots n_{s}=n$, then each complete representative of $HG_{n} \backslash SG_{2n}/HG_{n}$  corresponding to
$\underline{\rho}=(\rho(R)|R \in G_{**})$
can be chosen by the following form
$$x(\underline{\rho})=[x(R_{1}),\cdots,x(R_{s})]_{2n_{1},\cdots,2n_{s}}.$$
\end{theorem}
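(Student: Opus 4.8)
The plan is to attach to each double coset a combinatorial invariant --- a ``coset type'' valued in $|G_{**}|$-tuples of partitions --- and to verify that the explicit element $x(\underline{\rho})$ has coset type exactly $\underline{\rho}$. Since Theorem~\ref{mzprev}(2) already furnishes a bijection between $HG_{n}\backslash SG_{2n}/HG_{n}$ and such tuples, matching the computed coset type against the index will identify the family $\{x(\underline{\rho})\}$ as a complete and irredundant system of representatives.

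First I would realize $HG_{n}$ as a centralizer. Writing $\tau=(12)(34)\cdots(2n-1,2n)$ and $T=(\underbrace{1,\dots,1}_{2n}:\tau)\in SG_{2n}$, a direct application of the wreath-product multiplication shows that $(g_{1},\dots,g_{2n}:\pi)$ commutes with $T$ precisely when $\pi\in H_{n}$ and $g_{i}=g_{\tau(i)}$ for all $i$, that is $g_{2i-1}=g_{2i}$; hence $HG_{n}=C_{SG_{2n}}(T)$. Then $x\mapsto xTx^{-1}$ induces a bijection of $SG_{2n}/HG_{n}$ with the conjugacy class of $T$, under which left multiplication by $HG_{n}$ becomes conjugation, so that $HG_{n}\backslash SG_{2n}/HG_{n}$ is identified with the $HG_{n}$-conjugacy orbits of the elements $T'=xTx^{-1}$. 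A short computation gives $T'=(k_{1},\dots,k_{2n}:\sigma)$ with $\sigma=\pi\tau\pi^{-1}$ and $k_{i}=g_{i}g_{\sigma(i)}^{-1}$, so that $T'$ is a fixed-point-free involution carrying group data subject to $k_{\sigma(i)}=k_{i}^{-1}$.

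Next I would extract the invariant from the pair $(T,T')$. The two fixed-point-free involutions $\tau,\sigma$ generate a group whose orbits on $\{1,\dots,2n\}$ are cycles of even length $2\ell$ alternating between $\tau$- and $\sigma$-edges, and the multiset of half-lengths $\ell$ is the classical coset type. Traversing such a cycle and accumulating the data $k_{i}$ met along its $\sigma$-edges (the $\tau$-edges of $T$ carry trivial data) produces an element of $G$ that is well defined up to conjugacy (change of base point, absorbed by the pairwise-equal $G$-part of $HG_{n}$) and up to inversion (reversal of orientation, which by Theorem~\ref{mzprev}(1) leaves the double coset unchanged); it therefore determines a class $R\in G_{**}$. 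Recording each half-length $\ell$ under its label $R$ yields a tuple $(\rho(R)\mid R\in G_{**})$ constant on the double coset, which by Theorem~\ref{mzprev}(2) is its index. It thus remains only to compute this tuple for $x(\underline{\rho})$.

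That computation I would carry out blockwise. Because each $2n_{i}$ is even, both $\tau$ and the permutation part $[2\rho(R_{1}),\dots,2\rho(R_{s})]$ preserve the partition of $\{1,\dots,2n\}$ into the consecutive blocks of sizes $2n_{1},\dots,2n_{s}$, so the orbit structure splits over the blocks and it suffices to treat one $x(R_{i})$, and within it one part. For a single part $\rho_{j}(R_{i})=\ell$ the permutation is the $2\ell$-cycle $(1\,2\,\cdots\,2\ell)$ with $g_{R_{i}}$ placed in the last coordinate; computing $\sigma=\pi\tau\pi^{-1}$ shows $\tau$ and $\sigma$ generate a single orbit of size $2\ell$, and accumulating the $k_{i}$ along its $\sigma$-edges gives the product $g_{R_{i}}$, so the cycle has half-length $\ell$ and group element $g_{R_{i}}\in R_{i}$. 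Summing over parts and blocks, the coset type of $x(\underline{\rho})$ is exactly $\underline{\rho}$, and with the bijection of Theorem~\ref{mzprev}(2) this proves the claim. The main obstacle is setting up the cycle-attached group element correctly and confirming that its conjugacy/inversion ambiguity pins it to a single class $R\in G_{**}$; once that invariant is in place the blockwise calculation is routine.
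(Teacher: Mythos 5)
The paper gives no proof of this statement---it is quoted verbatim from \cite{mzonal}---so there is nothing internal to compare against; your argument is the standard one for such double-coset parametrizations (realize $HG_{n}$ as the centralizer of $T=(1,\dots,1:\tau)$, identify $HG_{n}\backslash SG_{2n}/HG_{n}$ with $HG_{n}$-orbits on the class of $T$, and read off the decorated coset type from the alternating cycles of the two matchings), which is precisely the route taken in the cited reference, and it is correct. Two small points you should tighten rather than assert: the invariance of the cycle products under $HG_{n}$-conjugation (that conjugating $T'$ by $(h_1,h_1,\dots,h_n,h_n:\pi)$ only conjugates each accumulated product inside $G$) deserves the explicit one-line computation, and the inversion ambiguity is intrinsic to reversing the traversal of a cycle---it is a reason \emph{why} Theorem \ref{mzprev}(1) holds, not a consequence of it---so citing \ref{mzprev}(1) there is circular in spirit even though the conclusion (the invariant lands in $G_{**}$) is right. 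With those repairs the counting argument against Theorem \ref{mzprev}(2) does yield completeness and irredundancy as you say.
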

If $G$ is the trivial,  then the set $\{[2\rho]\mid \rho \in P_{n}\}$ is a complete representatives 
of $H_{n} \backslash S_{2n}/H_{n}$ (cf. \cite{james}). 
\begin{example}
We consider $G=C_{4}=\{e,a,a^2,a^3\}$ and $n=4$.
Then we have $G_{**}=\{R_{1}=\{0\},R_{2}=\{1,3\},R_{3}=\{2\}\}$.
We consider a representative of  $HG_{4} \backslash SG_{8}/HG_{4}$ corresponding to $(\emptyset,(1),(2,1))$.
We have $x(R_{1})=\emptyset$, $x(R_{2})=(e,a:(12))$ and $x(R_{3})=(e,e,e,a^2,e,a^2:(1234)(56))$ .
Therefore we have
$$[\emptyset,(e,a:(12)),(e,e,e,a^2,e,a^2:(1234)(56))]_{0,2,6}
=(\underbrace{e,a}_{2},\underbrace{e,e,e,a^2,e,a^2}_{4,2}
:(12)(3456)(78)).$$
\end{example}

\section{Twisted Frobenius-Schur's Theorem}
Let $G$ be a finite group. 
The following identity is known as the Frobenius-Schur's theorem: 
$$\frac{1}{|G|}\sum_{g\in G} \chi(g^2)=1,-1\ {\rm or} \ 0.$$
Fix   $\xi \in {\mathcal L}(G)$.
In this section, we consider  the following summation:
$$\nu_{2}^{\xi}(\chi)=\frac{1}{|G|}\sum_{g\in G} \overline{\xi (g)}\chi(g^2).$$

We consider a pair $(SG_{2},HG_{1})$. This pair is a Gelfand pair (cf. \cite{mzonal}). 
Fix a complete representatives   $\{ g_{R}|R \in G_{**}\}$ of $G_{**}$.
A complete representatives of their double coset can be chosen by
$$\{(1,g_{R};1)|R \in G_{**}\},$$
where we remark  $(1,g_{R};1)=(1,g_{R};(12))(1,1;(12))$ (cf. Theorem \ref{doublecoset}).
We remark $HG_{1}\cong G \times S_{2}$, therefore any 
element of ${\mathcal L}(HG_{1})$ can be written as
$$(\xi\otimes \varepsilon)(g,g;\sigma)=\xi(g)\varepsilon(\sigma),$$
where $\varepsilon$ is the trivial or sign representation of  $S_2$.

In unitary representation theory of finite groups,
the following proposition is well known.
\begin{proposition}\label{Kondo}
\begin{enumerate}
\item
Let $R_{1}$ and $R_{2}$ be unitary representations of $G$.
If $R_{1}\sim R_{2}$, then there exists a unitary matrix $S$ such that
$SR_{1}S^{-1}=R_{2}$.
\item
Let  $R$ be a unitary representation of $G$.
A matrix $U$ satisfy 
$$UR(x)U^{-1}=\overline{R}(x),$$
if and only if
$$\begin{cases}
U={}^{t}U & (\text{R\ is\ a\ type I}\ ),\\
U=-{}^{t}U & (\text{R\ is\ a\  type II}\ ).
\end{cases}$$
\end{enumerate}
\end{proposition}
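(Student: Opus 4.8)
The plan is to handle the two parts separately; both are classical facts about unitary representations. Part (1) I would prove by polar decomposition and it needs no irreducibility, whereas part (2) rests on Schur's lemma applied to the transpose of an intertwiner, so there I take $R$ to be irreducible, as the words ``type I'' and ``type II'' presuppose.

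For part (1), fix any invertible $T$ with $TR_{1}(x)=R_{2}(x)T$ for all $x\in G$. Taking conjugate transposes and using unitarity in the form $\overline{{}^tR_{i}(x)}=R_{i}(x^{-1})$, one gets $\overline{{}^tT}\,R_{2}(x)=R_{1}(x)\,\overline{{}^tT}$, and combining the two relations shows that the positive definite Hermitian operator $\overline{{}^tT}\,T$ commutes with every $R_{1}(x)$. Hence so does its square root $P=(\overline{{}^tT}\,T)^{1/2}$, being a spectral function of it, and I would set $S=TP^{-1}$. A direct check gives $\overline{{}^tS}\,S=I$, so $S$ is unitary, and $SR_{1}(x)S^{-1}=TR_{1}(x)T^{-1}=R_{2}(x)$, as required. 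The only point needing care is the inheritance of the commutation relation by $P$, which is routine.

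For part (2), write the hypothesis as $UR(x)=\overline{R}(x)U$. Transposing and using the unitarity identities ${}^tR(x)=\overline{R}(x^{-1})$ and ${}^t\overline{R}(x)=R(x^{-1})$, one finds, after replacing $x^{-1}$ by $x$, that ${}^tU$ satisfies the very same relation ${}^tU\,R(x)=\overline{R}(x)\,{}^tU$. Therefore $U^{-1}\,{}^tU$ commutes with all $R(x)$, and since $R$ is irreducible Schur's lemma gives $U^{-1}\,{}^tU=cI$, i.e. ${}^tU=cU$. Transposing once more yields $U=c\,{}^tU=c^{2}U$, so $c^{2}=1$ and $c=\pm1$; thus every intertwiner $U$ is symmetric or skew-symmetric.

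It remains to match the two signs with the Frobenius-Schur type, and this bookkeeping --- rather than any analytic difficulty --- is the main obstacle. I would observe that a symmetric $U$ furnishes a nondegenerate $G$-invariant symmetric bilinear form on the representation space, equivalently a real structure, which is exactly the case $\frac{1}{|G|}\sum_{g\in G}\chi(g^{2})=1$ (type I); a skew-symmetric $U$ instead furnishes an invariant alternating form, a quaternionic structure, corresponding to $\frac{1}{|G|}\sum_{g\in G}\chi(g^{2})=-1$ (type II). Checking that these two cases are mutually exclusive and exhaust all possibilities once $R\sim\overline{R}$ --- the remaining value $0$ being precisely the complex case $R\not\sim\overline{R}$, where no such $U$ exists --- completes the identification and hence the proposition.
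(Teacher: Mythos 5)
Your proof is essentially correct, but note that the paper itself offers no argument for this proposition at all: it is introduced with the words ``In unitary representation theory of finite groups, the following proposition is well known'' and then used as a black box (only part (1) is actually invoked, in deriving $R(x)=P_{\xi,R}D_{\xi}(x)\overline{R(x)}P_{\xi,R}^{*}$). So there is nothing in the paper to compare against except the citation of a classical fact; what you have written is the standard textbook proof, and it fills the gap correctly. Two remarks on your write-up. First, you are right that part (2) as printed is imprecise: irreducibility must be assumed for ``type I/II'' to make sense and for Schur's lemma to apply, and the ``if and only if'' can only be meant in the direction you prove (an arbitrary symmetric matrix certainly need not intertwine $R$ with $\overline{R}$); your reading --- every intertwiner is either symmetric or antisymmetric, all of the same kind since they are proportional by Schur, and the kind is determined by the type --- is the intended content. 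Second, your final identification of the sign of $c$ with the Frobenius--Schur indicator is the one place where you lean on an external classical result (the equivalence of ``invariant symmetric/alternating form'' with $\frac{1}{|G|}\sum_{g}\chi(g^{2})=\pm1$). That is legitimate here, since ``type I/II'' presupposes exactly this classical dichotomy and the paper's own contribution is only the twisted version; but if you wanted a self-contained account you would need to prove that equivalence too, e.g.\ by the trace computation that the paper carries out in its twisted setting immediately after this proposition (specialized to $\xi=1$). The polar-decomposition argument for part (1) is complete as written.
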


%
\begin{proposition}
Let $\chi$ be an irreducible character of $G$.
Then we have
\begin{align*}
\frac{1}{|G|}\sum_{x \in G} \overline{\xi(x)} \chi (x^2)=
\begin{cases}
\alpha & (\chi = \overline{\chi} \otimes {\xi})\\
0 & (\chi \not= \overline{\chi} \otimes {\xi}),
\end{cases}
\end{align*}
where $|\alpha|=1$.
\end{proposition}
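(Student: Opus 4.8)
The plan is to compute $\nu_2^\xi(\chi)$ through Schur's orthogonality relations for matrix coefficients, reducing everything to a statement about a single intertwining operator. Fix a unitary representation $R$ of $G$ affording $\chi$, of degree $d=\dim\chi$. Writing $\chi(x^2)=\mathrm{tr}(R(x)^2)=\sum_{i,j}R(x)_{ij}R(x)_{ji}$ and recalling that $\xi$ is a linear (hence unitary) character, I would read $\overline{\xi(x)}R(x)_{ij}$ as the $(i,j)$ matrix coefficient of the irreducible representation $\overline{\xi}\otimes R$, and $R(x)_{ji}=\overline{\overline{R}(x)_{ji}}$ as the conjugate of a matrix coefficient of the conjugate representation $\overline{R}$ (which affords $\overline{\chi}$ and is again irreducible). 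Thus $\nu_2^\xi(\chi)=\sum_{i,j}\frac1{|G|}\sum_{x\in G}(\overline{\xi}\otimes R)(x)_{ij}\,\overline{\overline{R}(x)_{ji}}$, which is exactly the shape to which Schur orthogonality applies.

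First I would dispose of the vanishing case. By Schur orthogonality each inner average is zero unless the two irreducibles $\overline{\xi}\otimes R$ and $\overline{R}$ are isomorphic; as characters this isomorphism reads $\overline{\xi}\chi=\overline{\chi}$, i.e.\ $\chi=\overline{\chi}\otimes\xi$. Hence $\nu_2^\xi(\chi)=0$ whenever $\chi\neq\overline{\chi}\otimes\xi$, which is the second case of the statement.

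Next, assume $\chi=\overline{\chi}\otimes\xi$, so that $\overline{\xi}\otimes R\cong\overline{R}$. By Proposition \ref{Kondo}(1) together with Schur's lemma there is a unitary $U$, unique up to a scalar, with $U\,(\overline{\xi}\otimes R)(x)\,U^{-1}=\overline{R}(x)$ for all $x$, that is $\overline{\xi(x)}\,U R(x)U^{-1}=\overline{R(x)}$. The intertwiner form of Schur orthogonality gives $\frac1{|G|}\sum_{x}(\overline{\xi}\otimes R)(x)_{ij}\,\overline{\overline{R}(x)_{kl}}=\frac1d\,\overline{U_{ki}}\,U_{lj}$, and summing over the relevant indices $k=j$, $l=i$ yields $\alpha=\nu_2^\xi(\chi)=\frac1d\,\mathrm{tr}(U\overline{U})$.

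It remains to prove $|\alpha|=1$, and this is the step I expect to be the crux. The key manoeuvre is to eliminate the twist: conjugating the intertwining relation entrywise gives $\xi(x)\,\overline{U}\,\overline{R(x)}\,\overline{U}^{-1}=R(x)$, and substituting this back into the original relation makes the scalar factors $\overline{\xi(x)}$ and $\xi(x)$ cancel, leaving $(\overline{U}U)R(x)=R(x)(\overline{U}U)$ for all $x$. Since $R$ is irreducible, Schur's lemma forces $\overline{U}U=cI$ with $c\in\mathbb{C}$; because $U$, hence $\overline{U}$, is unitary we get $|c|=1$, and taking conjugates gives $U\overline{U}=\overline{c}\,I$, so $\mathrm{tr}(U\overline{U})=\overline{c}\,d$ and therefore $\alpha=\overline{c}$ with $|\alpha|=1$. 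The main obstacle is precisely the presence of the linear twist $\xi$ inside the intertwining relation: one must arrange the conjugation so that the $\xi$-factors cancel and Schur's lemma can be applied to the untwisted $R$. I would finally remark that the sharper dichotomy $\alpha\in\{+1,-1\}$ — obtained either by deciding whether $U$ is symmetric or antisymmetric via Proposition \ref{Kondo}(2), or equivalently by the short computation $\overline{c}U=(U\overline{U})U=U(\overline{U}U)=cU$ forcing $c=\overline{c}$ — is the content of the twisted Frobenius--Schur Theorem \ref{modFS}.
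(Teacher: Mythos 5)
Your proof is correct and follows essentially the same route as the paper's: both reduce $\nu_2^\xi(\chi)$ to Schur orthogonality of matrix coefficients, obtain vanishing when $\overline{\xi}\otimes R\not\cong\overline{R}$, and otherwise introduce the unitary intertwiner $U$ (the paper's $P_{\xi,R}$), show $\overline{U}U$ is a unitary scalar by the conjugate-and-substitute trick, and identify the sum with $\frac{1}{d}\mathrm{tr}(U\overline{U})=\alpha$. Your closing observation that $(U\overline{U})U=U(\overline{U}U)$ already forces $\alpha=\pm1$ is a nice shortcut; the paper instead deduces $\alpha=\pm1$ later from the integrality of the intertwining number in Proposition \ref{mpf}.
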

\begin{proof}
First we assume 
$$\chi = \overline{\chi} \otimes {\xi}\ \  (\Leftrightarrow \overline{\chi}= \chi\otimes \overline{\xi}).$$
Let $R(x)=(r_{ij}(x))_{1\leq i,j \leq d}$ be a  unitary matrix representation  affording $\chi$. 
Then $D_{\xi}(x){}^t\!{R(x)}^{-1}=D_{\xi}(x)\overline{R(x)}$, where $D_{\xi}(x)={\rm diag}(\xi(x),\cdots,\xi(x))$ , is a unitary matrix representation affording $\xi \otimes \overline{\chi}$.
Under our assumption, Proposition \ref{Kondo}-(1) gives us
\begin{align}\label{1}
R(x)=P_{\xi,R}D_{\xi}(x)\overline{R(x)}P_{\xi,R}^{*},
\end{align}
where $P_{\xi,R}=(p_{ij})$ is a unitary matrix.
Taking the complex conjugate of (\ref{1}), we have
\begin{align}\label{2}
\overline{R(x)}=\overline{P_{\xi,R}}\overline{D_{\xi}(x)}R(x){}\overline{P_{\xi,R}^*}.\
\end{align}
From (\ref{1}) and (\ref{2}) we have
$$R(x)={P_{\xi,R}}\overline{P_{\xi,R}}R(x)\overline{P_{\xi,R}^*}P_{\xi,R}^{*}.$$
Since  $P_{\xi,R}\overline{P_{\xi,R}}$ is a scalar unitary matrix, we can put
$${P_{\xi,R}}\overline{P_{\xi,R}}=\alpha E,$$
where $|\alpha|=1$ and $E={\rm diag}(1,\cdots,1)$. 
We compute
\begin{align*}
\frac{1}{|G|}\sum_{x \in G} \overline{\xi(x)} \chi (x^2)&=\frac{1}{|G|}\sum_{x \in G}
\overline{\xi(x)} {\rm tr} R(x^2)
=\frac{1}{|G|}\sum_{x \in G}
\overline{\xi(x)} {\rm tr} R(x)R(x)\\
&=\frac{1}{|G|}\sum_{x \in G}
\overline{\xi(x)}{\rm tr} R(x)R(x^{-1})^*
=\frac{1}{|G|}\sum_{x \in G}
\left(\sum_{ij}\overline{\xi(x)}r_{ij}(x)\overline{r_{ij}(x^{-1})}\right)\\
&=\frac{1}{|G|}\sum_{ij} 
\left(\sum_{x \in G}\overline{\xi(x)}r_{ij}(x)\overline{r_{ij}(x^{-1})}\right)
=\frac{1}{|G|}\sum_{ij} 
\left(\sum_{x \in G}(\sum_{k,\ell}p_{ik}\overline{r_{k \ell }(x)}\overline{p_{j \ell}})\overline{r_{ij}(x^{-1})}\right)\\
&=\sum_{k,\ell}\sum_{ij} p_{ik}\overline{p_{j \ell}}
\left(\frac{1}{|G|}\sum_{x \in G}\overline{r_{k\ell}(x)}\overline{r_{ij}(x^{-1})}\right)
=\frac{{\rm tr}{P_{\xi,R}}\overline{P_{\xi,R}}}{d}=\alpha.
\end{align*}
We remark that $\alpha$ does not depend on choices of $R$.\\
Second
we assume 
$$\chi \not=\overline{\chi}\otimes {\xi}.$$
We compute
\begin{align*}
\frac{1}{|G|}\sum_{x \in G} \overline{\xi(x)} \chi (x^2)&=\frac{1}{|G|}\sum_{x \in G}
\overline{\xi(x)} {\rm tr} R(x^2)=\frac{1}{|G|}\sum_{x \in G}
\overline{\xi(x)} {\rm tr} R(x)R(x)\\
&=\frac{1}{|G|}\sum_{x \in G}
\overline{\xi(x)} {\rm tr} R(x)R(x^{-1})^*
=\frac{1}{|G|}\sum_{x \in G}
\left(\sum_{ij} \overline{\xi(x)}r_{ij}(x)\overline{r_{ij}(x^{-1})}\right)\\
&=\frac{1}{|G|}\sum_{ij} 
\left(\sum_{x \in G}
r_{ij}(x)\overline{r_{ij}(x^{-1})}{\xi(x^{-1})}\right)=0.
\end{align*}
\end{proof}
\begin{proposition}\label{mpf}
$(SG_{2},HG_{1},\xi\otimes \varepsilon)$ is a  Gelfand triple.\\
\end{proposition}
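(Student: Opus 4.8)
The plan is to recast the Gelfand property as multiplicity-freeness of the induced module $W=(\xi\otimes\varepsilon)\uparrow_{HG_1}^{SG_2}$ and to obtain this by restricting $W$ to the base group rather than by analyzing the Hecke algebra directly. The tempting route is the Gelfand trick: inversion is an anti-automorphism fixing every double coset by Theorem \ref{mzprev}(1), so one wants $f\mapsto f(\cdot^{-1})$ to force commutativity of $\mathcal{H}^{\xi\otimes\varepsilon}(SG_2,HG_1)$. For a complex linear character $\xi$ this map only sends $\mathcal{H}^{\xi\otimes\varepsilon}$ to itself up to conjugation of $\xi$, so it does not immediately yield commutativity; this is exactly why I would instead pass to the base group. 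Write $B=\{(g_1,g_2:1)\mid g_1,g_2\in G\}\cong G\times G$ for the base subgroup of $SG_2$. It is a normal subgroup of index $2$, its intersection with $HG_1$ is the diagonal $\Delta G=\{(a,a:1)\mid a\in G\}$, and $HG_1\cdot B=SG_2$ since $(1,1:(12))\notin B$.

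First I would record the elementary reduction: if the restriction $U\downarrow_N$ of a $G$-module $U$ to a subgroup $N$ is multiplicity-free, then $U$ itself is multiplicity-free. Indeed, for any irreducible $G$-module $V$ and any irreducible constituent $\rho$ of $V\downarrow_N$, the product of the multiplicity of $V$ in $U$ with the multiplicity of $\rho$ in $V\downarrow_N$ is at most the multiplicity of $\rho$ in $U\downarrow_N$, hence at most $1$. Applying this with $N=B$, it suffices to prove that $W\downarrow_B$ is multiplicity-free over $G\times G$.

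Next I would compute $W\downarrow_B$ by Mackey's theorem. Because $HG_1\cdot B=SG_2$ there is a single $(B,HG_1)$-double coset, and on the diagonal the $S_2$-factor is trivial, so $(\xi\otimes\varepsilon)\downarrow_{\Delta G}=\xi$; therefore $W\downarrow_B\cong \xi\uparrow_{\Delta G}^{G\times G}$. Frobenius reciprocity against an irreducible $\gamma_1\boxtimes\gamma_2$ of $G\times G$ gives
$$\langle \xi\uparrow_{\Delta G}^{G\times G},\ \gamma_1\boxtimes\gamma_2\rangle=\langle \xi,\ \gamma_1\otimes\gamma_2\rangle_G=\langle \gamma_2,\ \overline{\gamma_1}\otimes\xi\rangle_G,$$
which equals $1$ when $\gamma_2\cong\overline{\gamma_1}\otimes\xi$ and $0$ otherwise. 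Here the hypothesis $\xi\in\mathcal{L}(G)$ is decisive: linearity of $\xi$ makes $\overline{\gamma_1}\otimes\xi$ irreducible, so this inner product never exceeds $1$. Consequently
$$\xi\uparrow_{\Delta G}^{G\times G}\cong\bigoplus_{\gamma\in G^*}\gamma\boxtimes(\overline{\gamma}\otimes\xi),$$
a sum of pairwise distinct irreducibles, since they already differ in their first tensor factor. Thus $W\downarrow_B$ is multiplicity-free, and by the reduction above $W$ is multiplicity-free; that is, $(SG_2,HG_1,\xi\otimes\varepsilon)$ is a Gelfand triple.

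The only genuinely delicate ingredient is the linearity of $\xi$, which is what forces every constituent of $W\downarrow_B$ to occur at most once. I would also note the link to the rest of the section that makes this computation worth carrying out in full Clifford-theoretic detail: the swap $(1,1:(12))$ carries the constituent $\gamma\boxtimes(\overline{\gamma}\otimes\xi)$ to $(\overline{\gamma}\otimes\xi)\boxtimes\gamma$, so it fixes this constituent exactly when $\gamma\cong\overline{\gamma}\otimes\xi$, which by the preceding proposition is precisely the condition $\nu_{2}^{\xi}(\gamma)\neq0$. Tracking which of the two linear characters $\varepsilon$ of $S_2$ realizes the swap-stable constituents is what will later pin down the sign of $\alpha=\nu_{2}^{\xi}(\gamma)$, so the refined version of this argument, and not merely the bare multiplicity-free conclusion, is the bridge from Proposition \ref{mpf} to the twisted Frobenius--Schur theorem.
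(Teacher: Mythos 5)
Your argument is correct, but it follows a genuinely different route from the paper's. The paper writes down the three families of irreducible characters of $SG_{2}$ explicitly ($T^{\chi}_{1}$, $T^{\chi}_{\rm sgn}$, $U^{\chi,\eta}$) and computes each multiplicity $\langle T^{\chi}_{\varepsilon_1},\xi\otimes\varepsilon\rangle_{HG_1}$ directly; the key point there is that this multiplicity equals $\tfrac12+\varepsilon_1(12)\varepsilon(12)\tfrac{\alpha}{2}$ with $|\alpha|=1$, and integrality forces $\alpha=\pm1$ and the multiplicity into $\{0,1\}$. You instead restrict the induced module to the base group $B\cong G\times G$, identify $W\downarrow_B\cong\xi\uparrow_{\Delta G}^{G\times G}=\bigoplus_\gamma\gamma\boxtimes(\overline{\gamma}\otimes\xi)$ via Mackey (one double coset since $B\cdot HG_1=SG_2$), observe that linearity of $\xi$ makes the summands pairwise distinct irreducibles, and invoke the elementary fact that multiplicity-freeness of a restriction implies multiplicity-freeness of the module. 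All steps check out, and your base-group computation is essentially the paper's later Proposition \ref{ccc} for $(G\times G,\Delta G,\hat\eta)$, so your proof also exposes the structural reason the triple is Gelfand. What you lose relative to the paper's computation is exactly what you flag at the end: your argument establishes the Gelfand property but not the byproducts the paper extracts from the same calculation, namely the explicit decomposition of Proposition \ref{2irrdcm} (which of $S^{(2)}(\chi)$, $S^{(1^2)}(\chi)$ occurs for which $\varepsilon$) and the integrality argument yielding $\nu_2^{\xi}(\chi)\in\{-1,0,1\}$ (Theorem \ref{modFS}); Clifford theory tells you each swap-stable constituent extends in two ways differing by the sign character and exactly one extension occurs in $W$, but deciding which one still requires the twisted Frobenius--Schur computation. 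What your approach buys in exchange is brevity, independence from the explicit character table of $SG_2$, and a cleaner explanation of why linearity of $\xi$ is the decisive hypothesis.
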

\begin{proof}
The  irreducible characters of $SG_{2}$ are of the following forms; 
\begin{align*}
\begin{cases}
T^{\chi}_{1}(g,h;\sigma)=\chi(g)\chi(h)\delta_{1\sigma}+\chi(gh)\delta_{(12)\sigma}, & \\ 
T^{\chi}_{\rm sgn}(g,h;\sigma)={\rm sgn}(\sigma)(\chi(g)\chi(h)\delta_{1\sigma}+\chi(gh)\delta_{(12)\sigma}),&\\
U^{\chi,\eta}(g,h;\sigma)=\delta_{1,\sigma}(\chi(g)\eta(h)+\chi(h)\eta(g)),&(\chi\not=\eta),
,
\end{cases}
\end{align*}
where $\chi$ and $\eta$ are irreducible characters of $G$.
We remark that $U^{\chi,\eta}=U^{\eta,\chi}$.
For $\varepsilon_{1}=1$ or ${\rm sgn}$,
we compute
\begin{align*}
\langle T_{\varepsilon_1}^{\chi},\xi\otimes \varepsilon \rangle_{HG_{1}}&=
\frac{1}{2|G|}\sum_{g\in G}\chi(g)\chi(g)\overline{\xi(g)}
+\varepsilon_{1}{(12)}\varepsilon{(12)}
\frac{1}{2|G|}\sum_{g\in G}\chi(g^2)\overline{\xi(g)}\\
&=\frac{1}{2}\langle \chi,\bar{\chi}\otimes {\xi}\rangle_{G}
+\varepsilon_{1}{(12)}\varepsilon{(12)}
\frac{1}{2|G|}\sum_{g\in G}\chi(g^2)\overline{\xi(g)}\\
&=\begin{cases}
\frac{1}{2}
+\varepsilon_{1}{(12)}\varepsilon{(12)}
\frac{\alpha}{2}&(\chi = \bar{\chi} \otimes {\xi})\\
0&(\chi \not= \bar{\chi} \otimes {\xi}).
\end{cases}
\end{align*}
Since $|\alpha|=1$ and $\langle T_{\varepsilon_1}^{\chi},\xi\otimes \varepsilon \rangle_{HG_{1}}
$ is an integer, we have $\alpha=\pm1$ and
\begin{align*}
\langle T_{\varepsilon_1}^{\chi},\xi\otimes \varepsilon \rangle_{HG_{1}}
&=\begin{cases}
1&(\chi = \bar{\chi} \otimes {\xi}\ \ and\ \ \varepsilon_{1}(12)=\alpha \varepsilon(12)),\\
0&(otherwise).
\end{cases}
\end{align*}
\begin{align*}
\langle (\chi,\eta;1),\xi\otimes \varepsilon_1 \rangle_{HG_{1}}&=
\frac{1}{2|G|}\sum_{g\in G}2\chi(g)\eta(g)\overline{\xi(g)}
=\langle \chi,\bar{\eta}\otimes {\xi}\rangle_{G}\\
&=\begin{cases}
1&(\chi = \bar{\eta} \otimes {\xi})\\
0&(\chi \not= \bar{\eta} \otimes {\xi}).
\end{cases}
\end{align*}
Therefore $\xi \otimes \varepsilon \uparrow$ is multiplicity free.
Simultaneously we have all irreducible components.
\end{proof}
Also we have
\begin{proposition}\label{2irrdcm}
\begin{enumerate}
\item
\begin{align*}
(\xi \otimes 1) \uparrow_{HG_{1}}^{SG_{2}}=
\bigoplus_{\substack{\chi=\xi\otimes \overline{\chi}\\ \nu_2^{\xi}(\chi)=1}} 
 S^{(2)}(\chi)
 \oplus 
 \bigoplus_{\substack{\chi=\xi\otimes \overline{\chi}\\ \nu_2^{\xi}(\chi)=-1}}
  S^{(1^2)}(\chi)
  \oplus
 \bigoplus_{\substack{\chi \not=\xi\otimes \overline{\chi}\\ \nu_2^{\xi}(\chi)=0}}
S^{(1)}(\chi)\otimes S^{(1)}(\xi \otimes \overline{\chi})\uparrow_{SG_{1}\times SG_{1}}^{SG_{2}}
\end{align*}
\item
\begin{align*}
(\xi \otimes {\rm {sgn}})\uparrow_{HG_{1}}^{SG_{2}}=
\bigoplus_{\substack{\chi=\xi\otimes \overline{\chi}\\ \nu_2^{\xi}(\chi)=1}} 
 S^{(1^2)}(\chi)
 \oplus 
 \bigoplus_{\substack{\chi=\xi\otimes \overline{\chi}\\ \nu_2^{\xi}(\chi)=-1}}
 S^{(2)}(\chi)
  \oplus
 \bigoplus_{\substack{\chi \not=\xi\otimes \overline{\chi}\\ \nu_2^{\xi}(\chi)=0}}
S^{(1)}(\chi)\otimes S^{(1)}(\xi \otimes \overline{\chi})\uparrow_{SG_{1}\times SG_{1}}^{SG_{2}}
\end{align*}
\end{enumerate}
\end{proposition}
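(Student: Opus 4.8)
The plan is to read off both decompositions directly from the inner‑product computations already carried out in the proof of Proposition \ref{mpf}, after matching each family of irreducible characters of $SG_2$ with its label $S(\underline\lambda)$. First I would recall that the three families $T^\chi_1$, $T^\chi_{\rm sgn}$ and $U^{\chi,\eta}$ ($\chi\neq\eta$) listed there exhaust the irreducible characters of $SG_2=G\wr S_2$, since the $|G^*|$‑tuples of partitions of total weight $2$ are exactly $(2)$ at a single $\chi$, $(1^2)$ at a single $\chi$, and $(1),(1)$ at a pair $\chi\neq\eta$. Under the construction of $S(\underline\lambda)$ recalled in Section 2, $T^\chi_1$, $T^\chi_{\rm sgn}$ and $U^{\chi,\eta}$ are the characters of
$$S^{(2)}(\chi),\qquad S^{(1^2)}(\chi),\qquad S^{(1)}(\chi)\otimes S^{(1)}(\eta)\uparrow_{SG_1\times SG_1}^{SG_2}$$
respectively. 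By Frobenius reciprocity the multiplicity of each such irreducible in $(\xi\otimes\varepsilon)\uparrow_{HG_1}^{SG_2}$ equals the corresponding inner product over $HG_1$, and these are precisely the numbers computed in Proposition \ref{mpf}.

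Next I would substitute $\alpha=\nu_2^\xi(\chi)$, which by the preceding proposition equals $\pm1$ whenever $\chi=\xi\otimes\overline\chi$ (the integrality of the multiplicity together with $|\alpha|=1$ forces $\alpha$ to be a sign). The condition that $T^\chi_{\varepsilon_1}$ occurs, namely $\chi=\xi\otimes\overline\chi$ together with $\varepsilon_1(12)=\alpha\,\varepsilon(12)$, then splits cleanly. For $\varepsilon=1$ one has $\varepsilon(12)=1$, so $T^\chi_1$ (where $\varepsilon_1(12)=1$) occurs exactly when $\nu_2^\xi(\chi)=1$, contributing $S^{(2)}(\chi)$, while $T^\chi_{\rm sgn}$ (where $\varepsilon_1(12)=-1$) occurs exactly when $\nu_2^\xi(\chi)=-1$, contributing $S^{(1^2)}(\chi)$. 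For $\varepsilon={\rm sgn}$ one has $\varepsilon(12)=-1$ and the two roles of $\pm1$ are interchanged, which is precisely what turns part (1) into part (2).

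For the mixed terms I would use that $U^{\chi,\eta}$ occurs iff $\chi=\overline\eta\otimes\xi$, equivalently $\eta=\xi\otimes\overline\chi$, and that the admissibility constraint $\chi\neq\eta$ becomes $\chi\neq\xi\otimes\overline\chi$, which by the preceding proposition is exactly $\nu_2^\xi(\chi)=0$. Since $U^{\chi,\eta}$ is supported on the $\sigma=1$ part, its inner product with $\xi\otimes\varepsilon$ does not involve $\varepsilon$, so this family contributes identically to both induced representations, namely the summands $S^{(1)}(\chi)\otimes S^{(1)}(\xi\otimes\overline\chi)\uparrow_{SG_1\times SG_1}^{SG_2}$. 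Completeness of the two decompositions is then automatic: every irreducible of $SG_2$ has been tested, and the Gelfand property from Proposition \ref{mpf} guarantees multiplicity at most one.

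The only point needing care — and the one place I expect to spend a sentence — is the bookkeeping of the $U$‑family. Because $\chi\mapsto\xi\otimes\overline\chi$ is a fixed‑point‑free involution on $\{\chi\mid\nu_2^\xi(\chi)=0\}$ (here I use $\xi\overline\xi=1$ for the linear character $\xi$) and $U^{\chi,\eta}=U^{\eta,\chi}$, the displayed sum over $\chi$ visits each irreducible summand twice; the formula is therefore to be read over a set of representatives of these two‑element orbits, or with the understanding that $S^{(1)}(\chi)\otimes S^{(1)}(\xi\otimes\overline\chi)\uparrow$ and $S^{(1)}(\xi\otimes\overline\chi)\otimes S^{(1)}(\chi)\uparrow$ denote the same module. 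Beyond fixing this indexing convention, no genuine obstacle remains.
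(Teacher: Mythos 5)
Your proposal is correct and follows exactly the route the paper intends: Proposition \ref{2irrdcm} is stated there without separate proof because it is read off from the inner-product computations in the proof of Proposition \ref{mpf}, after identifying $T^{\chi}_{1}$, $T^{\chi}_{\rm sgn}$ and $U^{\chi,\eta}$ with $S^{(2)}(\chi)$, $S^{(1^2)}(\chi)$ and $S^{(1)}(\chi)\otimes S^{(1)}(\eta)\uparrow_{SG_{1}\times SG_{1}}^{SG_{2}}$ and substituting $\alpha=\nu_{2}^{\xi}(\chi)$. Your remark about the two-element orbits of $\chi\mapsto\xi\otimes\overline{\chi}$ in the $U$-family is a correct reading of the indexing convention left implicit in the statement.
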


From the proof of Proposition \ref{mpf}, we have a twisted version of
the Frobenius-Schur's theorem (See also  \cite[Theorem 9]{bumpfs}).
\begin{theorem}\label{modFS}
Let $\chi$ be an irreducible character of $G$.
Then we have
\begin{align*}
\nu_{2}^{\xi}(\chi)=\frac{1}{|G|}\sum_{x \in G} \overline{\xi(x)} \chi (x^2)=
\begin{cases}
\pm 1 & (\chi = \overline{\chi} \otimes {\xi})\\
0 & (\chi \not= \overline{\chi} \otimes {\xi}).
\end{cases}
\end{align*}
\end{theorem}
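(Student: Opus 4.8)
The statement to prove, Theorem \ref{modFS}, asserts that the twisted
indicator $\nu_2^\xi(\chi)$ takes only the values $\pm 1$ or $0$, with the
trichotomy governed by whether $\chi=\overline{\chi}\otimes\xi$. Looking at
what precedes it, essentially all the analytic content has already been
extracted: Proposition (the one computing
$\frac{1}{|G|}\sum_x\overline{\xi(x)}\chi(x^2)$) shows that the sum equals a
scalar $\alpha$ of modulus $1$ when $\chi=\overline{\chi}\otimes\xi$ and equals
$0$ otherwise. The only gap between that proposition and the theorem is the
refinement from $|\alpha|=1$ to $\alpha\in\{+1,-1\}$. So the plan is
\emph{not} to redo the character computation, but to invoke it and then close
the argument using integrality.

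First I would restate the conclusion of the earlier proposition: for
$\chi\ne\overline{\chi}\otimes\xi$ the sum vanishes, giving the second case
immediately, and for $\chi=\overline{\chi}\otimes\xi$ the sum equals a unimodular
scalar $\alpha=\mathrm{tr}(P_{\xi,R}\overline{P_{\xi,R}})/d$. The decisive step
is to argue that $\alpha$ is forced to be a real integer. For this I would
appeal directly to the multiplicity computation inside the proof of
Proposition \ref{mpf}: there the inner product
$\langle T^\chi_{\varepsilon_1},\xi\otimes\varepsilon\rangle_{HG_1}$ was shown
to equal $\frac12+\varepsilon_1(12)\varepsilon(12)\frac{\alpha}{2}$ in the case
$\chi=\overline{\chi}\otimes\xi$. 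Since this is a multiplicity of an irreducible
constituent in an induced representation, it is a nonnegative integer; as
$\varepsilon_1(12)\varepsilon(12)=\pm1$, the quantity $\frac12\pm\frac{\alpha}{2}$
being an integer forces $\alpha$ to be an odd integer, and combined with
$|\alpha|=1$ this yields $\alpha\in\{+1,-1\}$. This is exactly the remark made
at the end of that proof, so the theorem follows by assembling these pieces.

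The step I expect to carry the real weight is the integrality argument, and it
is worth stating cleanly why it is legitimate: the Hecke-algebra/Gelfand-pair
framework guarantees that the induced module $(\xi\otimes\varepsilon)\!\uparrow$
decomposes with integer (indeed $0$ or $1$) multiplicities, so the Frobenius
reciprocity pairing lands in $\mathbb{Z}_{\ge0}$. Everything else is already
done upstream. I would therefore phrase the proof as: by the preceding
proposition the sum is $0$ unless $\chi=\overline{\chi}\otimes\xi$, in which case
it is a scalar $\alpha$ with $|\alpha|=1$; by the integrality of the
multiplicities computed in the proof of Proposition \ref{mpf}, $\alpha$ is an
integer, whence $\alpha=\pm1$. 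The only thing to be careful about is making
the logical dependence explicit, so that the reader sees the theorem is a
corollary of the two propositions rather than an independent calculation;
I would also note, as the excerpt does, that $\alpha$ is independent of the
choice of unitary representation $R$ affording $\chi$, so the indicator is
well defined.
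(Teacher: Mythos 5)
Your proposal is correct and follows the paper's own route exactly: the paper states the theorem as a consequence of the proof of Proposition \ref{mpf}, where the multiplicity $\langle T^{\chi}_{\varepsilon_1},\xi\otimes\varepsilon\rangle_{HG_1}=\tfrac12+\varepsilon_1(12)\varepsilon(12)\tfrac{\alpha}{2}$ being a nonnegative integer, combined with $|\alpha|=1$ from the earlier character computation, forces $\alpha=\pm1$. Nothing is missing.
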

\
Next we consider a basis of ${\mathcal H}^{\xi,\varepsilon}(SG_{2},HG_{1})$ associated with the double coset. 
Fix a complete representatives $\{g_{R}\mid  g_{R}\in R\}$ of  $G_{**}$. 
For $(g_{1},g_{2}:\sigma) \in SG_{2}$, we define a element of ${\mathcal H}^{\xi,\varepsilon}(SG_{2},HG_{1})$ by
$$K_{(g_1,g_2;\sigma)}=\sum_{x,y \in HG_{1} }\varepsilon(xy)\xi(xy)x(g_1,g_2;\sigma) y.$$
Clearly $\{K_{x}| x \in SG_{2} \}$ spans ${\mathcal H}^{\xi,\varepsilon}(SG_{2},HG_{1})$.
\begin{proposition}\label{zeropt}
$K_{(1,g;\sigma)}=0$ if and only if $\xi(g)=-1\ {\rm and}\ C_{g}=C_{g^{-1}}$.
\end{proposition}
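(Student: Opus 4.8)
The plan is to exploit the $HG_{1}$-biequivariance of $K_{(1,g;\sigma)}$ and reduce the whole vanishing question to a single character sum over a stabilizer subgroup. Write $\zeta=\xi\otimes\varepsilon\in\mathcal L(HG_{1})$ and abbreviate $z=(1,g;\sigma)$. First I would record that $K_{z}$ is supported on the double coset $HG_{1}zHG_{1}$ and that, viewing $K_{z}$ as a function on $SG_{2}$, a direct reindexing of the defining sum gives $K_{z}(w_{1}uw_{2})=\zeta(w_{1})\zeta(w_{2})K_{z}(u)$ for all $w_{1},w_{2}\in HG_{1}$. Since $|\zeta(w)|=1$, the modulus $|K_{z}|$ is constant on each double coset; as $K_{z}$ vanishes off $HG_{1}zHG_{1}$, this yields the crucial reduction
\[
K_{z}=0\iff K_{z}(z)=0,
\]
i.e. it suffices to decide when the coefficient of $z$ itself is zero.

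Next I would compute $K_{z}(z)$. The pairs $(x,y)\in HG_{1}\times HG_{1}$ with $xzy=z$ are exactly those with $y=z^{-1}x^{-1}z$ and $x\in HG_{1}\cap zHG_{1}z^{-1}$, so
\[
K_{z}(z)=\sum_{x\in HG_{1}\cap zHG_{1}z^{-1}}\psi(x),\qquad \psi(x)=\zeta(x)\,\zeta(z^{-1}x^{-1}z).
\]
A short check, using that $\zeta$ is a homomorphism and that $x\mapsto z^{-1}x^{-1}z$ is an anti\-automorphism composed with inversion, shows that $\psi$ is a linear character of the subgroup $HG_{1}\cap zHG_{1}z^{-1}$. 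By orthogonality of characters the sum equals $|HG_{1}\cap zHG_{1}z^{-1}|$ when $\psi$ is trivial and $0$ otherwise; hence $K_{z}=0$ if and only if $\psi\not\equiv 1$.

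It then remains to identify this subgroup and evaluate $\psi$ on it. I would first note that $(1,1;(12))\in HG_{1}$, so $K_{(1,g;(12))}$ is a nonzero scalar multiple of $K_{(1,g;1)}$ and I may take $\sigma=1$. Carrying out the wreath-product multiplication for $z=(1,g;1)$, an element $(a,a;\tau)\in HG_{1}$ satisfies $z^{-1}(a,a;\tau)z\in HG_{1}$ precisely when either $\tau=1$ and $a$ lies in the centralizer $C_{G}(g)$, or $\tau=(12)$ and $a^{-1}ga=g^{-1}$; the latter elements exist if and only if $C_{g}=C_{g^{-1}}$. A direct evaluation gives $\psi\equiv 1$ on the elements of the first kind, while on an element of the second kind one computes $z^{-1}(a,a;(12))^{-1}z=(a^{-1}g,a^{-1}g;(12))$, whence $\psi\bigl((a,a;(12))\bigr)=\xi(a)\overline{\xi(a)}\,\xi(g)\,\varepsilon((12))^{2}=\xi(g)$, independent of $a$ and of $\varepsilon$. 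When $C_{g}=C_{g^{-1}}$ the value $\xi(g)=\xi(g^{-1})=\overline{\xi(g)}$ is real, so $\xi(g)=\pm1$. Therefore $\psi$ is nontrivial exactly when a second-kind element exists and $\xi(g)=-1$, i.e. when $C_{g}=C_{g^{-1}}$ and $\xi(g)=-1$, which is the desired criterion.

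The main obstacle is the last paragraph: correctly computing the intersection $HG_{1}\cap zHG_{1}z^{-1}$ and the value of $\psi$ on its reflection coset, keeping track of the twist by $g$ in the wreath-product multiplication and of the inverses. The bookkeeping there is where sign errors are most likely; once the clean identity $\psi\bigl((a,a;(12))\bigr)=\xi(g)$ is established, the equivalence follows immediately from orthogonality together with the reality of $\xi(g)$ on real classes.
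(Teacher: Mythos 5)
Your proof is correct and follows essentially the same route as the paper's: both reduce the vanishing of $K_{(1,g;\sigma)}$ to the coefficient at the representative itself, analyze the stabilizer pairs $(a,a;\tau)(1,g;\sigma)(b,b;\theta)=(1,g;\sigma)$ (finding the centralizer elements for $\tau=1$ and the elements with $a^{-1}ga=g^{-1}$ for $\tau=(12)$), and conclude that the only possible obstruction is the sign $\xi(g)=-1$ on a real class. Your packaging of the final step as orthogonality for the character $\psi$ on $HG_{1}\cap zHG_{1}z^{-1}$ is just a tidier reformulation of the paper's direct computation of ${\bf Co}(K_{(1,g;\sigma)})$, which in addition records the explicit nonzero values $2\zeta_{g}$ and $\zeta_{g}$.
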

\begin{proof}
Fix an element $g \in G$. We consider elements $a,b \in G$ and $\tau,\theta \in S_{2}$ satisfying 
$$(a,a;\tau)(1,g:\sigma)(b,b;\theta)=(1,g;\sigma).$$ 
Since $\sigma=1$ or $(12)$, we have $\tau=\theta$.

First we consider the case of $\tau=\theta=1$.
Then we have $ab=1$ and $aga^{-1}=g$.

Second we consider the case of $\tau=\theta=(12)$.
Then we have $ab=g$ and $aga^{-1}=g^{-1}$.
Therefore the conjugacy class of $g$ is a real and $\xi(g)=\pm1$. 
When  $\xi(g)=-1$
, a computation
$$K_{(1,g;\sigma)}=K_{(a,a;(12))(1,g:\sigma)(b,b;(12))}=\xi(ab)K_{(1,g;\sigma)}=\xi(g)K_{(1,g;\sigma)}
=-K_{(1,g;\sigma)},$$
gives us  $K_{(1,g;\sigma)}=0$.
Let ${\bf Co}(K_{(1,g:\sigma)})$ be the coefficient of $(1,g:\sigma)$ in $K_{(1,g;\sigma)}$.
The same computation gives us 
$
{\bf Co}(K_{(1,g:\sigma)})=
\begin{cases}
0& (\xi(g)=-1\ {\rm and}\ C_{g}=C_{g^{-1}}) \\
2
\zeta_{g}
& (\xi(g)=1\ {\rm and}\  C_{g}=C_{g^{-1}})\\
\zeta_{g}
& (C_{g} \not=C_{g^{-1}}).
\end{cases}
$
\end{proof}
Clearly $\{K_{g_{R}}\ |\ R\in G_{**}\ {\rm such \ that}\ \ K_{g}\not=0 \}$ are linearly independent.
Also Proposition \ref{zeropt} gives us 
\begin{proposition}\label{dc2bases}
A set
$$\{K_{g_{R}}\ |\ R\in G_{**}.\ {\rm If\ {\it R}\ is\ real,\ then  }\ \ \xi(g_{R}) \not=-1. \}$$ 
gives a basis of ${\mathcal H}^{\xi,\varepsilon}(SG_{2},HG_{1})$.
\end{proposition}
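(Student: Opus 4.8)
The plan is to show that the displayed family is at once a spanning set and a linearly independent set of $\mathcal{H}^{\xi,\varepsilon}(SG_2,HG_1)$, so that it is a basis. Both ingredients are essentially in place from the discussion preceding the statement; the task of the proof is to assemble them and to match the surviving elements with the stated index set via Proposition \ref{zeropt}.

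First I would establish spanning. Since $\{K_z \mid z \in SG_2\}$ spans $\mathcal{H}^{\xi,\varepsilon}(SG_2,HG_1)$, it suffices to check that each $K_z$ lies in the line spanned by $K_{g_R}$, where $g_R$ represents the double coset of $z$. For $a,b\in HG_1$, a direct reindexing in the defining sum together with the fact that $\xi\otimes\varepsilon$ is a homomorphism on $HG_1$ gives
$$K_{azb}=\overline{(\xi\otimes\varepsilon)(ab)}\,K_z,$$
so that $K_z$ depends on $z$ only through its double coset $HG_1 z HG_1$, up to a nonzero root of unity. By Theorem \ref{doublecoset} in the case $n=1$, the double cosets $HG_1\backslash SG_2/HG_1$ are indexed by $G_{**}$ with representatives $(1,g_R;1)$ ($R\in G_{**}$); hence $\{K_{g_R}\mid R\in G_{**}\}$ already spans the Hecke algebra. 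Proposition \ref{zeropt} then identifies exactly which of these vanish, namely $K_{g_R}=0$ precisely when $R$ is real and $\xi(g_R)=-1$. Discarding these leaves exactly the set displayed in the statement, which therefore still spans.

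Next I would argue linear independence. By construction $K_z=\sum_{x,y\in HG_1}(\xi\otimes\varepsilon)(xy)\,xzy$ is, as an element of $\mathbb{C}[SG_2]$, a linear combination of group elements all lying in the single double coset $HG_1 z HG_1$. Distinct double cosets are disjoint subsets of $SG_2$, so the $K_{g_R}$ attached to distinct $R\in G_{**}$ have disjoint supports on the group basis; in particular the nonzero ones among them are linearly independent. Combining this with the previous paragraph, the displayed set spans and is linearly independent, hence is a basis of $\mathcal{H}^{\xi,\varepsilon}(SG_2,HG_1)$.

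Because the substantive work has already been carried out in Proposition \ref{zeropt}, there is no serious obstacle here; the only point requiring a little care is the reduction from the full family $\{K_z\}$ to double coset representatives, i.e. the scalar identity above, which rests solely on $\xi\otimes\varepsilon$ being a character. Finally I would record the numerical payoff promised in the introduction: comparing the cardinality of this basis with the multiplicity-free decomposition of $(\xi\otimes\varepsilon)\uparrow_{HG_1}^{SG_2}$ obtained in Proposition \ref{mpf} (equivalently Proposition \ref{2irrdcm}) shows that the number of $R\in G_{**}$ that are either complex or real with $\xi(g_R)\neq -1$ equals $n_{**}^{\xi}=|G^{**}_{\xi}|$, the desired relation between the count of conjugacy classes and that of irreducible representations.
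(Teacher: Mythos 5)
Your proof is correct and follows essentially the same route as the paper: the paper also reduces to the double-coset representatives (so that $\{K_{g_R}\mid R\in G_{**}\}$ spans), invokes Proposition \ref{zeropt} to discard exactly the vanishing ones, and notes that the survivors are linearly independent because distinct double cosets are disjoint. You merely spell out the scalar identity $K_{azb}=\overline{(\xi\otimes\varepsilon)(ab)}\,K_z$ and the disjoint-support argument that the paper dismisses as ``clear,'' and your closing cardinality remark is the content of Proposition \ref{number} rather than part of this proof.
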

Combining  Proposition \ref{mpf} and Proposition \ref{dc2bases},
we have 
\begin{proposition}\label{number}
We denote by $\xi(C)$ the value of $\xi$ on a conjugacy class $C$. Then we have
\begin{align*}
|\{\chi|\chi=\overline{\chi}\otimes {\xi}\}|+\frac{1}{2}|\{\chi|\chi\not=\overline{\chi}\otimes {\xi}\}|
&=|G_{**}|-|\{C|C =C^{-1},\xi(C)=-1\}|.
\end{align*}
\end{proposition}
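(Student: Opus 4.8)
The plan is to evaluate $\dim{\mathcal H}^{\xi,\varepsilon}(SG_{2},HG_{1})$ in two ways, obtaining the left-hand side from a representation-theoretic constituent count and the right-hand side from the double-coset basis. Since $(SG_{2},HG_{1},\xi\otimes\varepsilon)$ is a Gelfand triple by Proposition \ref{mpf}, the Hecke algebra is commutative and semisimple, so its dimension equals the number of distinct irreducible constituents of $(\xi\otimes\varepsilon)\uparrow_{HG_{1}}^{SG_{2}}$. Crucially, this number does not depend on the choice of $\varepsilon\in\{1,{\rm sgn}\}$, matching the fact that both sides of the asserted identity are visibly $\varepsilon$-free.

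First I would read off the constituents from the inner-product computations in the proof of Proposition \ref{mpf}. The characters $T^{\chi}_{\varepsilon_{1}}$ occur as constituents exactly when $\chi=\overline{\chi}\otimes\xi$, and for each such $\chi$ precisely one of $T^{\chi}_{1}$, $T^{\chi}_{{\rm sgn}}$ appears, selected by the condition $\varepsilon_{1}(12)=\alpha\,\varepsilon(12)$ with $\alpha=\nu_{2}^{\xi}(\chi)=\pm1$. Hence they contribute exactly $|\{\chi\mid\chi=\overline{\chi}\otimes\xi\}|$. The characters $U^{\chi,\eta}$ occur exactly when $\eta=\overline{\chi}\otimes\xi$ with $\chi\neq\eta$; since $U^{\chi,\eta}=U^{\eta,\chi}$ and $\chi\mapsto\overline{\chi}\otimes\xi$ is a fixed-point-free involution on $\{\chi\mid\chi\neq\overline{\chi}\otimes\xi\}$ (applying it twice returns $\chi\otimes\overline{\xi}\otimes\xi=\chi$), these constituents are indexed by the unordered orbits of the involution and contribute $\tfrac12|\{\chi\mid\chi\neq\overline{\chi}\otimes\xi\}|$. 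Adding the two contributions produces the left-hand side.

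Next I would invoke Proposition \ref{dc2bases}: a basis of ${\mathcal H}^{\xi,\varepsilon}(SG_{2},HG_{1})$ is given by $\{K_{g_{R}}\mid R\in G_{**}\}$ after discarding every real $R$ with $\xi(g_{R})=-1$. A real $R$ is a single self-inverse class $C=C^{-1}$, on which $\xi(g_{R})=-1$ reads $\xi(C)=-1$, so exactly $|\{C\mid C=C^{-1},\ \xi(C)=-1\}|$ of the $|G_{**}|$ generators are discarded; thus the dimension equals the right-hand side, and equating the two values of $\dim{\mathcal H}^{\xi,\varepsilon}(SG_{2},HG_{1})$ gives the proposition. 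The main obstacle is the exactly-one bookkeeping in the $T$-type count: one must be sure that for every $\chi$ with $\chi=\overline{\chi}\otimes\xi$ neither zero nor two constituents survive. This is precisely where Theorem \ref{modFS} is essential, as it forces $\alpha=\pm1$ (never $0$) on these characters, guaranteeing that exactly one of the two sign choices meets the selection condition; the accompanying factor $\tfrac12$ in the $U$-type count then requires only the fixed-point-freeness of the involution, which is automatic from $\chi\neq\overline{\chi}\otimes\xi$.
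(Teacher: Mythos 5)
Your proposal is correct and follows the paper's own route exactly: the paper obtains this proposition precisely by "combining Proposition \ref{mpf} and Proposition \ref{dc2bases}," i.e., equating the constituent count of $(\xi\otimes\varepsilon)\uparrow_{HG_{1}}^{SG_{2}}$ with the size of the double-coset basis of ${\mathcal H}^{\xi,\varepsilon}(SG_{2},HG_{1})$. Your bookkeeping of the $T$-type and $U$-type constituents and of the discarded real classes matches the paper's computations.
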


Put 
$$n_{\xi}=|\{C|C =C^{-1},\xi(C)=-1\},  n_{R,\xi}=|\{C|C =C^{-1},\xi(C)=1\}|,n_{C}=\{C|C\not=C^{-1}\},$$ 
$$n_{R}=n_{**}-\frac{1}{2}n_{C}, n^{R,\xi}=|\{\chi|\chi=\overline{\chi}\otimes {\xi}\}|\ {\rm{ and}}\ n^{C,\xi}=|\{\chi|\chi\not=\overline{\chi}\otimes {\xi}\}|.$$
Corollary \ref{number} gives us
\begin{align}\label{eqA}
n^{R,\xi}+\frac{1}{2}n^{C,\xi}=n_{**}-n_{\xi}.
\end{align}
The following is just the relation between the number of the irreducible representation and
the number of the conjugacy classes.  
\begin{align}\label{eqB}
n^{R,\xi}+n^{C,\xi}=n_{**}+\frac{1}{2}n_{C}.
\end{align}
From (\ref{eqA}) and (\ref{eqB}), we have
\begin{theorem}\label{countinglemma}
$$\frac{1}{2}n_{C}+n_{\xi}=\frac{1}{2}n^{C,\xi}$$
and
$$n_{R}-2n_{\xi}=n^{R,\xi}.$$
\end{theorem}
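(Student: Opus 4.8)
The plan is to treat (\ref{eqA}) and (\ref{eqB}) as two linear equations in the two unknowns $n^{R,\xi}$ and $n^{C,\xi}$ and to solve them by elementary elimination; all the representation-theoretic content has already been packaged into these two identities, so both assertions of the theorem should fall out with no further input. In particular I expect no genuine obstacle in this final step — the work was done in establishing (\ref{eqA}) via the Gelfand-triple structure and the Hecke-algebra basis count.

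First I would subtract (\ref{eqA}) from (\ref{eqB}). The $n^{R,\xi}$ terms cancel and the $n^{C,\xi}$ terms combine, leaving
$$\frac{1}{2}n^{C,\xi}=\frac{1}{2}n_{C}+n_{\xi},$$
which is precisely the first claimed identity.

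For the second identity I would feed this result back into (\ref{eqA}). Rewriting (\ref{eqA}) as $n^{R,\xi}=n_{**}-n_{\xi}-\tfrac{1}{2}n^{C,\xi}$ and substituting $\tfrac{1}{2}n^{C,\xi}=\tfrac{1}{2}n_{C}+n_{\xi}$ gives
$$n^{R,\xi}=n_{**}-\frac{1}{2}n_{C}-2n_{\xi}.$$
I would then invoke the definition $n_{R}=n_{**}-\tfrac{1}{2}n_{C}$ to recognise the right-hand side as $n_{R}-2n_{\xi}$, which is the second claimed identity.

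There is essentially nothing hard remaining here: the substance lies entirely in (\ref{eqA}), whose proof rests on the fact that $(SG_{2},HG_{1},\xi\otimes\varepsilon)$ is a Gelfand triple together with the Hecke-algebra basis count of Proposition \ref{dc2bases} (repackaged as Proposition \ref{number}), while (\ref{eqB}) is the standard bookkeeping identity equating the total number of irreducible characters $n^{R,\xi}+n^{C,\xi}$ with the conjugacy-class data. The only point that needs a moment's care is keeping the counting conventions straight — $n_{C}$ counts individual complex classes, so that $\tfrac{1}{2}n_{C}$ is the number of conjugate pairs and $n_{**}=n_{R}+\tfrac{1}{2}n_{C}$ — but once this is fixed, the elimination and the final substitution are immediate.
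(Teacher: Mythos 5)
Your elimination is exactly what the paper does: it derives both identities directly from (\ref{eqA}) and (\ref{eqB}) by the same subtraction and back-substitution, using $n_{R}=n_{**}-\tfrac{1}{2}n_{C}$. The algebra checks out, so the proposal is correct and essentially identical to the paper's argument.
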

This proposition is a generalization of the fact $n_{C}=n^{C,1}$ and $n_{R}=n^{R,1}$.
\begin{example}
The character table of  $G={\rm GL}_{2}({\mathbb F}_3)$ is
$$
\left[\begin{array}{c|cccccccc|cc}
G_{*}&C_{1}&C_{2}&C_{3	}&C_{4}&C_{5}&C_{6}&C_{7}&C_{8}&&\\
\hline
G_{**}&R_1&R_2&R_3&{R_{4}}&{R_{5}}&{R_{6}}&{R_{7}}&{R_{7}}&\nu_{2}^{1}(\chi)&\nu_{2}^{\xi}(\chi)\\
\hline
\chi_1 & 1 & 1 & 1 & 1 & 1 & 1 & 1 & 1 &1&0\\
{\chi_2} & 1 & 1 & 1 & 1 & 1 & {\bf -1} & -1 & -1 &1&0\\
\chi_3 & 2 & 2 & 2 & -1 & -1 & 0 & 0 & 0 &1&-1\\
\chi_4 & 3 & 3 & -1 & 0 & 0 & 1 & -1 & -1 &1&0\\
\chi_5 & 3 & 3 & -1 & 0 & 0 & -1 & 1 & 1 &1&0\\
\chi_6 & 2 & -2 & 0 & -1 & 1 & 0 & \sqrt{2}i & -\sqrt{2}i &0&-1\\
\chi_7& 2 & -2 & 0 & -1 & 1 & 0 & -\sqrt{2}i & \sqrt{2}i &0&-1\\
\chi_8 & 4 & -4 & 0 & 1 & -1 & 0 & 0 & 0&1&-1
 \end{array}\right].
$$
We set $\xi=\chi_{2}$ and  have
$$  \chi_j =\xi \otimes \overline{\chi_j}\ (j=3,6,7,8),\ \chi_1=\xi \otimes \overline{\chi_2}\ and\  \ \chi_4 =\xi \otimes \overline{\chi_5}.$$
In this case, we have $n_{\xi}=1=|\{C_{6}\}|$, $n_{C}=2$ and $n^{C,\xi}=4=|\{\chi_{i};i=1,2,4,5\}|$.
\end{example}

%
\section{Irreducible Decomposition of Induced Representations}
\subsection{linear representation of $HG_{n}$}
The hyperoctahedral group $H_{n}$ has exactly four linear characters; 
$${\mathcal L}(H_{n})=\{1,\delta,\iota,\delta \otimes \iota\},$$
which are defined by
$$
\begin{cases}
\delta ((2i\!-\!1,2i))=-1\\
 \delta(\sigma)=1
\end{cases}
\ {\rm and}\ \ \ 
\begin{cases}
\iota ((2i\!-\!1,2i))=1,\\
\iota(\sigma)={\rm sgn}(\phi_{n}^{-1}(\sigma))
\end{cases}$$
for $1 \leq i \leq n$ and $\sigma \in \phi(S_{n})$.
We remark that $\delta$ is obtained by restricting the sign representation of $S_{2n}$
to $H_{n}$.
The following are known as the Littlewood's formula.
\begin{proposition}\label{littlewood}
\begin{enumerate}
\item
$1\uparrow_{H_{n}}^{S_{2n}}=\bigoplus_{\lambda \in P_n}S^{2\lambda}$.
\item
$\delta\uparrow_{H_{n}}^{S_{2n}}=\bigoplus_{\lambda \in P_n}S^{(2\lambda)'}$.
\item
$ \iota\uparrow_{H_{n}}^{S_{2n}}=\bigoplus_{\lambda \in SP_n}S^{D(\lambda)}$.
\item
$\delta \otimes \iota\uparrow_{H_{n}}^{S_{2n}}=\bigoplus_{\lambda \in SP_n}S^{D(\lambda)'}$.
\end{enumerate}
\end{proposition}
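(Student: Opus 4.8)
The plan is to pass to symmetric functions through the Frobenius characteristic map $\mathrm{ch}$, the linear isomorphism from class functions on $S_{2n}$ to the degree-$2n$ symmetric functions with $\mathrm{ch}(S^{\lambda})=s_{\lambda}$. It then suffices to compute $\mathrm{ch}(\psi\uparrow_{H_n}^{S_{2n}})$ for each $\psi\in{\mathcal L}(H_n)$ and to recognize the result as a sum of Schur functions of the stated shapes. I would treat only $\psi=1$ and $\psi=\iota$ directly: since $\delta$ is the restriction to $H_n$ of the sign character of $S_{2n}$, the projection formula gives $(\delta\otimes\psi)\uparrow_{H_n}^{S_{2n}}\cong \mathrm{sgn}\otimes(\psi\uparrow_{H_n}^{S_{2n}})$, and $\mathrm{sgn}\otimes S^{\lambda}\cong S^{\lambda'}$ (equivalently, $\mathrm{ch}$ intertwines the sign twist with the standard involution $\omega$). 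Hence (2) follows from (1) and (4) from (3) by transposing every shape, and only the two untwisted computations are needed.

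For these I would use the signed-permutation description of $H_n\subset S_{2n}$: a conjugacy class is indexed by a pair of partitions $(\alpha,\beta)$ with $|\alpha|+|\beta|=n$, where $\alpha$ and $\beta$ record the positive and negative cycles; a positive $k$-cycle maps to a pair of $k$-cycles in $S_{2n}$ while a negative $k$-cycle becomes a single $2k$-cycle. Thus the class $(\alpha,\beta)$ has $S_{2n}$-cycle type with power sum $p_{\alpha}^{2}\prod_i p_{2\beta_i}$, and its centralizer in $H_n$ has order $z_{\alpha,\beta}=2^{\ell(\alpha)+\ell(\beta)}z_{\alpha}z_{\beta}$. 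Substituting into $\mathrm{ch}(\psi\uparrow_{H_n}^{S_{2n}})=\sum_{(\alpha,\beta)}z_{\alpha,\beta}^{-1}\,\psi(\alpha,\beta)\,p_{\alpha}^{2}\prod_i p_{2\beta_i}$ and summing over $n$ by the exponential formula, using $\iota(\alpha,\beta)=(-1)^{n-\ell(\alpha)-\ell(\beta)}$ together with the elementary identities $p_k^2+p_{2k}=2\sum_{i\le j}x_i^kx_j^k$ and $p_k^2-p_{2k}=2\sum_{i<j}x_i^kx_j^k$, I obtain
$$\sum_{n\ge0}\mathrm{ch}(1\uparrow_{H_n}^{S_{2n}})=\prod_{i\le j}\frac{1}{1-x_ix_j},\qquad \sum_{n\ge0}\mathrm{ch}(\iota\uparrow_{H_n}^{S_{2n}})=\prod_{i\le j}(1+x_ix_j),$$
the $\delta$- and $(\delta\otimes\iota)$-series being the $\omega$-images $\prod_{i<j}(1-x_ix_j)^{-1}$ and $\prod_{i<j}(1+x_ix_j)$.

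The remaining step, and the genuine obstacle, is to expand these four products in the Schur basis. The two ``minus'' products are the classical Littlewood identities $\prod_{i\le j}(1-x_ix_j)^{-1}=\sum_{\lambda}s_{2\lambda}$ and $\prod_{i<j}(1-x_ix_j)^{-1}=\sum_{\lambda}s_{(2\lambda)'}$, whose degree-$2n$ parts give (1) and (2) after applying $\mathrm{ch}^{-1}$. The two ``plus'' products are governed by shifted diagrams and Schur's $Q$-functions: one needs $\prod_{i\le j}(1+x_ix_j)=\sum_{\mu\in SP}s_{D(\mu)}$ and its transpose $\prod_{i<j}(1+x_ix_j)=\sum_{\mu\in SP}s_{D(\mu)'}$, where $D(\mu)$ is the doubled shape of a strict partition $\mu$. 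Rather than reprove these expansions I would cite the work of James and Stembridge \cite{james,stem}, where exactly these decompositions, and the Gelfand-pair structure of $(S_{2n},H_n)$ underlying them, are established. Extracting the degree-$2n$ component of each identity and applying $\mathrm{ch}^{-1}$ then yields the four stated decompositions.
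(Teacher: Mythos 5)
Your argument is correct, but note that the paper itself offers no proof of this proposition: it is stated as ``known as the Littlewood's formula'' and used as a black box, so there is nothing internal to compare against. What you have written is the standard derivation, and it checks out. The reduction of (2) and (4) to (1) and (3) via $\delta=\mathrm{sgn}\downarrow_{H_n}$, the projection formula, and the fact that $\mathrm{ch}$ intertwines the sign twist with $\omega$ is clean and correct. The class data for $H_n\cong(\mathbb{Z}/2\mathbb{Z})\wr S_n$ (positive $k$-cycles doubling to $p_k^2$, negative $k$-cycles becoming $p_{2k}$, centralizer order $2^{\ell(\alpha)+\ell(\beta)}z_\alpha z_\beta$) is right, $\iota(\alpha,\beta)=(-1)^{n-\ell(\alpha)-\ell(\beta)}$ is the sign of the image in $S_n$ as required, and the exponential-formula computation with $p_k^2\pm p_{2k}=2\sum_{i\le j}$ (resp.\ $2\sum_{i<j}$) $x_i^kx_j^k$ correctly produces the four products. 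The genuine content is then concentrated exactly where you put it, namely the four Schur expansions: the two ``minus'' ones are Littlewood's classical identities, and the two ``plus'' ones expand over shapes of Frobenius type $(\alpha+1\mid\alpha)$ and $(\alpha\mid\alpha+1)$, which are precisely $D(\mu)$ and $D(\mu)'$ for $\mu$ strict (consistent with the paper's example $D(421)=(5441)$, whose Frobenius form is $(4,2,1\mid 3,1,0)$). Citing James and Stembridge (or Macdonald, I.5, Examples 5 and 9) for these is exactly what the paper implicitly does, so your proof supplies the derivation the paper omits rather than diverging from it.
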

Let $\pi \in {\mathcal L}(H_{n})$ and $\xi\in {\mathcal L}(G)$.
We define  $\Theta_{\xi,\pi}\in {\mathcal L}(HG_{n})$ by
$$\Theta_{\xi,\pi}(g_{1},g_{1},\cdots,g_{n},g_{n};\sigma)=\xi(g_{1}g_{2}\cdots g_{n})\pi(\sigma)$$
for  $(g_{1},g_{1},\cdots,g_{n},g_{n};\sigma) \in HG_{n}$.

\begin{proposition}
${\mathcal L}(HG_n)=\{\Theta_{\xi,\pi}\mid \xi \in {\mathcal L}(G), \pi \in {\mathcal L}(H_{n})\}$.
\end{proposition}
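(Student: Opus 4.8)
The plan is to exploit the semidirect-product structure of $HG_n$ together with the elementary fact that a one-dimensional character is invariant under conjugation. Writing $B=\{(g_1,g_1,\dots,g_n,g_n:1)\mid g_i\in G\}\cong G^n$ for the base subgroup and viewing $H_n\subset HG_n$ via $\sigma\mapsto(1,\dots,1:\sigma)$, one has $HG_n=B\rtimes H_n$ with $B$ normal and $B\cap H_n=\{1\}$. The crucial first step is to make the conjugation action of $H_n$ on $B$ explicit via the wreath-product multiplication: conjugation by $(1,\dots,1:\tau)$ sends $(g_1,g_1,\dots:1)$ to the tuple whose $j$-th entry is the $\tau^{-1}(j)$-th entry of the original. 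Because the two entries in each block $\{2i-1,2i\}$ of an element of $B$ coincide, the flip generators $(2i-1,2i)$ act trivially, whereas the block swap $\phi_n((i,i+1))=(2i-1,2i+1)(2i,2i+2)$ interchanges $g_i$ and $g_{i+1}$. Hence the $H_n$-action on $B\cong G^n$ factors through the projection $H_n\twoheadrightarrow S_n$ and is the standard coordinate permutation; in particular it is transitive on the $n$ coordinates.

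Next I would analyze an arbitrary $\Theta\in{\mathcal L}(HG_n)$ by restricting it to $B$ and to $H_n$. Since $B$ and $H_n$ together generate $HG_n$, these two restrictions determine $\Theta$. The restriction $\Theta|_B$ is a linear character of $G^n$, hence of the form $(g_1,\dots,g_n)\mapsto\prod_i\xi_i(g_i)$ with $\xi_i\in{\mathcal L}(G)$. Because $\Theta$ takes values in the abelian group $\mathbb{C}^\times$, for $b\in B$ and $q\in H_n$ we have $\Theta(qbq^{-1})=\Theta(q)\Theta(b)\Theta(q)^{-1}=\Theta(b)$, so $\Theta|_B$ is $H_n$-invariant. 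By the transitivity established above this forces $\xi_1=\cdots=\xi_n=:\xi$, and then $\Theta|_B(g_1,\dots,g_n)=\prod_i\xi(g_i)=\xi(g_1\cdots g_n)$. Setting $\pi:=\Theta|_{H_n}\in{\mathcal L}(H_n)$, we recover $\Theta=\Theta_{\xi,\pi}$, which proves the inclusion $\subseteq$.

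Finally I would verify the reverse inclusion, namely that each pair $(\xi,\pi)$ really yields a homomorphism. Using the product rule again, the product of $(g:\sigma)$ and $(h:\tau)$ in $HG_n$ has base part $g\cdot{}^\sigma h$ and permutation part $\sigma\tau$, where ${}^\sigma h$ is a coordinate permutation of $h$. Since $\xi$ is one-dimensional, $\xi$ applied to the product of the coordinates of ${}^\sigma h$ is insensitive to their order and equals $\xi(h_1\cdots h_n)$, whence $\Theta_{\xi,\pi}(XY)=\xi(g_1\cdots g_n)\,\xi(h_1\cdots h_n)\,\pi(\sigma)\pi(\tau)=\Theta_{\xi,\pi}(X)\Theta_{\xi,\pi}(Y)$. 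This shows $\Theta_{\xi,\pi}\in{\mathcal L}(HG_n)$ and completes the set equality.

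I expect the only genuine obstacle to be the first step: correctly unwinding the wreath-product multiplication so as to confirm that the flips act trivially on $B$ and that the residual action factors through the \emph{transitive} $S_n$-action. That transitivity is exactly what collapses the a priori $n$ independent characters $\xi_i$ into a single $\xi$; everything after it is bookkeeping. As a sanity check, one verifies that the $\Theta_{\xi,\pi}$ are pairwise distinct — restricting to $B$ recovers $\xi$ and restricting to $H_n$ recovers $\pi$ — so the construction produces exactly $4\,|{\mathcal L}(G)|$ characters, consistent with $|{\mathcal L}(HG_n)|=|\mathrm{Hom}(HG_n,\mathbb{C}^\times)|$.
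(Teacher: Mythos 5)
Your proof is correct, but it takes a genuinely different route from the paper's. The paper's argument is a pure counting argument: it observes the isomorphism $HG_{n}\cong (G\times{\mathbb Z}/2{\mathbb Z})\wr S_{n}$, invokes the known description of linear characters of a wreath product to get $|{\mathcal L}(HG_{n})|=|G/[G,G]|\times 2\times 2$, checks that the $\Theta_{\xi,\pi}$ are pairwise distinct, and concludes by comparing cardinalities (the surjectivity onto ${\mathcal L}(HG_n)$ is never argued structurally). Your argument instead establishes both inclusions directly: you decompose $HG_{n}=B\rtimes H_{n}$ with $B\cong G^{n}$, observe that the flips act trivially on $B$ so the conjugation action factors through the transitive permutation action of $S_{n}$, and use conjugation-invariance of a one-dimensional character to collapse the $n$ a priori independent characters $\xi_{i}$ into a single $\xi$; you also verify explicitly that each $\Theta_{\xi,\pi}$ is a homomorphism, which the paper takes for granted from its definition. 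What your approach buys is self-containedness — it does not presuppose the classification of linear characters of wreath products (whose standard proof is essentially the argument you give) — at the cost of being longer; the paper's approach is shorter but leans on that external fact. Your closing sanity check, that the construction yields exactly $4\,|{\mathcal L}(G)|$ pairwise distinct characters, is in effect the entirety of the paper's proof.
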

\begin{proof}
We remark the following isomorphism,
$HG_{n} \cong (G\times {\mathbb Z}/2{\mathbb Z}) \wr S_{n}.$
Therefore  $|{\mathcal L}(HG_{n})|= G/[G,G] \times 2 \times 2$.
It is clear that $(\xi,\pi) \not = (\xi',\pi')$, if $\xi \not= \xi'$ or $\pi \not= \pi'$.
Therefore $|\{\Theta_{\xi,\pi}\}|=4\times G/[G,G].$
\end{proof}
%
\subsection{basis associated with double coset}
\begin{definition}
We define two subsets of $P_{**}(n)$ by
$$P_{**}^{\xi,+}(n)=\left\{ (\rho(R)|R \in G_{**}); \rho(R)\ is\ 
\begin{cases}
=\emptyset,\ &\ (R\ is\ real\ and\ \xi \equiv-1\ on\ R),\\
\in P,\ &\ (otherwise).
\end{cases}\right\}$$
and
$$
P_{**}^{\xi,-}(n)=\left\{ 
 (\rho(R)|R \in G_{**});
\rho(R) \in
\begin{cases}
OP\ &\ (R\ is\ real\ and\ \xi\equiv1\ on\ R),\\
EP,\ &\ (R\ is\ real\ and\ \xi\equiv -1\ on\ R),\\
P,\ &\ (otherwise).
\end{cases}
\right\}.$$
\end{definition}
Put
$$x_{m}=\prod_{k=1}^{m}(k,2m\!\!-\!\!k\!\!-\!\!1)(2m\!\!-\!\!1,2m),$$
and
$$y_{m}=(2m\!\!-\!\!1,2m\!\!-\!\!3,\cdots,3,1)(2m,2m\!\!-\!\!2,\cdots,4,2).$$
Since  $x_{m}$ and $y_{m}$ are commutative with $\prod_{i=1}^{n}(2i-1,2i)$ , we have $x_{m}, y_{m} \in H_{n}$.

\begin{proposition}
Put $\sigma=(1,2,\cdots,2m)$ and $g \in G$. 
We assume that there exists $z \in G$ such that $zgz^{-1}= g^{-1}$. Then we have
$$(\underbrace{g^{-1}z,\cdots,g^{-1}z}_{2m-2},z,z:x_{m})(1,\cdots,1,g:\sigma)
(\underbrace{z^{-1}g,\cdots,z^{-1}g}_{2m}:y_{m}x_{m})=(1,\cdots,1,g:\sigma).$$
\end{proposition}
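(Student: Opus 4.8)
The plan is to verify the identity by a direct computation in $SG_{2m}=G\wr S_{2m}$, using the multiplication rule $(u:\alpha)(v:\beta)=(u_1v_{\alpha^{-1}(1)},\dots,u_{2m}v_{\alpha^{-1}(2m)}:\alpha\beta)$ that one reads off from the action defining $S(\underline{\lambda})$. Abbreviate the three factors as $A=(a:x_m)$, $B=(1,\dots,1,g:\sigma)$ and $C=(c:y_mx_m)$, where $a_i=g^{-1}z$ for $1\le i\le 2m-2$ and $a_{2m-1}=a_{2m}=z$, while $c_i=z^{-1}g$ for every $i$. Multiplying out, the product $ABC$ has permutation part $x_m\sigma y_m x_m$ and $i$-th group component $a_i\,b_{x_m^{-1}(i)}\,c_{(x_m\sigma)^{-1}(i)}$, where $b$ is the group part of $B$. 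I would handle the permutation part and the group part separately.

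For the permutation part the decisive observation is that $y_m=\sigma^{-2}$: reading off its two cycles shows $y_m(i)\equiv i-2\pmod{2m}$, which is exactly $\sigma^{-2}$. Similarly I would identify $x_m$ with the reflection $x_m(i)\equiv 2m-1-i\pmod{2m}$, its displayed factorization pairing $k$ with $2m-1-k$ and swapping $2m-1$ with $2m$; in particular $x_m$ is an involution, it preserves the pairs $\{2i-1,2i\}$ and so lies in $H_m\subset S_{2m}$, and $x_m(2m)=2m-1$. Every such reflection conjugates $\sigma$ into $\sigma^{-1}$, whence $x_m\sigma y_m x_m=x_m\sigma\sigma^{-2}x_m=x_m\sigma^{-1}x_m=\sigma$, which is the permutation part of the asserted equality.

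For the group part, since all entries of $c$ equal $z^{-1}g$ and $x_m^{-1}=x_m$, the $i$-th component reduces to $a_i\,b_{x_m(i)}\,z^{-1}g$, and $b_{x_m(i)}$ equals $g$ exactly when $x_m(i)=2m$, i.e. when $i=2m-1$, and equals $1$ otherwise. Splitting by range: for $1\le i\le 2m-2$ I get $g^{-1}z\cdot z^{-1}g=1$; for $i=2m$ I get $z\cdot z^{-1}g=g$; and for the distinguished index $i=2m-1$ I get $z\,g\,z^{-1}g$. Assembling these gives the group part $(1,\dots,1,g)$, so $ABC=B$.

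The only place the hypothesis is used is this last coordinate, and it is the crux of the matter: the assumption $zgz^{-1}=g^{-1}$ collapses $zgz^{-1}g$ to $g^{-1}g=1$. Everything else is bookkeeping, so I expect the main obstacle to be organizational rather than conceptual — correctly tracking $x_m^{-1}(i)$ and $(x_m\sigma)^{-1}(i)$ through the two multiplications so that the relation is invoked at precisely the coordinate $i=2m-1$, and confirming at the outset that $x_m$ (and $y_m$) genuinely lie in the hyperoctahedral group so that the stated factors are legitimate.
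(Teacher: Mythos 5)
Your proof is correct and follows essentially the same route as the paper: the decisive facts in both are that $y_m=\sigma^{-2}$ and that the reflection $x_m$ conjugates $\sigma$ to $\sigma^{-1}$, so the permutation part collapses as $x_m\sigma y_mx_m=x_m\sigma^{-1}x_m=\sigma$. The paper's own proof records only this permutation-part identity and leaves the group components to the reader; your coordinate-by-coordinate check, which locates the single place (coordinate $2m-1$) where the hypothesis $zgz^{-1}=g^{-1}$ is invoked, correctly supplies that omitted bookkeeping.
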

\begin{proof}
Since $\sigma^2 =(1,3,5,\cdots,2m-1)(2,4,6,\cdots,2m)=y_{m}^{-1}$
and
$x_{m}\sigma^{-1}x_{m}=\sigma$,
 we have
$x_{m}\sigma y_{m}x_{m}=\sigma$.
\end{proof}
\begin{proposition}\label{-1}
We have
$$(g^{-1}z,\cdots,g^{-1}z,z,z:x_{m})(z^{-1}g,\cdots,z^{-1}g:y_{m}x_{m})=(1,\cdots,1,g,g:\tau_{m}),$$
where $\tau_{m}=(246\cdots 2m)(135\cdots2m-1),$
and 
$$\Theta_{\xi,\varepsilon}((g^{-1}z,\cdots,g^{-1}z,z,z:x_{m})(z^{-1}g,\cdots,z^{-1}g:y_{m}x_{m}))=
\begin{cases}
\xi(g) & (\varepsilon=1,\delta),\\
(-1)^{m+1}\xi(g) &  (\varepsilon=\iota,\delta \otimes \iota).
\end{cases}$$
\end{proposition}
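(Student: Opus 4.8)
The plan is to prove both assertions by direct computation in $SG_{2m}$, using only the wreath-product multiplication rule together with the two permutation relations inherited from the preceding proposition, namely $y_{m}=\sigma^{-2}$ and $x_{m}\sigma^{-1}x_{m}=\sigma$, where $\sigma=(1,2,\cdots,2m)$.

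First I would compute the left-hand side. Writing $A=(g^{-1}z,\cdots,g^{-1}z,z,z:x_{m})$ and $C=(z^{-1}g,\cdots,z^{-1}g:y_{m}x_{m})$, the multiplication law extracted from the action in Section 2, namely $(a_{1},\cdots,a_{2m}:\pi)(c_{1},\cdots,c_{2m}:\rho)=(a_{1}c_{\pi^{-1}(1)},\cdots,a_{2m}c_{\pi^{-1}(2m)}:\pi\rho)$, gives the $i$-th $G$-entry of $AC$ as $b_{i}\,(z^{-1}g)$, where $b_{i}$ is the $i$-th entry of $A$. Here I use the crucial simplification that every $G$-entry of $C$ equals $z^{-1}g$, so the permutation $x_{m}$ plays no role in which entry is picked up. For $i\le 2m-2$ this is $g^{-1}z\cdot z^{-1}g=1$, and for $i=2m-1,2m$ it is $z\cdot z^{-1}g=g$, producing the claimed $G$-part $(1,\cdots,1,g,g)$.

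For the permutation part I would show $x_{m}y_{m}x_{m}=\tau_{m}$. Since $x_{m}$ is an involution by construction, I can insert $x_{m}x_{m}=1$ and use $y_{m}=\sigma^{-2}$ to write $x_{m}y_{m}x_{m}=x_{m}\sigma^{-1}(x_{m}x_{m})\sigma^{-1}x_{m}=(x_{m}\sigma^{-1}x_{m})^{2}=\sigma^{2}$, and $\sigma^{2}=(1,3,\cdots,2m-1)(2,4,\cdots,2m)=\tau_{m}$. This settles the first displayed identity and simultaneously shows the product lies in $HG_{m}$: its $G$-entries pair up as $(g_{1},g_{1},\cdots,g_{m},g_{m})$ with $g_{1}=\cdots=g_{m-1}=1$ and $g_{m}=g$, while $\tau_{m}$ centralizes $(12)(34)\cdots(2m-1,2m)$, hence $\tau_{m}\in H_{m}$.

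Finally I would evaluate $\Theta_{\xi,\varepsilon}$. By definition $\Theta_{\xi,\varepsilon}(1,\cdots,1,g,g:\tau_{m})=\xi(g_{1}\cdots g_{m})\,\varepsilon(\tau_{m})=\xi(g)\,\varepsilon(\tau_{m})$, so everything reduces to computing $\varepsilon(\tau_{m})$ for the four linear characters of $H_{m}$. The decisive observation is that $\tau_{m}$ lies in $\phi_{m}(S_{m})$: comparing the action $2i-1\mapsto 2i+1,\ 2i\mapsto 2i+2$ of $\tau_{m}$ with the rule $\phi_{m}(w):2i-1\mapsto 2w(i)-1,\ 2i\mapsto 2w(i)$, one reads off $\tau_{m}=\phi_{m}(c)$ with $c=(1,2,\cdots,m)$ the full $m$-cycle. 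Hence $\tau_{m}$ carries no factor $(2i-1,2i)$, so $\delta(\tau_{m})=1$, whereas $\iota(\tau_{m})=\mathrm{sgn}(c)=(-1)^{m-1}=(-1)^{m+1}$. Collecting the four cases ($\varepsilon=1,\delta$ give $1$; $\varepsilon=\iota,\delta\otimes\iota$ give $(-1)^{m+1}$) yields the stated values. I expect the only genuine subtlety to be the identification $\tau_{m}=\phi_{m}((1,2,\cdots,m))$, since that is what converts the abstract character $\iota$ into the concrete sign $(-1)^{m+1}$; the remaining steps are bookkeeping with the multiplication rule and the two inherited permutation relations.
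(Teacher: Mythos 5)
Your proof is correct and follows essentially the same route as the paper's: the paper likewise reduces everything to the identity $x_{m}y_{m}x_{m}=\tau_{m}$ and the observation that $\phi_{m}^{-1}(\tau_{m})=(12\cdots m)$, whence $\iota(\tau_{m})=(-1)^{m+1}$ and $\delta(\tau_{m})=1$. The only difference is that you supply details the paper leaves implicit — the entrywise computation of the $G$-part and the derivation $x_{m}y_{m}x_{m}=(x_{m}\sigma^{-1}x_{m})^{2}=\sigma^{2}$ from the relations of the preceding proposition, where the paper simply asserts "a direct computation."
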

\begin{proof}
A direct computation gives us
$x_{m} y_{m}x_{m}=\tau_{m} \in H_{n}$.
Since $\phi^{-1}_{n}(\tau_{m})=(123\cdots m)$, we have $\varepsilon(\tau_{m})=(-1)^{m+1}$.
\end{proof}
\begin{example}
For $g \in G$, suppose there exists $z \in G$ such that $zgz^{-1}=g^{-1}$.
Put $\sigma=(123456)$.
\begin{align*}
(g^{-1}\!z,g^{-1}\!z,g^{-1}\!z,g^{-1}\!z,z,z;x_3)(1,1,1,&1,1,g;\sigma)
(z^{-1}\!\!g,z^{-1}\!g,z^{-1}g,z^{-1}\!g,z^{-1}\!g,z^{-1}\!g:y_{3}x_3)\\
&=(1,1,1,1,1,g;\sigma)
\end{align*}
{$$ (g^{-1}\!z,g^{-1}\!z,g^{-1}\!z,g^{-1}\!z,z,z;x_{3})
(z^{-1}\!g,z^{-1}\!g,z^{-1}\!g,z^{-1}\!g,z^{-1}\!g,z^{-1}\!g:y_{3}x_{3})=(1,1,1,1,g,g:\tau_{3})$$}
\end{example}
\begin{proposition}\label{doublecoset1}
Let $x(\underline{\rho})$ be an element as in Theorem \ref{doublecoset}
for $\underline{\rho}=(\rho(R)|R \in G_{**}) \in P_{**}(n)$,
\begin{enumerate}
\item For $\varepsilon=1$ or $\delta$, we have
$$e_{n}^{\xi,\varepsilon} x(\underline{\rho}) e_{n}^{\xi,\varepsilon}
 \not=0 
 \Rightarrow \underline{\rho} \in P_{**}^{\xi,+}(n).$$
\item
For $\varepsilon=\iota$ or $\delta \otimes \iota$, we have
$$e_{n}^{\xi,\varepsilon} x(\underline{\rho}) e_{n}^{\xi,\varepsilon}
 \not=0
  \Rightarrow \underline{\rho} \in P_{**}^{\xi,-}(n).$$
\end{enumerate}
\end{proposition}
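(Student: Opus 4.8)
The plan is to reduce the nonvanishing of $e_n^{\xi,\varepsilon}\, x(\underline\rho)\, e_n^{\xi,\varepsilon}$ to a condition on two-sided stabilizers of $x(\underline\rho)$ in $HG_n$, and then to feed in the explicit stabilizing pairs already produced in the two propositions above. For the idempotent $e=e_n^{\xi,\varepsilon}=\frac{1}{|HG_n|}\sum_{h\in HG_n}\overline{\Theta_{\xi,\varepsilon}(h)}\,h$ one has $ea=\Theta_{\xi,\varepsilon}(a)\,e$ and $be=\Theta_{\xi,\varepsilon}(b)\,e$ for all $a,b\in HG_n$. Hence, whenever $a,b\in HG_n$ satisfy $a\,x(\underline\rho)\,b=x(\underline\rho)$,
\begin{align*}
e\,x(\underline\rho)\,e=e\,(a\,x(\underline\rho)\,b)\,e=\Theta_{\xi,\varepsilon}(a)\Theta_{\xi,\varepsilon}(b)\,e\,x(\underline\rho)\,e.
\end{align*}
Thus if some stabilizing pair $(a,b)$ has $\Theta_{\xi,\varepsilon}(a)\Theta_{\xi,\varepsilon}(b)\neq 1$, then $e\,x(\underline\rho)\,e=0$. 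I would therefore prove the contrapositive of each assertion: if $\underline\rho$ lies outside $P_{**}^{\xi,+}(n)$ (resp. $P_{**}^{\xi,-}(n)$), I exhibit a stabilizing pair on which $\Theta_{\xi,\varepsilon}(a)\Theta_{\xi,\varepsilon}(b)=-1$.

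The stabilizing pairs are supplied block by block. Since $x(\underline\rho)=[x(R_1),\dots,x(R_s)]_{2n_1,\dots,2n_s}$ is block-diagonal and each nonempty part $m=\rho_j(R)$ of a partition attached to a real class $R$ produces a block of the shape $(1,\dots,1,g_R:(1\,2\,\cdots\,2m))$, I would apply the proposition preceding Proposition \ref{-1} to that single block. Choosing $z\in G$ with $z g_R z^{-1}=g_R^{-1}$ (available precisely because $R$ is real), that proposition yields $a_0,b_0\in HG_m$ with $a_0\,(1,\dots,1,g_R:\sigma)\,b_0=(1,\dots,1,g_R:\sigma)$. Extending $a_0,b_0$ by the identity on all other blocks gives $a,b\in HG_n$ with $a\,x(\underline\rho)\,b=x(\underline\rho)$; because $\Theta_{\xi,\varepsilon}$ is a homomorphism that factors over the block decomposition and is trivial on identity blocks, Proposition \ref{-1} evaluates
\begin{align*}
\Theta_{\xi,\varepsilon}(a)\Theta_{\xi,\varepsilon}(b)=\Theta_{\xi,\varepsilon}(a_0 b_0)=
\begin{cases}
\xi(g_R) & (\varepsilon=1,\delta),\\
(-1)^{m+1}\xi(g_R) & (\varepsilon=\iota,\delta\otimes\iota).
\end{cases}
\end{align*}

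It then remains to read off the cases against the definitions. For $\varepsilon=1$ or $\delta$: if $\underline\rho\notin P_{**}^{\xi,+}(n)$, then some real $R$ with $\xi\equiv-1$ on $R$ has $\rho(R)\neq\emptyset$; choosing any part $m$ of $\rho(R)$ produces a pair with $\Theta_{\xi,\varepsilon}(a)\Theta_{\xi,\varepsilon}(b)=\xi(g_R)=-1$, so $e\,x(\underline\rho)\,e=0$. For $\varepsilon=\iota$ or $\delta\otimes\iota$: if $\underline\rho\notin P_{**}^{\xi,-}(n)$, then either a real class $R$ with $\xi\equiv1$ carries a partition with an even part $m$, giving $(-1)^{m+1}\xi(g_R)=-1$, or a real class $R$ with $\xi\equiv-1$ carries a partition with an odd part $m$, giving $(-1)^{m+1}\xi(g_R)=(-1)(-1)=-1$; either way $e\,x(\underline\rho)\,e=0$. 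The only content beyond bookkeeping is the identity $ea=\Theta_{\xi,\varepsilon}(a)e$ together with the block multiplicativity of $\Theta_{\xi,\varepsilon}$; the step requiring most care is matching the sign $(-1)^{m+1}\xi(g_R)$ against the three defining regimes of $P_{**}^{\xi,-}(n)$, which is where I would expect any slip to occur.
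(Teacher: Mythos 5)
Your proposal is correct and follows essentially the same route as the paper: the paper also argues by exhibiting, via Proposition \ref{-1}, a pair $y,z\in HG_{n}$ with $y\,x(\underline{\rho})\,z=x(\underline{\rho})$ and $\Theta_{\xi,\varepsilon}(yz)=-1$ whenever $\underline{\rho}\notin P_{**}^{\xi,\pm}(n)$, and then concludes $e_{n}^{\xi,\varepsilon}x(\underline{\rho})e_{n}^{\xi,\varepsilon}=0$ from the identity $e\,(yx z)\,e=\Theta_{\xi,\varepsilon}(yz)\,e\,x\,e$. Your version merely spells out the block-by-block bookkeeping and the sign check $(-1)^{m+1}\xi(g_{R})=-1$ that the paper leaves implicit.
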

\begin{proof}
We assume $\underline{\rho} \not \in P_{**}^{\xi,\pm}(n)$.
Then we can choose $y,z \in HG_{n}$ which satisfy
$yx(\underline{\rho})z=x(\underline{\rho})$
and
 $\Theta_{\xi,\varepsilon}(yz)=-1$ 
from Proposition \ref{-1}.
Since
$e_{n}^{\xi,\varepsilon}y x(\underline{\rho}) ze_{n}^{\xi,\varepsilon}
=\Theta_{\xi,\varepsilon}(yz)e_{n}^{\xi,\varepsilon} x(\underline{\rho}) e_{n}^{\xi,\varepsilon}
=-e_{n}^{\xi,\varepsilon} x(\underline{\rho}) e_{n}^{\xi,\varepsilon}$,
 we have $e_{n}^{\xi,\varepsilon}y x(\underline{\rho}) ze_{n}^{\xi,\varepsilon}=0$.
\end{proof}
\subsection{permutation representations}
Fix $\xi \in {\mathcal L}(G)$.
\begin{definition}
Let $X'=\{\lambda' \mid \lambda \in X \}$ for a subset $X$ of $P$.
We define the following four subsets of $P^{*}(n)=\{\underline{\lambda}=(\lambda(\chi)|\chi \in G^*)\ \mid\ |\underline{\lambda}|=2n\}$.
\begin{enumerate}
\item
$\displaystyle{P^{**}_{\xi,1}(n)=
\left\{
(\lambda(\chi)|\chi \in G^*)
\mid 
\lambda(\chi)\ is\ 
{\small
\begin{cases}
\in E,\ &\ (\nu_{2}^{\xi}(\chi)=1),\\
\in E',\ &\ (\nu_{2}^{\xi}(\chi)=-1),\\
=\lambda({\xi}
\otimes \overline{\chi}) \in P, & \ (\nu_{2}^{\xi}(\chi)=0).
\end{cases}
}
\right\}.}$
\item
$\displaystyle{
P^{**}_{\xi,\delta}(n)=
\left\{
(\lambda(\chi)|\chi \in G^*)
\mid 
\lambda(\chi)\ is\ 
{\small \begin{cases}
\in E,\ &\ (\nu_{2}^{\xi}(\chi)=-1),\\
\in E',\ &\ (\nu_{2}^{\xi}(\chi)=1),\\
=\lambda({\xi}
\otimes \overline{\chi})\in P, & \ (\nu_{2}^{\xi}(\chi)=0).
\end{cases}}
\right\}.
}$
\end{enumerate}
Put $DSP=\{D(\lambda) \mid \lambda \in SP\}$.
\begin{enumerate}
\item[(3)]
$\displaystyle{
P^{**}_{\xi,\iota}(n)=
\left\{
(\lambda(\chi)|\chi \in G^*)\
\mid 
\lambda(\chi)\ is\ 
{\small \begin{cases}
 \in DSP,\ &\ (\nu_{2}^{\xi}(\chi)=1),\\
\in DSP',\ &\ (\nu_{2}^{\xi}(\chi)=-1),\\
= \lambda({\xi}
\otimes \overline{\chi})'\in P, & \ (  
 \nu_{2}^{\xi}(\chi)=0).
\end{cases}}
\right\}.
}$
\item[(4)]
$\displaystyle{
P^{**}_{\xi,\delta \otimes \iota}(n)=
\left\{
(\lambda(\chi)|\chi \in G^*)
 \mid 
\lambda(\chi)\ is\ 
{\small \begin{cases}
\in DSP,\ &\ (\nu_{2}^{\xi}(\chi)=-1),\\
\in DSP',\ &\ (\nu_{2}^{\xi}(\chi)=1),\\
=\lambda({\xi}
\otimes \overline{\chi})'\in P, & \ (  
 \nu_{2}^{\xi}(\chi)=0).
\end{cases}}
\right\}.
}$
\end{enumerate}
\end{definition}
\begin{proposition}\label{equals}
\begin{enumerate}
\item
$|P^{**}_{\xi,1}(n)|=|P^{**}_{\xi,\delta}(n)|=|P_{**}^{\xi,+}(n)|.$
\item
$|P^{**}_{\xi,\iota}(n)|=|P^{**}_{\xi,\delta \otimes \iota}(n)|=|P_{**}^{\xi,-}(n)|.$
\end{enumerate}
\end{proposition}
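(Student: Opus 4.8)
The plan is to prove all three equalities in each part at once by passing to generating functions in a single weight variable $t$ and showing the relevant formal power series coincide. Each of the six sets is defined by attaching a partition to every slot (one slot per $\chi\in G^{*}$ for the families $P^{**}_{\xi,\bullet}$, one per $R\in G_{**}$ for $P_{**}^{\xi,\pm}$) subject to a constraint depending only on that slot, so $\sum_{n\ge 0}|\,\cdot\,|\,t^{n}$ factors as a product of one ``local'' series per slot. The statement thus reduces to (i) identifying the local series and (ii) comparing exponents. Note the weight normalization: the $P^{**}_{\xi,\bullet}$ families carry total weight $2n$ while the $P_{**}^{\xi,\pm}$ families carry weight $n$, so a partition of weight $w$ in a $P^{**}$-slot contributes $w/2$ to $t^{n}$.

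First I would set $F(t)=\prod_{i\ge1}(1-t^{i})^{-1}$ and $G(t)=\prod_{i\ge1}(1+t^{i})$ and record the weight generating functions of the partition classes involved. An arbitrary-partition slot contributes $F(t)$; an even-partition slot of a weight-$2n$ tuple, $\sum_{\lambda\in E}t^{|\lambda|/2}$, also equals $F(t)$ (halve every part), and likewise for $E'$; a $DSP$- or $DSP'$-slot of a weight-$2n$ tuple gives $\sum_{\mu\in SP}t^{|\mu|}=G(t)$, since $|D(\mu)|=2|\mu|$; for $P_{**}^{\xi,-}$ an odd-partition slot gives $\sum_{\rho\in OP}t^{|\rho|}=\prod_{i\ge1}(1-t^{2i-1})^{-1}$ and an even-partition slot gives $\prod_{i\ge1}(1-t^{2i})^{-1}=F(t^{2})$. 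The two Euler identities I will invoke are $G(t)=\prod_{i\ge1}(1-t^{2i-1})^{-1}$ (distinct parts versus odd parts) and the factorization $F(t)=F(t^{2})\,G(t)$. For each $\chi$ with $\nu_{2}^{\xi}(\chi)=0$ the involution $\chi\mapsto\xi\otimes\overline{\chi}$ is fixed-point-free (by Theorem \ref{modFS}), and the defining constraints tie the two slots of such a pair together via a single free partition (equal, or transposed, in the two slots) of weight contributing $|\lambda(\chi)|$ to $t^{n}$; hence each pair yields one factor $F(t)$.

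Writing $a,b,c$ for the numbers of $\chi$ with $\nu_{2}^{\xi}(\chi)=1,-1,0$, so that $a+b=n^{R,\xi}$ and $c=n^{C,\xi}$, the bookkeeping gives
\[
\sum_{n}|P^{**}_{\xi,1}(n)|\,t^{n}=\sum_{n}|P^{**}_{\xi,\delta}(n)|\,t^{n}=F(t)^{\,a+b+c/2},\qquad \sum_{n}|P_{**}^{\xi,+}(n)|\,t^{n}=F(t)^{\,n_{**}-n_{\xi}},
\]
and Part (1) follows from $n^{R,\xi}+\tfrac12 n^{C,\xi}=n_{**}-n_{\xi}$, which is exactly (\ref{eqA}). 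Similarly,
\[
\sum_{n}|P^{**}_{\xi,\iota}(n)|\,t^{n}=\sum_{n}|P^{**}_{\xi,\delta\otimes\iota}(n)|\,t^{n}=G(t)^{\,n^{R,\xi}}F(t)^{\,n^{C,\xi}/2},\qquad \sum_{n}|P_{**}^{\xi,-}(n)|\,t^{n}=G(t)^{\,n_{R,\xi}}F(t^{2})^{\,n_{\xi}}F(t)^{\,n_{C}/2}.
\]
Substituting $F(t)=F(t^{2})G(t)$ and equating the exponents of $G(t)$ and of $F(t^{2})$ reduces Part (2) to the two relations $\tfrac12 n^{C,\xi}=n_{\xi}+\tfrac12 n_{C}$ and $n^{R,\xi}+\tfrac12 n^{C,\xi}=n_{R,\xi}+\tfrac12 n_{C}$, both of which are immediate from Theorem \ref{countinglemma} together with $n_{R,\xi}=n_{R}-n_{\xi}$.

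The routine-but-essential core, where the real content sits, is this final exponent-matching: the generating-function factorizations upstream are mechanical, and the equalities hold only because Theorem \ref{countinglemma} converts character data ($n^{R,\xi},n^{C,\xi}$) into class data ($n_{\xi},n_{R,\xi},n_{C}$). I expect the main care to be needed in keeping the weight normalization consistent across the two families and in the two Euler identities. I would also remark that the two ``twisted'' equalities $|P^{**}_{\xi,1}(n)|=|P^{**}_{\xi,\delta}(n)|$ and $|P^{**}_{\xi,\iota}(n)|=|P^{**}_{\xi,\delta\otimes\iota}(n)|$ admit a direct proof with no generating functions at all, via the weight-preserving slotwise transposition $(\lambda(\chi))\mapsto(\lambda(\chi)')$, which interchanges their defining constraints.
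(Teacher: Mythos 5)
Your proof is correct and is essentially the paper's argument: the paper's (very terse) proof likewise combines Theorem \ref{countinglemma} with the two bijections $SP_{n}\rightarrow OP_{n}$ and $P_{n}\rightarrow\bigcup_{n_{0}+n_{1}=n}EP_{n_{0}}\cup OP_{n_{1}}$, which are exactly the bijective forms of your two Euler identities $G(t)=\prod_{i\ge1}(1-t^{2i-1})^{-1}$ and $F(t)=F(t^{2})G(t)$. You have simply packaged the same ingredients as a slot-by-slot generating-function computation and supplied the exponent bookkeeping that the paper leaves implicit.
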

\begin{proof}
It is easy to obtain the first claim by using Theorem \ref{countinglemma}.
The second claim
 can be obtained by using Theorem \ref{countinglemma} and  two bijections:
$$SP_{n}\rightarrow OP_{n} \ {\rm and}\ P_{n} \rightarrow \bigcup_{n_{0}+n_{1}=n} EP_{n_{0}}\cup OP_{n_{1}}.$$
\end{proof}
\begin{example}
We consider the character table of the quaternion group $Q_{8}$. Put $\xi=\chi_{2}$.
$$
\left[\begin{array}{c|ccccc|cc}
   & C_1 & C_2 & C_3 & C_4 & C_5 &  &  \\
  & R_1 & R_2 & R_3 & R_4 & R_5 & \nu_2^{1} & \nu_{2}^{\xi} \\
\hline
{\chi_{1}} & 1 & 1 & 1 & 1 & 1 & 1 & 0\\
{\chi_{2}}& 1 & {-1} & 1 & 1 & {-1} & 1 & 0 \\
{\chi_{3}} & 1 & 1 & 1 & -1 & -1 & 1 & 0 \\
{\chi_{4}} & 1 & -1 & 1 & -1 & 1 & 1 & 0 \\
\chi_{5} & 2 & 0 & -2 & 0 & 0 & -1 & 1
\end{array}\right].$$
We have the following bijections, 
$$P_{\xi,\varepsilon}^{**}(n)=\{\underline{\lambda}=(\lambda,\lambda,\mu,\mu,D(\nu))|\lambda,\mu \in {P},
\nu \in SP \}
\longrightarrow
\bigcup_{n_{0}+n_{1}+n_{2}=n}\!\!\!\!\!\!P_{n_{0}}\cup P_{n_{1}}\cup SP_{n_{2}}
$$
and
\begin{align*}
&P^{\xi,\varepsilon}_{**}(n)=\{(\lambda^{1}, \lambda^{2},\lambda^{3},\lambda^{4},\lambda^{5})|
\lambda^{1},\lambda^{3},\lambda^{5} \in P, \lambda^{2},\lambda^{4} \in EP\}\\
&\longrightarrow \!\!\!\!\!\!\!\!\!\!\!\!
\bigcup_{n_{1}+n_{2}+n_{3}+n_{4}+n_{5}=n}\!\!\!\!\!\!\!\!\!\!\!\!OP_{n_{0}}\cup EP_{n_{1}}\cup OP_{n_{2}}\cup OP_{n_{3}}\cup EP_{n_{4}}
\longrightarrow
\bigcup_{n_{0}+n_{1}+n_{2}=n}\!\!\!\!\!\!
P_{n_{0}}\cup OP_{n_{1}} \cup P_{n_{2}}.
\end{align*}
By using $SP_{n}\twoheadrightarrow OP_{n}$, we have
$|P_{\xi,\varepsilon}^{**}(n)|=|P^{\xi,\varepsilon}_{**}(n)|$ .
\end{example}
Let $\chi$ be an irreducible character of $G$.
For  $\lambda,\mu\in P_{n}$ and $\xi \in {\mathcal L}(G)$, 
we denote by $S^{\lambda,\mu}(\chi,\xi)$ a irreducible representation 
$S^{\lambda,\mu}(\chi,\xi)=S^{\lambda}(\chi) \otimes S^{\mu}(\overline{\chi}\otimes \xi)
\uparrow_{SG_{n}\times SG_{n}}^{SG_{2n}}$ of $SG_{2n}$.
\begin{proposition}\label{smallinv}
Let $\xi \in {\mathcal L}(G)$. Put $x=(g_{1},g_{1},g_{2},g_{2},\cdots,g_{n},g_{n};\sigma) \in HG_{n}$.
\begin{enumerate}
\item
$S^{2\lambda}(\chi)$ is an irreducible component of 
$\begin{cases}
\Theta_{\xi,1}(x)\uparrow_{HG_{n}}^{SG_{2n}} & (\nu_{2}^{\xi}(\chi)=1) \\
\Theta_{\xi,\delta}(x)\uparrow_{HG_{n}}^{SG_{2n}}  & (\nu_{2}^{\xi}(\chi)=-1). 
\end{cases}$
\item
$S^{(2\lambda)'}(\chi)$ 
 is an irreducible component of 
$\begin{cases}
\Theta_{\xi,1}(x)\uparrow_{HG_{n}}^{SG_{2n}} & (\nu_{2}^{\xi}(\chi)=-1) \\
\Theta_{\xi,\delta}(x)\uparrow_{HG_{n}}^{SG_{2n}}  & (\nu_{2}^{\xi}(\chi)=1). 
\end{cases}$
%
\item
$S^{D(\lambda)}(\chi)$ 
is an irreducible component of 
$\begin{cases}
\Theta_{\xi,\iota}(x)\uparrow_{HG_{n}}^{SG_{2n}} & (\nu_{2}^{\xi}(\chi)=1) \\
\Theta_{\xi,\delta \otimes \iota}(x)\uparrow_{HG_{n}}^{SG_{2n}}  & (\nu_{2}^{\xi}(\chi)=-1). 
\end{cases}$
\item
$S^{D(\lambda)'}(\chi)$ 
is an irreducible component of 
$\begin{cases}
\Theta_{\xi,\iota}(x)\uparrow_{HG_{n}}^{SG_{2n}} & (\nu_{2}^{\xi}(\chi)=-1) \\
\Theta_{\xi,\delta \otimes \iota}(x)\uparrow_{HG_{n}}^{SG_{2n}}  & (\nu_{2}^{\xi}(\chi)=1). 
\end{cases}$

\item
If $\nu_{2}^{\xi}(\chi)=0$, then  
$S^{\lambda,\lambda}(\chi,\xi)$ 
is an irreducible component of $\Theta_{\xi,\pi}(x)\uparrow_{HG_{n}}^{SG_{2n}}$  for $\pi=1$ or $\delta$.
\item
If $\nu_{2}^{\xi}(\chi)=0$, then  
$S^{\lambda,\lambda'}(\chi,\xi)$ is an irreducible component of $\Theta_{\xi,\pi}(x)\uparrow_{HG_{n}}^{SG_{2n}}$  
for $\pi=\iota$ or $\delta \otimes \iota$.
\end{enumerate}
\end{proposition}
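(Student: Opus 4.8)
The plan is to test membership by Frobenius reciprocity: $S(\underline{\lambda})$ is an irreducible component of $\Theta_{\xi,\pi}\uparrow_{HG_{n}}^{SG_{2n}}$ if and only if $\langle\Theta_{\xi,\pi},S(\underline{\lambda})\downarrow_{HG_{n}}\rangle_{HG_{n}}\neq0$, equivalently if $S(\underline{\lambda})$ carries a nonzero vector on which $HG_{n}$ acts through $\Theta_{\xi,\pi}$. I would split into the two regimes already visible in Theorem \ref{modFS}: the real case $\chi=\overline{\chi}\otimes\xi$ (where $\nu_{2}^{\xi}(\chi)=\pm1$) governing items (1)--(4), and the complex case $\chi\neq\overline{\chi}\otimes\xi$ (where $\nu_{2}^{\xi}(\chi)=0$) governing items (5)--(6). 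In both regimes the point is to reduce the $S_{2n}$-side to the classical Littlewood formula (Proposition \ref{littlewood}), with the twisted Frobenius--Schur sign of Theorem \ref{modFS} controlling how the partition is doubled or transposed; the base case $n=1$ of Proposition \ref{2irrdcm} is the shadow of this.

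For items (1)--(4) the module is $S^{\mu}(\chi)=V_{\chi}^{\otimes 2n}\otimes S^{\mu}$ (single label $\chi$), so I would build the spherical vector explicitly. The key lemma, proved from Proposition \ref{Kondo} exactly as in the computation giving Theorem \ref{modFS}, is that when $\chi=\overline{\chi}\otimes\xi$ the space $\{v\in V_{\chi}\otimes V_{\chi}\mid (g\otimes g)v=\xi(g)v\ (\forall g)\}$ is one-dimensional, and the flip $a\otimes b\mapsto b\otimes a$ acts on it by the scalar $\alpha=\nu_{2}^{\xi}(\chi)$. Concretely, if $R=P_{\xi,R}D_{\xi}\overline{R}\,P_{\xi,R}^{*}$ then $v=\sum_{i,j}(P_{\xi,R})_{ij}\,e_{i}\otimes e_{j}$ spans this line, and the relation ${}^{t}P_{\xi,R}=\alpha P_{\xi,R}$ forces $\mathrm{flip}(v)=\alpha v$. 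Placing a copy of $v$ in each coordinate pair $\{2i-1,2i\}$ produces $v^{\otimes n}\in V_{\chi}^{\otimes 2n}$ on which the pure-$G$ part of $HG_{n}$ acts by $\xi(g_{1}\cdots g_{n})$, each within-pair transposition acts by $\alpha$, and the block permutations $\phi_{n}(S_{n})$ act trivially. Tensoring with $w_{0}\in S^{\mu}$, the $HG_{n}$-equivariance of $v^{\otimes n}\otimes w_{0}$ reduces to requiring $w_{0}$ to be a $(\pi\beta)$-spherical vector for $(S_{2n},H_{n})$, where $\beta=1$ if $\alpha=1$ and $\beta=\delta$ if $\alpha=-1$, since $\delta$ is precisely the within-pair sign. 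Proposition \ref{littlewood} then supplies such a $w_{0}$ exactly for the stated $\mu$: the replacement $\pi\mapsto\pi\delta$ forced by $\alpha=-1$ is what interchanges $2\lambda\leftrightarrow(2\lambda)'$ and $D(\lambda)\leftrightarrow D(\lambda)'$, reproducing the sign conventions in (1)--(4).

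For items (5)--(6) the module $S^{\lambda,\mu}(\chi,\xi)=\mathrm{Ind}_{SG_{n}\times SG_{n}}^{SG_{2n}}\bigl(S^{\lambda}(\chi)\boxtimes S^{\mu}(\overline{\chi}\otimes\xi)\bigr)$ is itself induced, so I would compute $\langle\Theta_{\xi,\pi}\uparrow,S^{\lambda,\mu}(\chi,\xi)\rangle$ by Mackey's formula over $HG_{n}\backslash SG_{2n}/(SG_{n}\times SG_{n})$ and exhibit one double coset giving a positive contribution; since every Mackey term is a nonnegative integer, this suffices. Choosing the arrangement in which $SG_{n}\times SG_{n}$ occupies the odd, respectively even, coordinates, the relevant intersection with $HG_{n}$ is the diagonal copy $HG_{n}^{0}\cong SG_{n}$ generated by $\phi_{n}(S_{n})$ and the paired $G$-coordinates. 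On $HG_{n}^{0}$ the character $\Theta_{\xi,\pi}$ becomes the linear character $(g_{1},\dots,g_{n};\tau)\mapsto\xi(g_{1}\cdots g_{n})\eta(\tau)$ with $\eta=1$ for $\pi\in\{1,\delta\}$ and $\eta=\mathrm{sgn}$ for $\pi\in\{\iota,\delta\otimes\iota\}$, while $S^{\lambda}(\chi)\boxtimes S^{\mu}(\overline{\chi}\otimes\xi)$ restricts to the inner tensor $S^{\lambda}(\chi)\otimes S^{\mu}(\overline{\chi}\otimes\xi)$ of $SG_{n}$-modules. Using $\overline{S^{\lambda}(\chi)}\otimes\Xi=S^{\lambda}(\overline{\chi}\otimes\xi)$ for the $\xi$-twisting linear character $\Xi$, together with the sign-twist $S^{\lambda}(\cdot)\otimes\mathrm{sgn}=S^{\lambda'}(\cdot)$, the identity-coset contribution equals $\langle S^{\lambda}(\overline{\chi}\otimes\xi),S^{\mu}(\overline{\chi}\otimes\xi)\rangle$ when $\eta=1$ and $\langle S^{\lambda'}(\overline{\chi}\otimes\xi),S^{\mu}(\overline{\chi}\otimes\xi)\rangle$ when $\eta=\mathrm{sgn}$; these are nonzero exactly for $\mu=\lambda$ (cases $\pi=1,\delta$) and $\mu=\lambda'$ (cases $\pi=\iota,\delta\otimes\iota$), which is the assertion.

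The main obstacle is the pair-vector lemma and its bookkeeping: proving that the flip acts by precisely $\alpha=\nu_{2}^{\xi}(\chi)$ (and not merely by $\pm1$), and then tracking how this within-pair symmetry multiplies against the four characters of $H_{n}$, so that the two signs of $\alpha$ and the four values of $\pi$ assemble into exactly the eight sub-cases (1)--(4). Once this is in place, the entire $S_{2n}$-side is handed to Proposition \ref{littlewood}, and the complex case is a routine, if careful, Mackey computation after the correct odd/even double-coset representative has been fixed.
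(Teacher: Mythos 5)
Your proposal is correct, and for items (1)--(4) it is essentially the paper's own argument: the paper likewise takes the pair vector $f_\chi$ (resp.\ $g_\chi$) in $V_\chi\otimes V_\chi$ on which $(g,g:(12))$ acts by $\xi(g)$ (resp.\ $-\xi(g)$) --- this is exactly your $v=\sum_{i,j}(P_{\xi,R})_{ij}\,e_i\otimes e_j$ with flip-eigenvalue $\alpha=\nu_2^{\xi}(\chi)$, imported from Proposition \ref{2irrdcm} --- tensors $n$ copies of it with the $\pi$- or $\pi\delta$-spherical vector of $S^{W(\lambda)}$ supplied by Proposition \ref{littlewood}, and reads off the same $2\lambda\leftrightarrow(2\lambda)'$, $D(\lambda)\leftrightarrow D(\lambda)'$ bookkeeping. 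For items (5)--(6) your route differs in presentation but not in substance: the paper also reduces to the diagonal restriction identity $\langle\chi_1\otimes\chi_2,\xi\rangle_{\Delta G}=\delta_{\chi_2,\overline{\chi_1}\otimes\xi}$, but instead of invoking Mackey it exhibits the spherical vector directly inside the induced module as $t_{\pm}\otimes k_\lambda$ with $t_{\pm}=\prod_i\bigl(1\pm(2i-1\ 2i)\bigr)$, checking nonvanishing via $\langle(2i-1\ 2i)\mid i\rangle\cap\phi^{-1}(S_n)=\{1\}$. Your Mackey version (one nonnegative term per double coset, with the identity coset contributing $\delta_{\mu,\lambda}$ or $\delta_{\mu,\lambda'}$) buys you freedom from that nonvanishing check, at the cost of setting up the double-coset decomposition $HG_n\backslash SG_{2n}/(SG_n^{o}\times SG_n^{e})$; both are sound, and the identity coset in your computation is precisely where the paper's vector lives.
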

\begin{proof}
We write $\sigma \in H_{n}$ in the form of $\sigma=(12)^{\epsilon_{1}}(34)^{\epsilon_{2}}
\cdots (2n-1\ 2n)^{\epsilon_{n}} \tau $,
where $\epsilon_{i} \in \{0,1\}$ and $\tau \in \phi^{-1}(S_{n})$.
Firstly we consider (1). If $\nu_{2}^{\xi}(\chi)=1$ (resp. $-1$), then $V_{\chi} \otimes V_{\chi}$ has an
 element $f_{\chi}$ (resp. $g_{\chi}$) such that $(g,g:(12))f_{\chi}=\xi(g)f_{\chi}$ 
 (resp. $(g,g:(12))g_{\chi}=-\xi(g)g_{\chi}$)
 from Proposition \ref{2irrdcm}.
From proposition \ref{littlewood}, $S^{2 \lambda}$
 has an element $u_{2\lambda}$  such that 
$\sigma u_{\lambda}=u_{\lambda}$.
We consider $f_{\chi}^{\otimes n} \otimes u_{\lambda} \in S^{2\lambda}(\chi)$ and
compute
\begin{align*}
x f_{\chi}^{\otimes n} \otimes u_{\lambda} &=
(g_{1},g_{1}:(12)^{\varepsilon_{1}})f_{\chi}
\otimes
\cdots
\otimes
(g_{n},g_{n}:(2n-1\ 2n)^{\varepsilon_{n}})f_{\chi}
\otimes
\sigma u_{\lambda}\\
&=\xi(g_{1})\xi(g_{2})\cdots \xi(g_{n}) f_{\chi}^{\otimes n}\otimes  u_{\lambda}
=\xi(g_{1}g_{2}\cdots g_{n}) f_{\chi}^{\otimes n}\otimes  u_{\lambda}.
\end{align*}
Also we consider $g_{\chi}^{\otimes n} \otimes v_{\lambda} \in S^{2\lambda}(\chi)$ and
compute
\begin{align*}
x {g_{\chi}}^{\otimes n} \otimes v_{\lambda} &=
(g_{1},g_{1}:(12)^{\varepsilon_{1}}){g_{\chi}}
\otimes
\cdots
\otimes
(g_{n},g_{n}:(2n-1\ 2n)^{\varepsilon_{n}}){g_{\chi}}
\otimes
\sigma u_{\lambda}\\
&=(-1)^{\varepsilon_{1}+\cdots+\varepsilon_{n}}\xi(g_{1})\xi(g_{2})\cdots \xi(g_{n})
g_{\chi}^{\otimes n}\otimes  u_{\lambda}
=\delta(\sigma)\xi(g_{1}g_{2}\cdots g_{n})
{g_{\chi}}^{\otimes n}\otimes  u_{\lambda}.
\end{align*}
(2),(3) and (4) are obtained by the same way.
Second
we consider the case of $\nu_{2}^{\xi}(\chi)=0$. 
%
%
%
Let $\Delta G$ be the diagonal subgroup of $G \times G$.
We consider the irreducible representations $\chi_1 \otimes \chi_2$ of $G \times G$.
Then an easy computation of characters gives us the following formula
of the intertwining number; 
$\langle \chi_{1}\otimes {\chi_2},\xi \rangle_{\Delta G}=\delta_{\chi_{2},\overline{\chi_{1}}\otimes \xi}$
(We will consider more detailed discussion of this fact in Proposition \ref{ccc}).  
We remark an isomorphism $S^{\lambda}(\overline{\chi} \otimes \xi) \cong S^{\lambda}(\overline{\chi})\otimes S^{(n)}(\xi)$.
Therefore $S^{\lambda}(\chi)\otimes S^{\lambda}(\overline{\chi} \otimes \xi)$ have an element 
such that $y k_{\lambda}=\xi(x_{1}x_{2}\cdots x_{n}) k_{\lambda}$ for $y=(x_{1},x_{1},\cdots,x_{n},x_{n}: \tau)\in \Delta SG_{n}$.
 Put $t_{+}=\prod_{i=1}^{n}(1+(2i-1\ 2i))$ and
  $t_{-}=\prod_{i=1}^{n}(1-(2i-1\ 2i))$. 
  Since $\langle (2i-1\ 2i)| 1 \leq i \leq n\rangle \cap \phi^{-1}(S_{n})
=
\{1\}$, the elements $t_{+} \otimes k_{\lambda}, t_{-} \otimes k_{\lambda}\in S^{\lambda,\lambda}(\chi,\xi)$ are nonzero and satisfy$$
\begin{cases}
xt_{+} \otimes k_{\lambda}=t_{+} \otimes y k_{\lambda}=\xi(x_{1}x_{2}\cdots x_{n})t_{+} \otimes k_{\lambda},\\
xt_{-} \otimes k_{\lambda}=\delta(\sigma)t_{-} \otimes y k_{\lambda}=\delta(\sigma)\xi(x_{1}x_{2}\cdots x_{n})t_{-} \otimes k_{\lambda}.
\end{cases}$$
Therefore we obtain (5).
By  Remarking an isomorphism  $S^{\lambda'}(\overline{\chi} \otimes \xi) 
\cong S^{\lambda}(\overline{\chi})\otimes S^{(1^n)}(\xi)$,
 (6) can be obtained by the same way of the case of (5).
\end{proof}

\begin{theorem}\label{decomposition}
\begin{enumerate}
\item
Let $\pi\in {\mathcal L}(H_{n})$ and $\xi \in G^*$. We have
$$\Theta_{\xi,\pi}\uparrow_{HG_{n}}^{SG_{2n}}=\bigoplus_{\underline{\lambda}\in P^{**}_{\xi,\pi}}S(\underline{\lambda}).$$
In particular, the triplet $(SG_{2n},HG_{n},\Theta_{\xi,\pi})$ is a Gelfand triple.
\item
Let $x(\underline{\rho})$ be an element as in Theorem \ref{doublecoset}
for $\underline{\rho}=(\rho(R)|R \in G_{**}) \in P^{1,+}_{**}(n)$,
\begin{enumerate}
\item For $\varepsilon=1$ or $\delta$, we have
$$e_{n}^{\xi,\varepsilon} x(\underline{\rho}) e_{n}^{\xi,\varepsilon}
 \not=0 
 \Leftrightarrow \underline{\rho} \in P_{**}^{\xi,+}(n).$$
\item
For $\varepsilon=\iota$ or $\delta \otimes \iota$, we have
$$e_{n}^{\xi,\varepsilon} x(\underline{\rho}) e_{n}^{\xi,\varepsilon}
 \not=0
  \Leftrightarrow \underline{\rho} \in P_{**}^{\xi,-}(n).$$
\end{enumerate}
\end{enumerate}
\end{theorem}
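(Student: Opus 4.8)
The plan is to prove both parts at once by a dimension count for the Hecke algebra $\mathcal H^{\xi,\varepsilon}(SG_{2n},HG_n)=e_n^{\xi,\varepsilon}\mathbb CSG_{2n}e_n^{\xi,\varepsilon}$, where I write $\varepsilon=\pi$ for the chosen linear character of $H_n$. This algebra is (the opposite of) ${\rm End}_{SG_{2n}}(\Theta_{\xi,\varepsilon}\uparrow_{HG_n}^{SG_{2n}})$, so if $\Theta_{\xi,\varepsilon}\uparrow=\bigoplus_\gamma m_\gamma\gamma$ then $\dim\mathcal H^{\xi,\varepsilon}=\sum_\gamma m_\gamma^2$. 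On the other hand, by Theorem \ref{doublecoset} the elements $e_n^{\xi,\varepsilon}x(\underline\rho)e_n^{\xi,\varepsilon}$ span $\mathcal H^{\xi,\varepsilon}$, and since distinct $\underline\rho$ are supported on distinct double cosets the nonzero ones are linearly independent; hence $\dim\mathcal H^{\xi,\varepsilon}$ equals the number of surviving double cosets. The whole argument is to squeeze this common number between two bounds that Proposition \ref{equals} forces to agree.

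For the upper bound I would invoke Proposition \ref{doublecoset1}: every surviving $\underline\rho$ lies in $P_{**}^{\xi,+}(n)$ when $\varepsilon\in\{1,\delta\}$ and in $P_{**}^{\xi,-}(n)$ when $\varepsilon\in\{\iota,\delta\otimes\iota\}$, so $\dim\mathcal H^{\xi,\varepsilon}\le|P_{**}^{\xi,\pm}(n)|$ (the sign chosen according to $\varepsilon$). For the lower bound I would exhibit, for each $\underline\lambda\in P^{**}_{\xi,\pi}(n)$, a nonzero $\Theta_{\xi,\pi}$-covariant vector in $S(\underline\lambda)$; by Frobenius reciprocity this gives $m_{S(\underline\lambda)}\ge1$, and as these $S(\underline\lambda)$ are pairwise non-isomorphic, $\sum_\gamma m_\gamma\ge|P^{**}_{\xi,\pi}(n)|$. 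Proposition \ref{equals} identifies $|P^{**}_{\xi,\pi}(n)|=|P_{**}^{\xi,\pm}(n)|$, so chaining the estimates yields
$$|P^{**}_{\xi,\pi}(n)|\le\sum_\gamma m_\gamma\le\sum_\gamma m_\gamma^2=\dim\mathcal H^{\xi,\varepsilon}\le|P_{**}^{\xi,\pm}(n)|=|P^{**}_{\xi,\pi}(n)|.$$
Thus every inequality is an equality. From $\sum m_\gamma=\sum m_\gamma^2$ with $m_\gamma\in\mathbb Z_{\ge0}$ I get $m_\gamma\in\{0,1\}$, so $\Theta_{\xi,\pi}\uparrow$ is multiplicity free and $(SG_{2n},HG_n,\Theta_{\xi,\pi})$ is a Gelfand triple; and since exactly $|P^{**}_{\xi,\pi}(n)|$ components occur while the $S(\underline\lambda)$, $\underline\lambda\in P^{**}_{\xi,\pi}(n)$, already account for that many, the decomposition is precisely $\bigoplus_{\underline\lambda\in P^{**}_{\xi,\pi}(n)}S(\underline\lambda)$. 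This proves (1), and part (2) drops out of the same equality: $\dim\mathcal H^{\xi,\varepsilon}=|P_{**}^{\xi,\pm}(n)|$ means the set of surviving $\underline\rho$ has cardinality $|P_{**}^{\xi,\pm}(n)|$, and since Proposition \ref{doublecoset1} already places it inside $P_{**}^{\xi,\pm}(n)$, the two sets coincide. This supplies the converse implication, which combined with Proposition \ref{doublecoset1} gives the stated equivalences, with $+$ for $\varepsilon\in\{1,\delta\}$ and $-$ for $\varepsilon\in\{\iota,\delta\otimes\iota\}$.

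The hard part will be the lower-bound construction. Proposition \ref{smallinv} produces the required $\Theta_{\xi,\pi}$-covariant vector only when $\underline\lambda$ is concentrated on a single character $\chi$ (its parts (1)--(4)) or on a single conjugate pair $\{\chi,\overline\chi\otimes\xi\}$ (parts (5)--(6)). For a general $\underline\lambda\in P^{**}_{\xi,\pi}(n)$ one must assemble these elementary vectors across all characters using the factorization $S(\underline\lambda)\cong\bigotimes_\chi S^{\lambda(\chi)}(\chi)\uparrow_{SG(\underline n)}^{SG_{2n}}$ together with a block subgroup $\prod_j HG_{m_j}\subset HG_n$ on which $\Theta_{\xi,\pi}$ restricts to the external product of the corresponding elementary characters. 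The delicate point is that the tensor of the elementary vectors is a priori covariant only under $\prod_j HG_{m_j}$, and one must promote it to an $HG_n$-covariant vector, equivalently verify that the relevant branching multiplicity is exactly one; I expect to settle this by a Littlewood--Richardson/Pieri argument built on the symmetric-group skeleton recorded in Proposition \ref{littlewood}. Once this single multiplicity is secured, the remaining steps are the bookkeeping displayed above.
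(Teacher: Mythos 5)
Your overall squeeze is exactly the paper's: lower-bound the number of constituents by $|P^{**}_{\xi,\pi}(n)|$, upper-bound $\dim\mathcal H^{\Theta_{\xi,\pi}}(SG_{2n},HG_n)$ by $|P_{**}^{\xi,\pm}(n)|$ via Proposition \ref{doublecoset1}, and let Proposition \ref{equals} collapse the chain, forcing multiplicity one and the converse in part (2). That bookkeeping is fine. The genuine gap is the step you yourself flag as ``the hard part'': producing, for an \emph{arbitrary} $\underline\lambda\in P^{**}_{\xi,\pi}(n)$, a nonzero $\Theta_{\xi,\pi}$-covariant vector in $S(\underline\lambda)$. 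You leave this unexecuted, and the route you sketch --- promote a $\prod_j HG_{m_j}$-covariant tensor to an $HG_n$-covariant one by verifying that ``the relevant branching multiplicity is exactly one'' via a Littlewood--Richardson/Pieri argument --- is both not carried out and misdirected: you do not need multiplicity exactly one (proving that at this stage would essentially presuppose the multiplicity-freeness you are trying to establish), you only need the $\Theta_{\xi,\pi}$-isotypic component of $S(\underline\lambda)\downarrow_{HG_n}$ to be nonzero, and an LR-type computation on the symmetric-group skeleton is not the natural tool for that.

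The paper closes this gap with a short idempotent computation that you should adopt. Write $S(\underline\lambda)\cong\mathbb CSG_{2n}e(\underline\lambda)$ where $e(\underline\lambda)=\prod_\chi e(\lambda(\chi))$ is supported on the block subgroup $\widetilde{SG}(n(\underline\lambda))$ (one factor $SG_{|\lambda(\chi)|}$ for each $\chi$ with $\nu_2^\xi(\chi)=\pm1$, one factor $SG_{2|\lambda(\chi)|}$ for each pair $\{\chi,\xi\otimes\overline\chi\}$ with $\nu_2^\xi(\chi)=0$), and evaluate the convolution $e_n^{\Theta_{\xi,\pi}}e(\underline\lambda)$ at the identity. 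Because $e(\underline\lambda)$ is supported on $\widetilde{SG}(n(\underline\lambda))$ and $HG_n\cap\widetilde{SG}(n(\underline\lambda))=\widetilde{HG}(n(\underline\lambda))=\prod_j HG_{m_j}$, the sum defining $e_n^{\Theta_{\xi,\pi}}e(\underline\lambda)(1)$ ranges only over $\widetilde{HG}(n(\underline\lambda))$ and therefore factors as $\frac{|\widetilde{HG}(n(\underline\lambda))|}{|HG_n|}\prod_\chi e_m^{\Theta_{\xi,\pi}}e(\lambda(\chi))(1)$; each factor is a positive multiple of the elementary intertwining number, which is at least $1$ by Proposition \ref{smallinv}, so the product is nonzero and $S(\underline\lambda)$ occurs. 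Equivalently, in the Mackey decomposition of $\langle S(\underline\lambda)\!\downarrow_{HG_n},\Theta_{\xi,\pi}\rangle$ the identity double coset already contributes at least $1$ and every other term is nonnegative. No promotion of covariant vectors and no branching computation is needed; until you replace your sketched LR argument with something of this kind, part (1) --- and hence the converse direction of part (2), which you derive from it --- is not proved.
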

\begin{proof}
Set $\pi \in {\mathcal L}(H_{n})$,
  $\underline{\lambda}=(\lambda(\chi)\ |\  \chi \in G^{*} )\in P^{**}_{\xi,\pi}$,
   $n(\underline{\lambda})=(|\lambda(\chi)|\ |\  \chi \in G^{*} )$ and
$SG(n(\underline{\lambda}))=\prod_{\chi \in G^{*}}SG_{|\lambda(\chi)|}$.
We define a subgroup of $SG_{2n}$ by
$$\widetilde{SG}(n(\underline{\lambda}))
=\prod_{\substack{\chi \in G^{**}_{\xi}\\
 \nu_{2}^{\xi}(\chi)=\pm1}}
 SG_{|\lambda(\chi)|}
\times
\prod_{\substack{\chi \in G^{**}_{\xi}\\
 \nu_{2}^{\xi}(\chi)=0}}
 SG_{2|\lambda(\chi)|}.$$
 We remark that $\widetilde{SG}(n(\underline{\lambda}))$ contains 
$SG(n(\underline{\lambda}))$ as its subgroup. 
From the transitivity of the induction, we have  
$$S(\underline{\lambda})= \bigotimes_{\chi \in G^*} S^{\lambda(\chi)}(\chi)
 \uparrow_{SG(\underline{n})}^{SG_{n}}
 =
\bigotimes_{\substack{\chi \in G^{*}\\
 \nu_{2}^{\xi}(\chi)=\pm1}}
 S(\lambda(\chi))
\otimes
\bigotimes_{\substack{\chi \in G^{**}_{\xi}\\
 \nu_{2}^{\xi}(\chi)=0}}
 S^{\lambda(\chi),\lambda^*(\chi)}{(\chi,\xi)}\uparrow^{SG_{2n}}_{\widetilde{SG}(n(\underline{\lambda}))},$$
 where $
 \lambda^*(\chi)=
 \begin{cases}
  \lambda(\chi), &(\pi= 1,\delta),\\
  \lambda'(\chi),  & (\pi=\iota,\delta \otimes \iota).
 \end{cases}
 $ 
Let $e(\underline{\lambda}) \in \mathbb{C}\widetilde{SG}(n(\underline{\lambda}))$
 be an idempotent such that $S(\underline{\lambda}) \cong \mathbb{C}SG_{2n}e(\underline{\lambda})$.
 The idempotent $e(\underline{\lambda})$ can be described by the follows;
$$e(\underline{\lambda})=\prod_{ \chi \in G^{**}_{\xi}}e(\lambda(\chi)),$$
where $\mathbb{C}SG_{|\lambda(\chi)|}e(\lambda(\chi))=
S(\lambda(\chi))$ for  $\nu_{2}^{\xi}(\chi)=\pm1$
and
$\mathbb{C}SG_{2|\lambda(\chi)|}e(\lambda(\chi))=
S^{\lambda(\chi),\lambda^*(\chi)}(\chi,\xi)$ for  $\nu_{2}^{\xi}(\chi)=0$.
We define a subgroup of $\widetilde{SG}(n(\underline{\lambda}))$ by
$$\widetilde{HG}(n(\underline{\lambda}))
=HG_{n} \cap \widetilde{SG}(n(\underline{\lambda}))=
\prod_{\substack{\chi \in G^{*}\\
 \nu_{2}^{\xi}(\chi)=\pm1}}
 HG_{
 \frac{|\lambda(\chi)|}{2}}
\times
\prod_{\substack{\chi \in G^{**}_{\xi}\\
 \nu_{2}^{\xi}(\chi)=0}}
HG_{|\lambda(\chi)|}.$$
Let $e^{\Theta_{\xi,\pi}}_{n}$ be an idempotent of ${\mathbb C}HG_{n}$ affording to $\Theta_{\xi,\pi}$.
Put $e^{\xi,\pi}_{
\widetilde{HG}(n(\underline{\lambda}))}=\frac{1}{|\widetilde{HG}(n(\underline{\lambda}))|}
\sum_{x \in \widetilde{HG}(n(\underline{\lambda}))}\overline{\Theta_{\xi,\pi}(x)}x$.
 We compute
 \begin{align*}
&e^{\Theta_{\xi,\pi}}_{n}e(\underline{\lambda})(1)
=
\frac{
|\widetilde{HG}(n(\underline{\lambda}))|}
{|HG_{n}|}
e^{\xi,\pi}_{
\widetilde{HG}(n(\underline{\lambda}))
}e(\underline{\lambda})(1)\\
&=
\frac{
|\widetilde{HG}(n(\underline{\lambda}))|
}{|HG_{n}|}
\prod_{\substack{\chi \in G^{*}\\
 \nu_{2}^{\xi}(\chi)=\pm1}}
e^{\Theta_{\xi,\pi}}_{{\frac{|\lambda(\chi)|}{2}}}e(\lambda(\chi))(1)
\times
\prod_{\substack{\chi \in G^{**}_{\xi}\\
 \nu_{2}^{\xi}(\chi)=0}}
e^{\Theta_{\xi,\pi}}_{{|\lambda(\chi)|}}e(\lambda(\chi))(1).
 \end{align*}
Since each $e^{\Theta_{\xi,\pi}}_{m}e(\lambda(\chi))(1)$'s in the equation above is a non-zero element from Proposition \ref{smallinv},
  $e^{\Theta_{\xi,\pi}}_{n}e(\underline{\lambda})$ is nonzero. 
  Therefore $S(\underline{\lambda})$ is an irreducible component of $\Theta_{\xi,\pi}\uparrow_{HG_{n}}^{SG_{2n}}$.
 Proposition \ref{doublecoset1} gives us an upper bound 
  of the dimension of $\mathcal{H}^{\Theta_{\xi,\pi}}(SG_{2n},HG_{n}).$
  In general, the number of distinct irreducible representations in $\Theta_{\xi,\pi}\uparrow_{HG_{n}}^{SG_{2n}}$ 
  is less than $\dim \mathcal{H}^{\Theta_{\xi,\pi}}(SG_{2n},HG_{n})$.
  By combining these facts and  Proposition \ref{equals}, we have the both claims of this theorem.
\end{proof}
\section{ $\Theta_{\xi,\pi}$-Spherical Functions}
The rest part of the paper is devoted to computations of $\Theta_{\xi,\pi}$-spherical functions of our commutative
Hecke algebra $\mathcal{H}^{\Theta_{\xi,\pi}}(SG_{2n},HG_{n})=e^{\Theta_{\xi,\pi}}_{n}{\mathbb C}SG_{2n}e^{\Theta_{\xi,\pi}}_{n}$.
We remark $\Theta_{\xi,\delta}=\Theta_{1,\delta} \otimes \Theta_{\xi,1}$  and
 $\Theta_{\xi,\delta \otimes \iota}=\Theta_{1,\delta} \otimes \Theta_{\xi,\iota}$,
thus we have $\mathcal{H}^{\Theta_{\xi,\delta}}(SG_{2n},HG_{n})\cong \mathcal{H}^{\Theta_{\xi,1}}(SG_{2n},HG_{n})$ and
 $\mathcal{H}^{\Theta_{\xi,\delta \otimes \iota}}(SG_{2n},HG_{n}) \cong \mathcal{H}^{\Theta_{\xi,\iota}}(SG_{2n},HG_{n})$
 (cf. \cite{mac}).
Therefore  we consider the cases of $\pi=1$ and $\iota$.
\begin{theorem}\label{spfnidem}
For $\underline{\lambda} \in P^{**}_{\xi,\pi}(n)$,
the functions 
$$\Omega_{\underline{\lambda}}^{\xi,\pi}=
 \frac
 {|HG_{n}||SG_{2n}|}
 {|HG_{n} \cap \widetilde{SG}(n(\underline{\lambda}))|\dim S(\underline{\lambda})}
e^{\xi,\pi}_{HG_{n}}e(\underline{\lambda}) \in {\mathbb C}SG_{2n}e(\underline{\lambda}).$$
are the $\Theta_{\xi,\pi}$-spherical functions of $(SG_{2n},HG_{n},\Theta_{\xi,\pi})$.
Here $e(\underline{\lambda})$ is an idempotent defined in the proof of Theorem \ref{decomposition}.
\end{theorem}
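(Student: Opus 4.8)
The plan is to identify $\Omega^{\xi,\pi}_{\underline\lambda}$ with the spherical function attached to the component $S(\underline\lambda)$ via the general formula recalled in Section~2. Write $\gamma=S(\underline\lambda)$, let $e_\gamma=\frac{\dim\gamma}{|SG_{2n}|}\sum_{g}\chi_\gamma(g^{-1})g$ be its central idempotent, and note $e^{\xi,\pi}_{HG_n}=e^{\Theta_{\xi,\pi}}_n$. That formula reads $\omega_\gamma=\frac{|SG_{2n}|}{\dim\gamma}\,e_\gamma\,e^{\xi,\pi}_{HG_n}$, and $\omega_\gamma$ is the unique element of the Hecke algebra $\mathcal H^{\Theta_{\xi,\pi}}(SG_{2n},HG_n)=e^{\xi,\pi}_{HG_n}{\mathbb C}SG_{2n}e^{\xi,\pi}_{HG_n}$ that lies in the $\gamma$-isotypic component and takes the value $1$ at the identity. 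Since the triple is Gelfand (Theorem~\ref{decomposition}), this Hecke algebra is commutative and its $\gamma$-component is one dimensional; hence it suffices to check that $\Omega^{\xi,\pi}_{\underline\lambda}$ lies in that line and satisfies $\Omega^{\xi,\pi}_{\underline\lambda}(1)=1$.

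First I would dispose of the easy properties. Because $e(\underline\lambda)$ is a primitive idempotent affording $\gamma$ we have $e_\gamma e(\underline\lambda)=e(\underline\lambda)$, so $\Omega^{\xi,\pi}_{\underline\lambda}\in{\mathbb C}SG_{2n}e(\underline\lambda)\subseteq e_\gamma{\mathbb C}SG_{2n}$ sits in the $\gamma$-isotypic component; and from $h\,e^{\xi,\pi}_{HG_n}=\Theta_{\xi,\pi}(h)e^{\xi,\pi}_{HG_n}$ for $h\in HG_n$ the element is left $\Theta_{\xi,\pi}$-equivariant. The substantial point is the right equivariance, i.e.\ that $e^{\xi,\pi}_{HG_n}e(\underline\lambda)$ already lies in $\mathcal H^{\Theta_{\xi,\pi}}(SG_{2n},HG_n)$. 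Here I would take $e(\underline\lambda)$ to be the self-adjoint (orthogonal) projection realizing $\gamma$, observe that $e^{\xi,\pi}_{HG_n}$ is self-adjoint as well, and exploit the symmetry $HG_n xHG_n=HG_n x^{-1}HG_n$ of Theorem~\ref{mzprev}: by multiplicity one the space $e^{\xi,\pi}_{HG_n}{\mathbb C}SG_{2n}e(\underline\lambda)$ is one dimensional, so $e^{\xi,\pi}_{HG_n}e(\underline\lambda)$ is the unique spherical vector of the copy ${\mathbb C}SG_{2n}e(\underline\lambda)\cong\gamma$ up to scale, and the symmetry forces the associated matrix coefficient to be invariant under $x\mapsto x^{-1}$. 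Consequently $e^{\xi,\pi}_{HG_n}e(\underline\lambda)$ and its adjoint $e(\underline\lambda)e^{\xi,\pi}_{HG_n}$ span the same one-dimensional $\gamma$-line of the Hecke algebra, whence $e^{\xi,\pi}_{HG_n}e(\underline\lambda)e^{\xi,\pi}_{HG_n}=e^{\xi,\pi}_{HG_n}e(\underline\lambda)$, giving the two-sided invariance.

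Finally I would pin down the normalization. Using the factorization $e(\underline\lambda)=\prod_{\chi\in G^{**}_\xi}e(\lambda(\chi))$ inside ${\mathbb C}\widetilde{SG}(n(\underline\lambda))$ and the corresponding factorization of $HG_n\cap\widetilde{SG}(n(\underline\lambda))$, exactly as in the proof of Theorem~\ref{decomposition}, the same computation there yields the coefficient of the identity
\begin{align*}
\big(e^{\xi,\pi}_{HG_n}e(\underline\lambda)\big)(1)=\frac{|HG_n\cap\widetilde{SG}(n(\underline\lambda))|}{|HG_n|}\prod_{\chi\in G^{**}_\xi}\big(e^{\Theta_{\xi,\pi}}_{m_\chi}e(\lambda(\chi))\big)(1),
\end{align*}
where each local factor is the value at the identity of the spherical function of the small Gelfand triple produced in Proposition~\ref{smallinv}, so it equals $\dim S(\lambda(\chi))$ divided by the order of the relevant wreath product. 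Multiplying these local values telescopes to $\dim S(\underline\lambda)/|SG_{2n}|$, and the displayed constant then gives $\Omega^{\xi,\pi}_{\underline\lambda}(1)=1$. Combined with the previous paragraph this identifies $\Omega^{\xi,\pi}_{\underline\lambda}$ with $\omega_\gamma$. I expect the main obstacle to be the right equivariance of the second paragraph, namely verifying that the single idempotent $e(\underline\lambda)$, rather than its two-sided $HG_n$-average, already produces a bi-$\Theta_{\xi,\pi}$-invariant element; the normalization is then a bookkeeping computation assembled from the local cases of Proposition~\ref{smallinv}.
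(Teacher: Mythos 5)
Your normalization argument is exactly the paper's proof. The paper verifies only two things: that $x\,\Omega^{\xi,\pi}_{\underline{\lambda}}=\Theta_{\xi,\pi}(x)\,\Omega^{\xi,\pi}_{\underline{\lambda}}$ for $x\in HG_n$ (which it calls clear), and the value at the identity, obtained by writing $e^{\xi,\pi}_{HG_n}e(\underline{\lambda})(1)=\frac{|HG_n\cap\widetilde{SG}(n(\underline{\lambda}))|}{|HG_n|}\prod_{\chi}e^{\xi,\pi}e(\lambda(\chi))(1)$ and evaluating each local factor as $\dim S^{\lambda(\chi)}(\chi)/|SG_{\lambda(\chi)}|$, so that the product telescopes to $\dim S(\underline{\lambda})/|SG_{2n}|$. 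On these two points your proposal coincides with the paper, and the underlying logic is the one you state: ${\mathbb C}SG_{2n}e(\underline{\lambda})\cong S(\underline{\lambda})$ is irreducible, so by multiplicity one it contains a unique $\Theta_{\xi,\pi}$-equivariant line, and $\Omega^{\xi,\pi}_{\underline{\lambda}}$ is the normalized vector on that line.

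The step you add --- upgrading this to two-sided invariance so as to match $\omega_\gamma=\frac{|G|}{\dim\gamma}e_\gamma e_\xi$ --- is not in the paper, and your argument for it does not go through. The element $e^{\xi,\pi}_{HG_n}e(\underline{\lambda})$ lies in $e^{\xi,\pi}_{HG_n}{\mathbb C}SG_{2n}e(\underline{\lambda})$ and its adjoint in $e(\underline{\lambda}){\mathbb C}SG_{2n}e^{\xi,\pi}_{HG_n}$; each of these is one-dimensional, but they are in general \emph{distinct} lines inside the simple two-sided ideal attached to $S(\underline{\lambda})$ (isomorphic to $M_d({\mathbb C})$, $d=\dim S(\underline{\lambda})$), and neither is contained in the Hecke algebra. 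Concretely, if under that isomorphism $e^{\xi,\pi}_{HG_n}$ becomes the orthogonal projection $v_0v_0^*$ onto the spherical line and $e(\underline{\lambda})$ a rank-one self-adjoint idempotent $uu^*$, then $e^{\xi,\pi}_{HG_n}e(\underline{\lambda})$ spans ${\mathbb C}\,v_0u^*$ while $e(\underline{\lambda})e^{\xi,\pi}_{HG_n}$ spans ${\mathbb C}\,uv_0^*$; these coincide, and land on the Hecke-algebra line ${\mathbb C}\,v_0v_0^*$, only when $u$ is proportional to $v_0$. The symmetry $HG_nxHG_n=HG_nx^{-1}HG_n$ constrains bi-invariant functions only, so it cannot be applied to $e^{\xi,\pi}_{HG_n}e(\underline{\lambda})$ before two-sided invariance is known; your asserted identity $e^{\xi,\pi}_{HG_n}e(\underline{\lambda})e^{\xi,\pi}_{HG_n}=e^{\xi,\pi}_{HG_n}e(\underline{\lambda})$ is therefore circular and, for an arbitrary primitive idempotent affording $S(\underline{\lambda})$, false. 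What holds for any choice of $e(\underline{\lambda})$ is precisely what the paper proves: $\Omega^{\xi,\pi}_{\underline{\lambda}}$ is the normalized $\Theta_{\xi,\pi}$-equivariant vector inside the realization ${\mathbb C}SG_{2n}e(\underline{\lambda})$; identifying its coefficient function with the bi-invariant matrix coefficient $\langle v_0,gv_0\rangle$ requires either a special choice of $e(\underline{\lambda})$ or a further two-sided average, which neither you nor the paper carries out.
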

\begin{proof}
It is clear that $x \Omega_{\underline{\lambda}}^{\xi,\pi}=\Theta_{\xi,\pi}(x)\Omega_{\underline{\lambda}}^{\xi,\pi}$ 
for $x \in HG_{n}$.
We compute
 \begin{align*}
&e^{\xi,\pi}_{HG_{n}}e(\underline{\lambda})(1)
=\frac{|HG_{n} \cap \widetilde{SG}(n(\underline{\lambda}))|}{|HG_{n}|}
e^{\xi,\pi}_{HG_{n} \cap \widetilde{SG}(n(\underline{\lambda}))}e(\underline{\lambda})(1)\\
&=
\frac{|HG_{n} \cap \widetilde{SG}(n(\underline{\lambda}))|}{|HG_{n}|}
\prod_{\substack{\chi \in G^{*}\\
 \nu_{2}^{\xi}(\chi)=\pm1}}
e^{\xi,\pi}_{HG_{\frac{|\lambda(\chi)|}{2}}}e(\lambda(\chi))(1)
\times
\prod_{\substack{\chi \in G^{**}_{\xi}\\
 \nu_{2}^{\xi}(\chi)=0}}
 e^{\xi,\pi}_{HG_{|\lambda(\chi)|}}e(\lambda(\chi))(1)\\
 &=
 \frac{|HG_{n} \cap \widetilde{SG}(n(\underline{\lambda}))|}{|HG_{n}|}
\prod_{\chi \in G^{**}_{\xi}}\frac{\dim S^{\lambda(\chi)}(\chi)}{|SG_{\lambda(\chi)}|}
= \frac{|HG_{n} \cap \widetilde{SG}(n(\underline{\lambda}))|\dim S(\underline{\lambda})}{|HG_{n}||SG_{2n}|}.
 \end{align*}
Therefore $\Omega_{\underline{\lambda}}^{\xi,\pi}(1)=1$.
\end{proof}
In the below, we compute $\Theta_{\xi,\pi}$-spherical functions for two special cases.

First we consider the case of
$\chi=\xi \otimes \overline{\chi}$ i.e., $\nu_{2}^{\xi}(\chi)=\pm1$.
Let $V_{\chi}$ be an irreducible unitary representation of $G$ affording $\chi$. 
Let $v_{1},v_{2},\cdots,v_{d}$ be an orthonormal basis of $V_{\chi}$.
Put an matrix representation $A_{\rho}$ of $G$ on $V_{\chi}$ by $$A_{\rho}(g)v_{j}=\sum_{i=1}^{d}r_{ij}(g)v_{i}.$$ 
We can take a unitary matrix $A=(a_{ij})$ such that $A^* A_{\rho}(g)A=D_{\xi}(g)\overline{A_{\rho}}(g)$. 
By the same computation of  the proof of Proposition \ref{mpf}, we have $A\overline{A}=\nu_{2}^{\xi}(\chi)E
\Leftrightarrow A=\nu_{2}^{\xi}(\chi){}^t\!A$.
%
Put $W(\lambda)=W(\lambda;\chi,\xi,\pi)
=
\begin{cases}
2\lambda, & (\nu_{2}^{\xi}(\chi)=1, \pi=1, \lambda \in P_{n})\\
(2\lambda)', & (\nu_{2}^{\xi}(\chi)=-1, \pi=1, \lambda \in P_{n})\\
D(\lambda),& (\nu_{2}^{\xi}(\chi)=1,\pi=\iota, \lambda \in SP_{n}).\\
D(\lambda)',& (\nu_{2}^{\xi}(\chi)=-1,\pi=\iota, \lambda \in SP_{n}).
\end{cases}$
If $\nu_{2}^{\xi}(\chi)=1$ (resp. $-1$), then $S^{W(\lambda)}$ have an element $v_{\lambda}^{\pi}$ such that $h v_{\lambda}^{\pi}=\pi(h)
v_{\lambda}^{\pi}$ (resp. $h v_{\lambda}^{\pi}=\pi(h)
\delta(h)v_{\lambda}^{\pi}$) for $h \in H_{n}$ and $|v_{\lambda}^{\pi}|=1$ (cf. Proposition\ref{smallinv}). 
 We define an inner product $\langle,\rangle$ of $S^{W(\lambda)}(\chi) \cong V_{\chi}^{\otimes 2n}\otimes S^{W(\lambda)}$ 
by
$$\langle \bigotimes_{j=1}^{2n}{t_{i_{j}}}\otimes t, \bigotimes_{j=1}^{2n}{u_{i_{j}}}\otimes u\rangle=\prod_{j=1}^{2n} 
\langle{t_{i_{j}}},{u_{i_{j}}}\rangle_{V_{\chi}} \times \langle t,u\rangle_{S^{W(\lambda)}},$$
where $t_{i_{j}},u_{i_j} \in V_{\chi}$ and $t,u \in S^{W(\lambda)}$.
%
 Put $\omega_{\xi,\pi}^{\chi,\lambda}({\sigma})=\langle v^{\pi}_{\lambda},\sigma v^{\pi}_{\lambda}\rangle_{S^{W(\lambda)}}$ 
 for $\sigma \in S_{2n}$. 
Put $f=\sum_{i,j}a_{ij} v_{i}\otimes v_{j}$.
 Then direct computations give us  $(g,g:1)f=\xi(g)(\chi)f$ and $(1,1:(12))f=\nu_{2}^{\xi}(\chi)f$.
 %
 Put $x_{n}=(1,\cdots,1,g:[2n])$, where $[2n]=(12\cdots2n)$. 
We compute
\begin{align*}
\langle f^{\otimes n}\otimes v_{\lambda}, x_{n}f^{\otimes n}\otimes v_{\lambda} \rangle 
&=\sum_{\substack{{i_{1},\cdots,i_{n}}\\{j_{1},\cdots,j_{n}}}}
a_{j_{n}i_{1}}\overline{a_{i_{1}j_{1}}}\
a_{j_{1}i_{2}}\overline{a_{i_{2}j_{2}}}
\cdots
(\sum_{k=1}^{n}a_{j_{n-1}k}r_{ki_{n}}(g))\overline{a_{i_{n}j_{n}}} \omega_{\xi,\pi}^{\chi,\lambda}{([2n])}\\
&={\rm tr}( (A \overline{A})^{n-1}(AA_{\rho}(g)\overline{A}))\omega_{\xi,\pi}^{\chi,\lambda}{([2n])}
={\rm tr}( (A \overline{A})^{n-1}(AA_{\rho}(g)A^{-1})(A\overline{A}))\omega_{\xi,\pi}^{\chi,\lambda}{([2n])}\\
&=(\nu_{2}^{\xi}(\chi))^{n-1}{\rm tr} A_{\rho}(g)\omega_{\xi,\pi}^{\chi,\lambda}{([2n])}
=(\nu_{2}^{\xi}(\chi))^{n}\chi(g)\omega_{\xi,\pi}^{\chi,\lambda}{([2n])}
\end{align*}
and
\begin{align*}
\langle f^{\otimes n}\otimes v_{\lambda}, x_{n}f^{\otimes n}\otimes v_{\lambda} \rangle 
=(\dim V_{\chi})^n.
\end{align*}
We repeat the same computations and have
$$
\langle f^{\otimes 2n}\otimes v_{\lambda}, x(\underline{\rho})f^{\otimes 2n}\otimes v_{\lambda} \rangle =
(\nu_{2}^{\xi}(\chi))^{n}
\prod_{R \in G_{**}}
\chi(g_{R})^{\ell(\rho(R))}\omega_{\xi,\pi}^{\chi,\lambda}([\hat{\underline{\rho}}]).
$$
We use a notation $\omega_{\xi,\pi}^{\chi,\lambda}({\hat{\underline{\rho}}})$ instead of  $\omega_{\xi,\pi}^{\chi,\lambda}([\hat{\underline{\rho}}])$
and have
\begin{proposition}\label{2nsp}
The $\Theta_{\xi,\pi}$-spherical functions associated to $S^{W(\lambda)}(\chi)$  are given by
\begin{align*}\Omega^{\chi,\lambda}_{\xi,\pi}(x(\underline{\rho}))=\frac{
(\nu_{2}^{\xi}(\chi))^{n}
\omega_{\xi,\pi}^{\chi,\lambda}({\hat{\underline{\rho}}})}{\dim V_{\chi}^n}
\prod_{R \in G_{**}}
\chi(g_{R})^{\ell(\rho(R))}.
\end{align*}
\end{proposition}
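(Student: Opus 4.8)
The plan is to realize $\Omega^{\chi,\lambda}_{\xi,\pi}$ as the normalized matrix coefficient of the $HG_n$-semiinvariant vector inside the component $S^{W(\lambda)}(\chi)$, and then to evaluate that coefficient on the double-coset representatives $x(\underline{\rho})$ of Theorem \ref{doublecoset}. By the definition of a spherical function together with the multiplicity-freeness established in Theorem \ref{decomposition}, the component $S^{W(\lambda)}(\chi)$ carries a vector $v_0$, unique up to scalar, with $h v_0 = \Theta_{\xi,\pi}(h) v_0$ for all $h \in HG_n$, and its spherical function is $\Omega^{\chi,\lambda}_{\xi,\pi}(x) = \langle v_0, x v_0\rangle/\langle v_0,v_0\rangle$. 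Thus the problem reduces to producing one explicit semiinvariant vector and computing two inner products.

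First I would take $F = f^{\otimes n}\otimes v_\lambda^\pi$ as an unnormalized spherical vector, where $f=\sum_{i,j}a_{ij}\,v_i\otimes v_j$ is built from the intertwiner $A$ and $v_\lambda^\pi$ is the semiinvariant vector of $S^{W(\lambda)}$. Using the transformation rules $(g,g:1)f=\xi(g)f$ and $(1,1:(12))f=\nu_2^\xi(\chi)f$ together with the eigenvector properties of $v_\lambda^\pi$ recorded in Proposition \ref{smallinv}, one checks that $hF=\Theta_{\xi,\pi}(h)F$ for every $h=(h_1,h_1,\dots,h_n,h_n:\sigma)\in HG_n$; this is precisely the verification already carried out in the proof of Proposition \ref{smallinv} for the individual components. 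By multiplicity-freeness, $F$ is then a scalar multiple of the unit spherical vector $v_0$, so $\Omega^{\chi,\lambda}_{\xi,\pi}(x)=\langle F,xF\rangle/\langle F,F\rangle$. Since $A$ is unitary, $\|f\|^2={\rm tr}(AA^*)=\dim V_\chi$, whence $\langle F,F\rangle=(\dim V_\chi)^n$.

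It remains to evaluate $\langle F, x(\underline{\rho})F\rangle$. Because the inner product on $V_\chi^{\otimes 2n}\otimes S^{W(\lambda)}$ is by definition the product of the two factor inner products, this coefficient splits as a $V_\chi^{\otimes 2n}$-trace times the $S^{W(\lambda)}$-matrix element $\langle v_\lambda^\pi,\sigma_0 v_\lambda^\pi\rangle$, where $\sigma_0$ is the permutation part of $x(\underline{\rho})$. Since $x(\underline{\rho})=[x(R_1),\dots,x(R_s)]$ is block-diagonal, the trace factors over cycles, and each cycle of length $2\rho_j(R)$ carrying $g_R$ in its last slot contributes ${\rm tr}\bigl((A\overline{A})^{\rho_j(R)-1}(A A_\rho(g_R)\overline{A})\bigr)$, exactly as in the single-cycle computation for $x_n=(1,\dots,1,g:[2n])$. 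The cyclic identity ${\rm tr}\bigl((A\overline{A})^{m-1}A A_\rho(g)\overline{A}\bigr)=(\nu_2^\xi(\chi))^m\,{\rm tr}\,A_\rho(g)=(\nu_2^\xi(\chi))^m\chi(g)$, obtained from $A\overline{A}=\nu_2^\xi(\chi)E$, then collapses each factor; multiplying over all parts yields $(\nu_2^\xi(\chi))^n\prod_{R}\chi(g_R)^{\ell(\rho(R))}$, while $\sigma_0$ has cycle type $2\hat{\underline{\rho}}$ and so contributes the zonal value $\omega_{\xi,\pi}^{\chi,\lambda}(\hat{\underline{\rho}})$. Dividing by $\langle F,F\rangle=(\dim V_\chi)^n$ gives the asserted formula.

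The main obstacle is the decoupling in the final paragraph: one must check that the $G$-entries of $x(\underline{\rho})$ thread through each $2\rho_j(R)$-cycle so as to produce precisely the trace ${\rm tr}((A\overline{A})^{m-1}A A_\rho(g_R)\overline{A})$, independently of the $S^{W(\lambda)}$-factor, and that the residual permutation action reproduces the $(S_{2n},H_n,\pi)$ zonal value on the doubled cycle type $2\hat{\underline{\rho}}$. Once the cyclic trace is set up, the relation $A\overline{A}=\nu_2^\xi(\chi)E$ makes the collapse to $(\nu_2^\xi(\chi))^m\chi(g)$ immediate, so the genuine care lies in the bookkeeping of tensor positions and in confirming that $F$ really is (a scalar multiple of) the normalized spherical vector, rather than in the algebra itself.
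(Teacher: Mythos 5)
Your proposal is correct and follows essentially the same route as the paper: the paper also takes $f^{\otimes n}\otimes v_{\lambda}^{\pi}$ as the (unnormalized) spherical vector, computes $\langle f^{\otimes n}\otimes v_{\lambda}, f^{\otimes n}\otimes v_{\lambda}\rangle=(\dim V_{\chi})^{n}$, evaluates the matrix coefficient on a single $2n$-cycle via the trace $\mathrm{tr}\bigl((A\overline{A})^{n-1}AA_{\rho}(g)\overline{A}\bigr)=(\nu_{2}^{\xi}(\chi))^{n}\chi(g)$ using $A\overline{A}=\nu_{2}^{\xi}(\chi)E$, and then multiplies over the blocks of $x(\underline{\rho})$. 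The only addition on your side is the explicit appeal to multiplicity-freeness to identify $F$ with the normalized spherical vector, which the paper leaves implicit.
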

%
Next we consider the case of $\nu_{2}^{\xi}(\chi)=0$.
Let $S_{n}^{o}$ and $S_{n}^{e}$ be symmetric groups on  $\{1,3,5,\cdots,2n-1\}$
and  $\{2,4,6,\cdots,2n\}$ respectively. Put $SG_{n}^{o}=G \wr S_{n}^o$ and $SG_{n}^{e}=G \wr S_{n}^e$.
We consider the irreducible representation $S^{\mu,\mu^*}(\chi,\xi) =S^{\mu}(\chi)\otimes S^{\mu^*}(\xi\otimes \overline{\chi})
\uparrow_{SG_{n}^{o}\times SG_{n}^e}^{SG_{2n}}$, where $
 \mu^*=
 \begin{cases}
  \mu, &(\pi= 1),\\
  \mu',  & (\pi=\iota).
 \end{cases}
 $  We just denote by  $\Delta SG_{n}$ a subgroup
of $SG_{n}^{o} \times SG_{n}^{e}$ 
 defined by $HG_{n} \cap SG_{n}^{o} \times SG_{n}^{e}$.
 Let $T_{n}$ be a subgroup of $SG_{2n}$ defined by $T_{n}=\langle (1,\cdots,1;(2i-1,2i))\mid 1 \leq i \leq n\rangle$.
\begin{proposition}\label{ccc}
For $\eta \in {\mathcal L}(G)$, set $\hat{\eta} \in {\mathcal L}(\Delta G)$
 by $\hat{\eta}(g,g)=\eta(g)$.
Then we have 
\begin{align}\label{tw}
\hat{\eta}\uparrow^{G \times G}_{\Delta G}=\bigoplus_{\chi \in G^*} {\chi} \otimes ({\eta \otimes \overline{\chi} )}.
\end{align}
In particular, $(G \times G,\Delta G, \hat{\eta})$ is a Gelfand triple. The $\hat{\eta}$-spherical functions
$\omega_{\chi}$
 are given by
 $$\omega_{\chi}(x,y)=\frac{\eta(y^{-1})}{\chi(1)}\chi(x^{-1}y).$$
\end{proposition}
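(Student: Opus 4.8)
The plan is to treat the three assertions separately: the branching rule (\ref{tw}) from Frobenius reciprocity, the Gelfand property as an immediate consequence, and the spherical functions via an explicit relative invariant vector.

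First I would compute the multiplicity of an arbitrary irreducible $\chi_1 \otimes \chi_2$ of $G\times G$ in the induced module. By Frobenius reciprocity this equals $\langle \hat{\eta}, \chi_1 \otimes \chi_2\rangle_{\Delta G}$, which is exactly the intertwining number already recorded in the proof of Proposition \ref{smallinv}, namely $\frac{1}{|G|}\sum_{g\in G}\overline{\eta(g)}\chi_1(g)\chi_2(g)$. Rewriting this as $\langle \chi_2, \eta \otimes \overline{\chi_1}\rangle_{G}$ and using that $\eta\otimes\overline{\chi_1}$ is irreducible (a linear character times an irreducible character), the multiplicity is $\delta_{\chi_2,\,\eta\otimes\overline{\chi_1}}$. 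Summing over $\chi_1 \in G^*$ yields (\ref{tw}), and the dimension check $\sum_{\chi \in G^*}\chi(1)^2 = |G| = [G\times G:\Delta G]$ confirms that nothing is lost. Since every multiplicity lies in $\{0,1\}$, the induced module is multiplicity free, so $(G\times G, \Delta G, \hat{\eta})$ is a Gelfand triple by definition.

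For the spherical functions I would realize $\gamma = \chi \otimes (\eta \otimes \overline{\chi})$ on $W = V_\chi \otimes \overline{V_\chi}$, fixing a unitary matrix realization $\rho(g) = (r_{ij}(g))$ of $\chi$ with orthonormal basis $\{v_i\}$ of $V_\chi$, so that $(x,y)$ acts by $\rho(x)\otimes \eta(y)\overline{\rho}(y)$. The candidate relative invariant is the normalized diagonal vector $v_0 = \chi(1)^{-1/2}\sum_i v_i \otimes \overline{v_i}$; using the unitarity relation $\sum_\ell r_{i\ell}(g)\overline{r_{j\ell}(g)} = \delta_{ij}$ one checks both $(g,g)v_0 = \eta(g)v_0$ and $\langle v_0, v_0\rangle = 1$, so $v_0$ is the unique vector in the definition of the spherical function. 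Then I would expand, exactly as in the proof of Proposition \ref{mpf},
$$\omega_\chi(x,y) = \langle v_0, (x,y)v_0\rangle = \frac{\eta(y)}{\chi(1)}\sum_{m,\ell}r_{m\ell}(x)\overline{r_{m\ell}(y)} = \frac{\eta(y)}{\chi(1)}\,{\rm tr}\bigl(\rho(x)\rho(y)^{*}\bigr) = \frac{\eta(y)}{\chi(1)}\chi(xy^{-1}).$$

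The hard part will be purely the bookkeeping of conventions needed to match this to the stated $\frac{\eta(y^{-1})}{\chi(1)}\chi(x^{-1}y)$. That expression is precisely the complex conjugate of the line above, so it corresponds to taking the spherical function in the form $\overline{\langle v_0,(x,y)v_0\rangle}$; the identification then follows at once from $\eta(y^{-1}) = \overline{\eta(y)}$ (since $\eta$ is a linear character) together with the class-function identities $\overline{\chi(xy^{-1})} = \chi(yx^{-1})$ and $\chi(yx^{-1}) = \chi(x^{-1}y)$. Fixing once and for all the conjugate-linear slot of $\langle\,,\rangle$, the direction of the $G\times G$-action, and these two conjugations is the only delicate point; everything else reduces to the matrix-coefficient orthogonality already used earlier in the paper.
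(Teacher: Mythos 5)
Your argument is correct, and the two halves compare differently with the paper's proof. For the decomposition (\ref{tw}) you do essentially what the paper does: the paper computes $\langle \chi\otimes\overline{\chi}\otimes\eta,\hat{\eta}\rangle_{\Delta G}=1$ for each candidate component and then closes the argument with the dimension count $\sum_{\chi}\chi(1)^2=|G|$, whereas you compute the multiplicity of an \emph{arbitrary} irreducible $\chi_1\otimes\chi_2$ and find it is $\delta_{\chi_2,\eta\otimes\overline{\chi_1}}$; your version makes the dimension check logically redundant (though it is a useful sanity check) and yields multiplicity-freeness without it. For the spherical functions the routes genuinely diverge: the paper invokes the averaging formula $\omega_{\chi}(x,y)=\frac{1}{|\Delta G|}\sum_{(g,g)}\chi_\gamma(x^{-1}g,y^{-1}g)\eta(g^{-1})$ (equivalently $\omega_\gamma=\frac{|G|}{\dim\gamma}e_\gamma e_{\hat\eta}$, citing Macdonald) and evaluates it by the convolution identity $\sum_g\chi(x^{-1}g)\chi(g^{-1}y)=\frac{|G|}{\chi(1)}\chi(x^{-1}y)$, never choosing a model of the representation; you instead exhibit the $\hat{\eta}$-relative invariant vector explicitly as the normalized diagonal tensor $v_0=\chi(1)^{-1/2}\sum_i v_i\otimes\overline{v_i}$ in $V_\chi\otimes\overline{V_\chi}$ and compute the matrix coefficient $\langle v_0,(x,y)v_0\rangle$ from the definition. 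Your approach buys an explicit spherical vector (which is in the spirit of how the paper later builds invariant vectors in Proposition \ref{smallinv}), at the cost of the convention bookkeeping you flag at the end; that last point is handled correctly, since with the inner product conjugate-linear in the second slot one lands directly on $\frac{\eta(y^{-1})}{\chi(1)}\chi(x^{-1}y)$, and with the other convention one gets its complex conjugate, reconciled exactly as you say via $\eta(y^{-1})=\overline{\eta(y)}$ and $\overline{\chi(xy^{-1})}=\chi(yx^{-1})=\chi(x^{-1}y)$. No gap.
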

\begin{proof}
First, we compute $\displaystyle{\langle \chi \otimes \overline{\chi}\otimes \eta,\hat{\eta} \rangle_{\Delta G}=\frac{1}{|\Delta G|}\sum_{(g,g) \in \Delta G} \chi(g)\overline{\chi(g)}\eta(g)
\overline{\eta(g)}}=\langle \chi, \chi \rangle_{G}=1$. 
Therefore $\hat{\eta}\uparrow^{G \times G}_{\Delta G}$ includes $ {\chi} \otimes ({\eta \otimes \overline{\chi} )}$ as an irreducible component. 
Since  we have $\hat{\eta}\uparrow^{G \times G}_{\Delta G}(1)=|G|$ and
 $\sum_{\chi \in G^*} {\chi}(1) {\eta(1) \overline{\chi} (1)}=|G|$, we obtain (\ref{tw}). 
 Second, we have  (see \cite[pp. 397 ]{mac})
 \begin{align*}
 \omega_{\chi}(x,y)=\frac{1}{|\Delta G|}\sum_{(g,g)\in \Delta G}
 \chi \otimes (\eta \otimes \overline{\chi})(x^{-1}g,y^{-1}g) \eta(g^{-1}).
 \end{align*}
We compute
 \begin{align*}
 \omega_{\chi}(x,y)&=\frac{1}{|\Delta G|}\sum_{(g,g)\in \Delta G}
 \chi (x^{-1}g)\eta(y^{-1}g) \overline{\chi}(y^{-1}g)\eta(g^{-1})\\
 &=\frac{\eta(y^{-1})}{|G|}\sum_{g \in G} \chi(x^{-1}g)\overline{\chi}({y^{-1}g})\\
 &=\frac{\eta(y^{-1})}{|G|}\chi^2(x^{-1}y)=\frac{\eta(y^{-1})}{\chi(1)}\chi(x^{-1}y).
 \end{align*}
\end{proof}
%
From this proposition,  $S^{\mu}(\chi)\otimes S^{\mu^*}(\xi\otimes \overline{\chi})
$ is an irreducible component of 
$$\begin{cases}
S^{(n)}(\xi)\uparrow_{\Delta SG_{n}}^{SG_{n}^{o} \times SG_{n}^{e}} & (\pi=1)\\
S^{(1^n)}(\xi)\uparrow_{\Delta SG_{n}}^{SG_{n}^{o} \times SG_{n}^{e}} & (\pi=\iota).
 \end{cases}$$
Since $HG_{n}=T_{n} \times \Delta SG_{n}$ as a set,
 $e_{n}^{\Theta_{\xi,\pi}}$ enjoys a factorization formula $e_{n}^{\Theta_{\xi,\pi}}=e_{n}^{\pi}e_{n}^{\xi,\pi}=e_{n}^{\xi,\pi}e_{n}^{\pi}$.
 Here  
 ${e_{n}^{\pi}=\frac{1}{|T_{n}|}\sum_{t \in T_{n}}\pi(t)t}$
  and 
 $ {e_{n}^{\xi,\pi}=\frac{1}{|\Delta SG_{n}|}\sum_{x \in \Delta SG_{n}}\overline{\Theta_{\xi,\pi}(x)}x}$.
We define a function on $SG_{2n}$ by
$$F_{\lambda}^{\xi,\pi}
=e_{n}^{\Theta_{\xi,\pi}} e_{\lambda}(\chi) \times e_{\lambda^*}(\overline{\chi}\otimes \xi) e_{n}^{\Theta_{\xi,\pi}}
=e_{n}^{\pi}(e_{n}^{\xi,\pi} e_{\lambda}(\chi) \times e_{\lambda^*}(\overline{\chi}\otimes \xi)e_{n}^{\xi,\pi})e_{n}^{\pi},$$
where  $e_{\lambda}(\chi)$ and  $e_{\lambda^*}(\overline{\chi}\otimes \xi)$ are idempotents corresponding to 
 irreducible $SG_{n}^o$-modules $S^{\lambda}(\chi)$  and
  $SG_{n}^e$-module $S^{\lambda^*}(\overline{\chi}\otimes \xi)$ respectively. 
Proposition \ref{ccc} gives us a fact that a function $\widetilde{F_{\lambda}^{\xi,\pi}}=
\frac{h(\lambda)^{2}|G|^{2n}}{\chi(1)^{2n}}
e_{n}^{\xi,\pi} e_{\lambda}(\chi) \times e_{\lambda^*}(\overline{\chi}\otimes \xi)e_{n}^{\xi,\pi}$ is the
$\begin{cases}
 S^{(n)}(\xi) & (\pi=1)\\
  S^{(1^n)}(\xi) & (\pi=\iota)
 \end{cases}$-spherical function of a Gelfand triple $(SG_{n}^o\times SG_{n}^e,
\Delta SG_{n},\Theta_{\xi,\pi})$, where $\Theta_{\xi,\pi}$ is considered as the restriction of $\Theta_{\xi,\pi}$ to 
$\Delta SG_{n}$.
 For $x\in SG_{2n}$, we have
$$e_{n}^{\pi}\widetilde{F_{\lambda}^{\xi,\pi}}e_{n}^{\pi}(x)=\frac{1}{4^n}\sum_{t,s \in T_{n}}\pi(ts)\widetilde{F_{\lambda}^{\xi,\pi}}(sxt).$$
We set $x_{0}=(1,\cdots,1,g:(1,2,\cdots,2n))$ and $t_{n}=(12)(34)\cdots(2n-1,2n)$.
 Then the following is easy:
$$``sx_{0}t \in SG_{n}^o\times SG_{n}^e \Leftrightarrow s=1,t=(1,\cdots,1:t_{n})\ {\rm or}\ s=(1,\cdots,1:t_{n}), t=1".$$
Put $d_{n}^{\lambda}(\chi)
=\frac{1}{\dim S^{\lambda}(\chi)}$. 
Let $\chi_{\rho}^{\lambda}$
 be the irreducible character of $S_{n}$, indexed by the partition $\lambda$ and evaluated at the 
conjugacy class $\rho$.
Now we can compute (cf. \cite[pp. 149, 4.3.9 Lemma]{JK})
\begin{align*}
e_{n}^{\pi}\widetilde{F_{\lambda}^{\xi,\pi}}e_{n}^{\pi}(x_{0})&=
\frac{\pi(t_{n})
}{4^n}
\{ \widetilde{F_{\lambda}^{\xi,\pi}}(1,\cdots,1,g:(135\cdots2n-1))
+\widetilde{F_{\lambda}^{\xi,\pi}}(1,\cdots,1,g,1:(246\cdots2n))\}\\
&=
\begin{cases}
\frac{d^{\lambda}_{n}(\chi)
}{4^n}
(\xi(g^{-1})\chi(g)+\chi(g^{-1}))\chi^{\lambda}_{(n)} & (\pi=1)\\
\frac{d^{\lambda}_{n}(\chi)
}{4^n}
(\xi(g^{-1})\chi(g)+(-1)^{n-1}\chi(g^{-1}))\chi^{\lambda}_{(n)} & (\pi=\iota).
\end{cases}
\end{align*}
For general $x(\underline{\rho})$, repeating these computations with $x(\underline{\rho})$ in place of $x_0$ establishes the following
proposition.

\begin{proposition}\label{lalasp}
We  obtain $\Theta_{\xi,\pi}$-spherical function of $S^{\lambda,\lambda^*}(\chi,\xi)$
\begin{align*}
\Omega^{\chi,\lambda}_{\xi,\pi}(x(\underline{\rho}))
=\frac{ {\chi^{\lambda}_{\hat{\underline{\rho}}}}}{2^n\dim S^{\lambda}(\chi)}
 \prod_{\substack{R \in G_{**}}}
\prod_{i=1}^{n}
 \left(
 \overline{\xi(g_{R})} \chi(g_R)
 + 
\epsilon_{\pi}^{\rho_{i}(R)-1}\overline{\chi(g_{R})}\right)^{m_{i}(\rho(R))},
\end{align*}
where 
$\epsilon_{\pi}=\begin{cases}
1  & (\pi=1)\\
-1& (\pi=\iota).
\end{cases}$
\end{proposition}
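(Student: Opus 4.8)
The plan is to identify $\Omega^{\chi,\lambda}_{\xi,\pi}$, the $\Theta_{\xi,\pi}$-spherical function attached to $S^{\lambda,\lambda^*}(\chi,\xi)$, with a suitably normalized version of the function $e_n^{\pi}\,\widetilde{F_\lambda^{\xi,\pi}}\,e_n^{\pi}$, and then to evaluate the latter on the double-coset representatives $x(\underline{\rho})$ of Theorem \ref{doublecoset} by a cycle-by-cycle reduction to the computation already carried out for $x_0$. First I would pin down the normalization. Using the factorization $e_n^{\Theta_{\xi,\pi}}=e_n^{\pi}e_n^{\xi,\pi}$ together with the general formula $\omega_{\gamma}=\frac{|G|}{\dim\gamma}e_{\gamma}e_{\xi}$, the spherical function is $e_n^{\pi}\,\widetilde{F_\lambda^{\xi,\pi}}\,e_n^{\pi}$ up to the scalar making its value at the identity equal to $1$. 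Evaluating $e_n^{\pi}\,\widetilde{F_\lambda^{\xi,\pi}}\,e_n^{\pi}(1)=\frac{1}{4^n}\sum_{s,t\in T_n}\pi(ts)\,\widetilde{F_\lambda^{\xi,\pi}}(st)$ and observing that $st$ lies in the support $SG_n^{o}\times SG_n^{e}$ of $\widetilde{F_\lambda^{\xi,\pi}}$ only when $s=t$ (so that $st=1$) yields the value $2^{-n}$; hence $\Omega^{\chi,\lambda}_{\xi,\pi}=2^n\,e_n^{\pi}\,\widetilde{F_\lambda^{\xi,\pi}}\,e_n^{\pi}$, which is consistent with the $x_0$-computation preceding the statement.

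Next, for a general representative I would write
$$\Omega^{\chi,\lambda}_{\xi,\pi}(x(\underline{\rho}))=\frac{2^n}{4^n}\sum_{s,t\in T_n}\pi(ts)\,\widetilde{F_\lambda^{\xi,\pi}}\bigl(s\,x(\underline{\rho})\,t\bigr),$$
and carry out the combinatorial heart of the argument: deciding for which $(s,t)$ the element $s\,x(\underline{\rho})\,t$ lands in $SG_n^{o}\times SG_n^{e}$. Since $x(\underline{\rho})=[x(R_1),\ldots,x(R_s)]$ is a product of independent blocks, each of which is a single $2\rho_i(R)$-cycle carrying the group element $g_R$, and since both the $T_n$-average and $\widetilde{F_\lambda^{\xi,\pi}}$ factor through these blocks, the sum factorizes over $R\in G_{**}$ and over the parts $\rho_i(R)$. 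For each block I would reproduce the dichotomy found for $x_0$: exactly two local choices of the $T_n$-components split the $2m$-cycle into an odd $m$-cycle and an even $m$-cycle, placing $g_R$ either on the odd or on the even side, while every other choice pushes the block outside $SG_n^{o}\times SG_n^{e}$ and contributes $0$.

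Finally, each surviving term is evaluated by Proposition \ref{ccc} applied to the diagonal triple $(G\times G,\Delta G,\hat{\xi})$ (i.e.\ with $\eta=\xi$), which converts the two local choices into the two summands $\overline{\xi(g_R)}\chi(g_R)$ and $\overline{\chi(g_R)}$, whereas the symmetric-group parts assemble into the single character value $\chi^{\lambda}_{\hat{\underline{\rho}}}$ (here the wreath-product character formula of \cite{JK} is what lets the cycle data be read off). The sign $\epsilon_{\pi}^{\rho_i(R)-1}$ attached to the second summand is precisely the value of $\pi(ts)$ on the element splitting a $2\rho_i(R)$-cycle, computed exactly as $\iota(\tau_m)=(-1)^{m-1}$ in Proposition \ref{-1}. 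Collecting the contribution of every block then produces the product over $R$ and over $i$ with multiplicities $m_i(\rho(R))$, and the prefactor $\frac{1}{2^n\dim S^{\lambda}(\chi)}$, which is the asserted formula.

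The hard part will be the bookkeeping in the factorization step: verifying that the $T_n$-average genuinely localizes to one block at a time, so that the per-cycle two-term structure multiplies out into a clean product rather than generating cross terms, and that the $\pi$-signs are tracked consistently across all blocks. This relies on the set-theoretic splitting $HG_n=T_n\times\Delta SG_n$ and on the product structure of $\widetilde{F_\lambda^{\xi,\pi}}$ inherited from Proposition \ref{ccc}.
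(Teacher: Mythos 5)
Your proposal follows essentially the same route as the paper: the paper likewise reduces to the auxiliary spherical function $\widetilde{F_{\lambda}^{\xi,\pi}}$ of the triple $(SG_{n}^{o}\times SG_{n}^{e},\Delta SG_{n},\Theta_{\xi,\pi})$ via Proposition \ref{ccc}, averages over $T_{n}$, identifies the two surviving splittings of each $2m$-cycle into an odd and an even cycle, and then concludes by ``repeating these computations with $x(\underline{\rho})$ in place of $x_{0}$.'' The one small correction is that the sign $\epsilon_{\pi}^{\rho_{i}(R)-1}$ arises from evaluating the restricted character on the doubled cycle $\tau_{m}\in\phi(S_{m})$ (as in Proposition \ref{-1}), i.e.\ from the twist $\lambda^{*}=\lambda'$, not from $\pi(ts)$ with $s,t\in T_{n}$ (on which $\iota$ is trivial); this does not affect the argument.
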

\section{Multi-Partitiversion of  Symmetric Functions}
%
Let $p_{r}(R)$ be a power sum symmetric function
with  variables $(x_{i}(R)\mid i \geq 1)$ for each $R \in G_{**}$
and $\Lambda[G_{**}]={\mathbb C}\langle p_{r}(R)\mid r \geq 1, R \in G_{**}\rangle$.
 We define a subalgebra $\Lambda^{\xi,\pi}_{G}$ of $\Lambda[G_{**}]$
by 
$$\begin{cases}\Lambda^{\xi,1}_{G}={\mathbb C}\langle p_{r}(R)\mid \  R\ is\ a\ complex\ or\  \xi \not\equiv-1\ on\ R
 \rangle,\\
\Lambda^{\xi,\iota}_{G}={\mathbb C}\langle p_{r}(R)
\mid
r\ is\ odd\ (resp.\ even),\ if\ 
\xi \equiv 1\ (resp. -1)\ on\ a\ real\ R
\rangle.
\end{cases}$$

Set, for $\underline{\rho} \in P_{**}$,
 $$P_{\underline{\rho}}(G_{**})=\prod_{R \in G_{**}}p_{\rho(R)}(R).$$
Change  variables by setting
\begin{align*}
p_{r}(\chi)&=
\sum_{{R  \in G_{**}; {\rm real}
}}
\!\!\!\!
\frac{\overline{\xi(g_{R})}\chi(g_{R})+\epsilon_{\pi}^{r-1} \overline{\chi(g_{R})}}{2\zeta_{g_{R}}}p_{r}(R)
+
\sum_{{R \in G_{**}; {\rm complex}
}}
\!\!\!\!
\frac{\overline{\xi(g_{R})}\chi(g_{R})+\epsilon_{\pi}^{r-1}\overline{\chi(g_{R})}}{\zeta_{g_{R}}}p_{r}(R).
\end{align*}
Here we remark $p_{r}(\chi)=\epsilon_{\pi}p_{r}(\xi \otimes \overline{\chi})$.
The second orthogonality relation gives us that $p_{r}(\chi)$'s generate the ring $\Lambda^{\xi,\pi}_{G}$. 
Put 
${\mathcal H}^{\Theta_{\xi,\pi}}_{n}={\mathcal H}^{\Theta_{\xi,\pi}}(SG_{2n},HG_{n})$.
Set ${\mathcal H}^{\xi,\pi}=\bigoplus_{n \geq 0}{\mathcal H}^{\Theta_{\xi,\pi}}(SG_{2n},HG_{n})$.
Here we consider to be ${\mathcal H}^{\Theta_{\xi,\pi}}_{0}={\mathbb C}$. 
We define a product on ${\mathcal H}^{\xi,\pi}$ by 
$fg=e^{\Theta_{\xi,\pi}}_{m+n}( f \times g) e^{\Theta_{\xi,\pi}}_{m+n} $ for $f \in {\mathcal H}^{\Theta_{\xi,\pi}}_{m}$ 
and $g \in {\mathcal H}^{\Theta_{\xi,\pi}}_{n}$, where $f \times g$ means a natural embedding of ${\mathbb C}S_{2m} \times 
S_{2n}$ in ${\mathbb C}S_{2m+2n}$. 
From Proposition \ref{doublecoset1}, we have a basis $\{e^{\Theta_{\xi,\pi}}x(\underline{\rho})e^{\Theta_{\xi,\pi}} \mid \underline{\rho} \in \bigcup_{n \geq 0} P_{**}^{\xi,\pi}(n)\}$ of ${\mathcal H}^{\xi,\pi}$. 

We define a map ${\mathcal H}^{\xi,\pi}$ onto $\Lambda^{\xi,\pi}_{G}$ by
$$CH_{\pi}:e_{n}^{\Theta_{\xi,\pi}}x(\underline{\rho})e_{n}^{\Theta_{\xi,\pi}} \mapsto \sqrt{
\prod_{R; \ real}
2^{(1-\epsilon_{\pi})
\ell(\rho(R))}}
 P_{\underline{\rho}}(G_{**})$$
 for $\underline{\rho} \in P_{\xi,\pi}(n)$.
Since there exists $a \in \phi(S_{n} \times S_{m})$  such that $a x(\underline{\rho})\times x(\underline{\sigma})a^{-1}=x(\underline{\rho}\cup \underline{\sigma})$ ($|\underline{\rho}|=n$ and $|\underline{\sigma}|=m$) ,
$CH$ is a ring isomorphism between ${\mathcal H}^{\xi,\pi}$ and $\Lambda^{\xi,\pi}_{G}$.
Under our definition, we have
$$CH_{\pi}(f)=CH_{\pi}(\sum_{g\in SG_{2n}}f(g)g)=\sum_{\underline{\rho} \in P_{**}^{\xi,\pi}(n)} |D_{\underline{\rho}}|f(x(\underline{\rho}))CH(e^{\Theta_{\xi,\pi}}x(\underline{\rho})e^{\Theta_{\xi,\pi}})$$
for $f \in  {\mathcal H}^{\Theta_{\xi,\pi}}_{n}$.
\begin{proposition}\label{each}
Let $J_{\lambda}^{(\alpha)}$ be a 
Jack symmetric function indexed by $\lambda$ at the parameter $\alpha$.
$$|HG_{n}|^{-1}CH_{\pi}(\Omega^{\lambda}_{\xi,\pi})=
\begin{cases}
\frac{|G|^n}{(\dim V_{\chi})^n}
J^{(2)}_{\lambda}(\chi), &(\nu_{2}^{\xi}(\chi)=1,\pi=1)\\
\frac{|G|^n}{(\dim V_{\chi})^n}
\tilde{J}^{(\frac{1}{2})}_{\lambda}(\chi), &(\nu_{2}^{\xi}(\chi)=-1,\pi=1)\\
\frac{|G|^n}{(\dim V_{\chi})^n}
\overline{h}_{\lambda} Q_{\lambda}(\chi), &(\nu_{2}^{\xi}(\chi)=\pm1,\pi=\iota\ :\ Q_{\lambda}{\rm\ is \ a\ Schur's\ Q-function})\\
\frac{|G|^n}{(\dim V_{\chi})^n}
h_{\lambda} S_{\lambda}(\chi)& (\nu_{2}^{\xi}(\chi)=0 :\ S_{\lambda}{\rm\ is \ a\ Schur\ function}),
\end{cases}$$
where $h_{\lambda}$ (resp. $\overline{h}_{\lambda}$) denotes the product of  hook lengths (resp. shifted hook lengths)  of  $\lambda$.
Here $\tilde{J}^{(\frac{1}{2})}_{\lambda}$ is defined by $\tilde{J}^{(\frac{1}{2})}_{\lambda}=
\psi_{2}(J_{\lambda}^{(\frac{1}{2})})$ for an isomorphism $\psi:p_{r}\mapsto \frac{1}{2}p_{r}$ on the 
ring of symmetric functions.
\end{proposition}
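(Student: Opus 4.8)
The plan is to evaluate $CH_{\pi}$ directly on the closed forms of the spherical functions obtained in Proposition \ref{2nsp} and Proposition \ref{lalasp}, and to match the resulting power-sum expansions against the known expansions of the classical symmetric functions. I would start from the expansion recorded just above the statement,
$$CH_{\pi}(\Omega^{\lambda}_{\xi,\pi})=\sum_{\underline{\rho} \in P_{**}^{\xi,\pi}(n)} |D_{\underline{\rho}}|\,\Omega^{\lambda}_{\xi,\pi}(x(\underline{\rho}))\,CH_{\pi}(e^{\Theta_{\xi,\pi}}x(\underline{\rho})e^{\Theta_{\xi,\pi}}),$$
and insert the explicit values of $\Omega^{\lambda}_{\xi,\pi}(x(\underline{\rho}))$. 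In each regime the spherical value factors as a purely symmetric-group quantity (the value $\omega^{\chi,\lambda}_{\xi,\pi}(\hat{\underline{\rho}})$ when $\nu_{2}^{\xi}(\chi)=\pm1$, or the character $\chi^{\lambda}_{\hat{\underline{\rho}}}$ when $\nu_{2}^{\xi}(\chi)=0$) times a $G$-theoretic factor assembled from $\chi(g_{R})$, $\xi(g_{R})$.

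I would then reorganize the sum over the double cosets $\underline{\rho}=(\rho(R)\mid R\in G_{**})$ by collecting all tuples sharing a fixed total partition $\mu=\hat{\underline{\rho}}=\bigcup_{R}\rho(R)$. The decisive computation is that, for a fixed $\mu$, the inner sum over the distributions of the parts of $\mu$ among the classes $R$ --- weighted by $|D_{\underline{\rho}}|$, the $\sqrt{\cdot}$ normalization in the definition of $CH_{\pi}$, the $G$-factor, and $P_{\underline{\rho}}(G_{**})$ --- collapses precisely to the power sum $p_{\mu}(\chi)$ in the changed variables. This is exactly what the definition of $p_{r}(\chi)$ was arranged to produce: a part of size $r$ lying in a class $R$ carries the coefficient $(\overline{\xi(g_{R})}\chi(g_{R})+\epsilon_{\pi}^{r-1}\overline{\chi(g_{R})})/(\mathrm{const}\cdot\zeta_{g_{R}})$ appearing inside $p_{r}(\chi)$, while the multinomial bookkeeping is supplied by the $z$-factors hidden in $|D_{\underline{\rho}}|$. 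When $\nu_{2}^{\xi}(\chi)=0$ this collapse is immediate from the product form $\prod_{R}\prod_{i}(\cdots)^{m_{i}(\rho(R))}$ already present in Proposition \ref{lalasp}; when $\nu_{2}^{\xi}(\chi)=\pm1$ one first absorbs $\prod_{R}\chi(g_{R})^{\ell(\rho(R))}$ into the product.

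After this collapse, $CH_{\pi}(\Omega^{\lambda}_{\xi,\pi})$ becomes a single sum $\sum_{\mu\vdash n}(\mathrm{coeff})\,p_{\mu}(\chi)$, which I would identify with a classical expansion. For $\nu_{2}^{\xi}(\chi)=0$ the coefficient is $z_{\mu}^{-1}\chi^{\lambda}_{\mu}$, so the Frobenius formula $S_{\lambda}=\sum_{\mu}z_{\mu}^{-1}\chi^{\lambda}_{\mu}p_{\mu}$ yields $S_{\lambda}(\chi)$. For $\nu_{2}^{\xi}(\chi)=\pm1$ and $\pi=1$ the coefficient is the zonal spherical value of $(S_{2n},H_{n},1)$, so by James \cite{james} the sum is the zonal polynomial $J^{(2)}_{\lambda}(\chi)$ when $\nu_{2}^{\xi}(\chi)=1$, while for $\nu_{2}^{\xi}(\chi)=-1$ the transposed shape $(2\lambda)'$ together with the $\alpha\mapsto1/\alpha$ duality for Jack polynomials produces the $\alpha=1/2$ twist $\tilde{J}^{(1/2)}_{\lambda}(\chi)$. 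For $\pi=\iota$ the coefficient is the twisted spherical value computed by Stembridge \cite{stem}, whose power-sum generating function is Schur's $Q$-function; here the sign of $\nu_{2}^{\xi}(\chi)$ affects only the shape $D(\lambda)$ versus $D(\lambda)'$ and is absorbed by the factor $\epsilon_{\pi}$ in the change of variables, so both cases give $\overline{h}_{\lambda}Q_{\lambda}(\chi)$. The prefactors $|G|^{n}/(\dim V_{\chi})^{n}$ and the hook-length products $h_{\lambda},\overline{h}_{\lambda}$ then emerge from the normalizations $(\dim V_{\chi})^{n}$ and $\dim S^{\lambda}(\chi)$ together with the standard passage between the two normalizations of each classical family and its power-sum coefficients.

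The main obstacle I anticipate is the constant bookkeeping: correctly matching $|D_{\underline{\rho}}|$, the $\zeta_{g_{R}}$, and the $2^{\ell(\rho(R))}$ factors (including the real-versus-complex dichotomy and the $\sqrt{\cdot}$ in $CH_{\pi}$) against the $z_{\mu}$, the power-sum normalizations of the classical symmetric functions, and the hook-length products, so that everything assembles into exactly $|G|^{n}/(\dim V_{\chi})^{n}$ times $h_{\lambda}$, $\overline{h}_{\lambda}$, or $1$. The second delicate point is confirming that the transposed shapes $(2\lambda)'$ and $D(\lambda)'$ correspond respectively to the $\alpha=1/2$ Jack polynomial and to the sign-twisted Schur $Q$-function, which I would control through the duality reflected in the identity $p_{r}(\chi)=\epsilon_{\pi}p_{r}(\xi\otimes\overline{\chi})$ already noted after the change of variables.
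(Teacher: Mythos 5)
Your proposal is correct and follows essentially the same route as the paper: the paper's proof runs the identical computation in the opposite direction, expanding $F_{\lambda}$ from the $p_{r}(\chi)$ variables into the $p_{r}(R)$ variables via the general change-of-variables identity and then recognizing the coefficients as $|D_{\underline{\rho}}|\,\Omega(x(\underline{\rho}))$ using the double-coset orders from \cite{mzonal}, which is exactly the multinomial collapse you describe read backwards. The ingredients — Propositions \ref{2nsp} and \ref{lalasp}, the $z_{\rho}$ versus $|D_{\underline{\rho}}|$ bookkeeping, and the quoted power-sum expansions of $J^{(2)}$, $J^{(1/2)}$, $Q_{\lambda}$, $S_{\lambda}$ — are the same.
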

\begin{proof}
For finite sets $A$ and $B$, 
we set two kinds of power sums $p_{r}(a)$ ($a\in A$) and $p_{r}(b)$ ($b \in B$)
and change variables by setting $p_{r}(a)=\sum_{b \in B} \frac{a_{b}(r)}{\zeta_{b}} p_{r}(b)$.
We consider a symmetric function ${F}_{\lambda}({a})=\sum_{\rho \vdash n}z_{\rho}^{-1}b_{\rho}^{\lambda}
p_{\rho}(a)$  and compute 
\begin{align*}
{F}_{\lambda}({a})
&=\sum_{\rho \vdash n}\frac{b_{\rho}^{\lambda}}
{\prod_{r=1}^{n}m_{r}(\rho)!r^{m_{r}(\rho)}}
\prod_{r=1}^{n}p_{r}(b)^{m_{r}(\rho)}\\
&=\sum_{\rho \vdash n}\frac{b_{\rho}^{\lambda}}
{\prod_{r=1}^{n}r^{m_{r}(\rho)}}
\prod_{r=1}^{n}
\left\{\sum_{\sum_{b}m_{r}(\rho(b))=m_{r}(\rho)}
\frac{1}
{\prod_{b \in
B}m_{r}(\rho(b))!} \times {\prod_{b \in
B}(\frac{a_{b}(r)}{\zeta_{b}} p_{r}(b))^{m_{r}(\rho(b))}}
\right\}
\\
&=\sum_{\rho \vdash n}{b_{\rho}^{\lambda}}
\prod_{r=1}^{n}
\left\{\sum_{\sum_{b}m_{r}(\rho(b))=m_{r}(\rho)}
\frac{1}
{\prod_{b \in
B}r^{m(\rho(b))}m_{r}(\rho(b))!} \times {\prod_{b \in
B}(\frac{a_{b}(r)}{\zeta_{b}} p_{r}(b))^{m_{r}(\rho(b))}}
\right\}
\\
&=
\sum_{\substack{(\rho(b)\mid b \in B)\vdash n,\\ \cup \rho(b)=\rho}}
\sum_{\rho \vdash n}
\frac{{b_{\rho}^{\lambda}}\prod_{b \in B}
\prod_{r=1}^{n}
a_{b}(r)^{m_{r}(\rho(b))}}
{
\prod_{b \in B}
z_{\rho(b)} \zeta_{c}^{\ell(\rho(b))}} 
{\prod_{b \in B}
\prod_{r=1}^{n}
{p_{r}(b)}^{m_{r}(\rho(b))}}.
\end{align*}
In the below, we set $A=G_{\xi}^{**}$, $B=G_{**}$, $\zeta_{b}=\zeta_{g_{R}} (b \in R)$ and 
$a_{g_{R}}(r)=
\begin{cases}
\frac{\overline{\xi(g_{R})}\chi(g_{R})+\epsilon_{\pi}^{r-1} \overline{\chi(g_{R})}}{2}, & (R\ real) \\
{\overline{\xi(g_{R})}\chi(g_{R})+\epsilon_{\pi}^{r-1} \overline{\chi(g_{R})}}, & (R\ complex).
\end{cases}
$
From \cite{mac,stem},
$$
\begin{cases}
J^{(2)}_{\lambda}
=
|H_{n}|\sum_{\rho \in P_{n}}z_{2\rho}^{-1}
\omega^{\chi,{\lambda}}_{\xi,1}([\underline{\hat{\rho}}])p_{\rho} & (\nu^{\xi}_{2}(\chi)=1),\\ 
J^{(\frac{1}{2})}_{\lambda'}=\frac{|H_{n}|}{2^n}\sum_{\rho \in P_{n}}z_{\rho}^{-1}
(-1)^n
\omega^{\chi,{\lambda}}_{\xi,1}([\underline{\hat{\rho}}])
p_{\rho}
 & (\nu^{\xi}_{2}(\chi)=-1),\\
Q_{\lambda}
=
2^ng^{\lambda}\sum_{\rho \in OP_{n}}z_{\rho}^{-1}
\nu_{2}^{\xi}(\chi)^n\omega^{\chi,{\lambda}}_{\xi,\iota}([\underline{\hat{\rho}}])
p_{\rho} & (\nu^{\xi}_{2}(\chi)=\pm 1)\\
S_{\lambda}=\sum_{\rho \in P_{n}}z_{\rho}^{-1}\chi_{\rho}^{\lambda}p_{\rho}.
\end{cases}
$$
%
We consider the cases of  $\nu^{\xi}_{2}(\chi)=\pm1$, i.e. $\chi=\xi \otimes \overline{\chi}$. In this case, we have
\begin{align*}
p_{r}(\chi)&=
\sum_{{R  \in G_{**}; {\rm real}
}}
\!\!\!\!
\frac{(1+\epsilon_{\pi}^r)\overline{\chi(g_{R})} }{2\zeta_{g_{R}}}p_{r}(R)
+
\sum_{{R \in G_{**}; {\rm complex}
}}
\!\!\!\!
\frac{(1+\epsilon_{\pi}^r)\overline{\chi(g_{R})} }{\zeta_{g_{R}}}p_{r}(R).
\end{align*}
From \cite[Proposition 5.15]{mzonal}, we have the order of each double coset $D_{\underline{\rho}}=HG_{n} x(\underline{\rho})HG_{n}$;
$$|D_{\underline{\rho}}|=|HG_{n}|^2
\prod_{\substack{R=C \in G_{**}\\C=C^{-1}}}
\frac
{1}
{z_{2\rho(R)}\zeta_{C}^{\ell(\rho(R))}}
\times
\prod_{\substack{R=C\cup C^{-1} \in G_{**}\\ C \not= C^{-1}}}
\frac{1}
{z_{\rho(R)} \zeta_{C}^{\ell(\rho(R))}}.$$
Using these notations, $F_{\lambda}$ is rewritten as
$$F_{\lambda}=\frac{1}{|HG_{n}|^2}\sum_{|\underline{\rho}|=n} \prod_{ R \in G_{**}}\chi(g_{R})^{\ell(\rho(R))}b_{\hat{\underline{\rho}}}^{\lambda}
|D_{\underline{\rho}}| \prod_{R \in G_{**}}2^{\ell(\rho(R))}P_{\underline{\rho}}(G_{**}).$$
Therefore we  have
$$
\begin{cases}
J^{(2)}_{\lambda}(\chi)=\frac{|H_{n}|(\dim V_{\chi})^n}{|HG_{n}|^2}\sum_{\underline{\rho}
\in P_{**}^{\xi,1}}|D_{\underline{\rho}}|\Omega^{\chi,\lambda}_{\xi,1}(x(\underline{\rho}))P_{\underline{\rho}}
=\frac{1}{|HG_{n}|}\frac{(\dim V_{\chi})^n}{|G|^n}CH_{\pi}(\Omega^{\chi,\lambda}_{\xi,1})  & (\nu^{\xi}_{2}(\chi)=1),\\
\tilde{J}^{(\frac{1}{2})}_{\lambda}(\chi)=\frac{|H_{n}|(\dim V_{\chi})^n}{|HG_{n}|^2}\sum_{\underline{\rho}
\in P_{**}^{\xi,1}}|D_{\underline{\rho}}|\Omega^{\chi,\lambda}_{\xi,1}(x(\underline{\rho}))P_{\underline{\rho}}
=\frac{1}{|HG_{n}|}\frac{(\dim V_{\chi})^n}{|G|^n}CH_{\pi}(\Omega^{\chi,\lambda}_{\xi,1}) & (\nu^{\xi}_{2}(\chi)=-1) ,\\
Q_{\lambda}(\chi)=\frac{2^n g^{\lambda}(\dim V_{\chi})^n}{|HG_{n}|^2}\sum_{\underline{\rho}
\in P_{**}^{\xi,\iota}}|D_{\underline{\rho}}|\Omega^{\chi,\lambda}_{\xi,\iota}(x(\underline{\rho}))
2^{\ell(\hat{\underline{\rho}})}P_{\underline{\rho}}
=\frac{\overline{h}_{\lambda}}{|HG_{n}|}\frac{(\dim V_{\chi})^n}{|G|^n}CH_{\pi}(\Omega^{\chi,\lambda}_{\xi,\iota})& (\nu^{\xi}_{2}(\chi)=\pm1).
\end{cases}
$$
Here $g^{\lambda}$ denotes the number of shifted standard tableaux of shape $\lambda$.
It is fact that $\overline{h}_{\lambda}=\frac{n!}{g^{\lambda}}$ .
We consider $\nu^{\xi}_{2}(\chi)=0$, i.e. $\chi \not=\xi \otimes \overline{\chi}$.
Then we have
$$S_{\lambda}(\chi)=\frac{2^n \dim S^{\lambda}(\chi)}{|HG_{n}|^2}\sum_{\underline{\rho}
\in P_{**}^{\xi,\pi}}|D_{\underline{\rho}}|\Omega^{\chi,\lambda}_{\xi,\pi}(x(\underline{\rho}))P_{\underline{\rho}}
=\frac{h_{\lambda}}{|HG_{n}|}\frac{(\dim V_{\chi})^n}{|G|^n}CH_{\pi}(\Omega^{\chi,\lambda}_{\xi,\pi}).$$
\end{proof}
Now we state the main theorem of this paper.
\begin{theorem}
We recall the notations of Theorem \ref{decomposition}.
For $\lambda \in P^{**}_{\xi,\pi}$, we have
$$|HG_{n}|^{-1}CH(\Omega^{\xi,\pi}_{\underline{\lambda}})=\prod_{{\chi} \in G^{**}_{\xi} } F_{\lambda}(\chi)$$
Here 
$$
F_{{\lambda}}(\chi)=
\begin{cases}
\frac{(\dim V_{\chi})^n}{|G|^n}
J^{(2)}_{\lambda}(\chi), &(\nu_{2}^{\xi}(\chi)=1,\pi=1)\\
\frac{(\dim V_{\chi})^n}{|G|^n}
\tilde{J}^{(\frac{1}{2})}_{\lambda}(\chi), &(\nu_{2}^{\xi}(\chi)=-1,\pi=1)\\
\frac{(\dim V_{\chi})^n}{|G|^n}
\overline{h}_{\lambda} Q_{\lambda}(\chi), &(\nu_{2}^{\xi}(\chi)=\pm1,\pi=\iota)\\
\frac{(\dim V_{\chi})^n}{|G|^n}
h_{\lambda} S_{\lambda}(\chi)& (\nu_{2}^{\xi}(\chi)=0),
\end{cases}
$$
\end{theorem}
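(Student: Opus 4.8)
The plan is to reduce the claim to the single-component computation of Proposition~\ref{each} via the multiplicativity of the characteristic map $CH_\pi$. The starting point is the factorization of the idempotent $e(\underline{\lambda})$ established in the proof of Theorem~\ref{decomposition}: one has $e(\underline{\lambda})=\prod_{\chi\in G^{**}_{\xi}}e(\lambda(\chi))$, where the factor $e(\lambda(\chi))$ cuts out $S(\lambda(\chi))$ inside $\mathbb{C}SG_{|\lambda(\chi)|}$ when $\nu_2^{\xi}(\chi)=\pm1$ and cuts out $S^{\lambda(\chi),\lambda^*(\chi)}(\chi,\xi)$ inside $\mathbb{C}SG_{2|\lambda(\chi)|}$ when $\nu_2^{\xi}(\chi)=0$, these subalgebras being the commuting tensor factors of $\mathbb{C}\widetilde{SG}(n(\underline{\lambda}))$. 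In the same way, the Hecke idempotent $e^{\Theta_{\xi,\pi}}_{n}$ restricts on $\widetilde{HG}(n(\underline{\lambda}))=HG_n\cap\widetilde{SG}(n(\underline{\lambda}))$ to a product of the corresponding idempotents on the factors $HG_{|\lambda(\chi)|/2}$ and $HG_{|\lambda(\chi)|}$.

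First I would combine these two factorizations with the explicit formula of Theorem~\ref{spfnidem} to express $\Omega^{\xi,\pi}_{\underline{\lambda}}$, up to the global scalar $\tfrac{|HG_n||SG_{2n}|}{|\widetilde{HG}(n(\underline{\lambda}))|\dim S(\underline{\lambda})}$, as the graded product $\prod_{\chi\in G^{**}_{\xi}}\Omega^{\chi,\lambda(\chi)}_{\xi,\pi}$ of the single-component spherical functions in $\mathcal{H}^{\xi,\pi}$. This is the usual statement that the spherical function of a direct product of Gelfand pairs is the external product of the factor spherical functions; here that external product is realised through the graded multiplication $fg=e^{\Theta_{\xi,\pi}}_{m+n}(f\times g)e^{\Theta_{\xi,\pi}}_{m+n}$ of $\mathcal{H}^{\xi,\pi}$, which matches the natural embedding of $\widetilde{SG}(n(\underline{\lambda}))$ into $SG_{2n}$.

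Next, since $CH_\pi\colon\mathcal{H}^{\xi,\pi}\to\Lambda^{\xi,\pi}_{G}$ is the ring isomorphism constructed before Proposition~\ref{each}, it carries this graded product to the ordinary product in $\Lambda^{\xi,\pi}_{G}$, so that
$$CH_\pi(\Omega^{\xi,\pi}_{\underline{\lambda}})=\prod_{\chi\in G^{**}_{\xi}}CH_\pi(\Omega^{\chi,\lambda(\chi)}_{\xi,\pi}).$$
Feeding in Proposition~\ref{each} factor by factor then produces the four advertised shapes of $F_{\lambda}(\chi)$: the Jack polynomials $J^{(2)}_{\lambda}(\chi)$ and $\tilde{J}^{(\frac{1}{2})}_{\lambda}(\chi)$ in the two $\pi=1$ subcases, the Schur $Q$-function $\overline{h}_{\lambda}Q_{\lambda}(\chi)$ when $\pi=\iota$ and $\nu_2^{\xi}(\chi)=\pm1$, and the Schur function $h_{\lambda}S_{\lambda}(\chi)$ when $\nu_2^{\xi}(\chi)=0$.

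The step I expect to be the main obstacle is the exact bookkeeping of the scalar normalizations. The global prefactor in $\Omega^{\xi,\pi}_{\underline{\lambda}}$ must be shown to distribute compatibly across the factors, using $\dim S(\underline{\lambda})=\prod_{\chi}\dim S(\lambda(\chi))$ together with the orders of the hyperoctahedral subgroups occurring in $\widetilde{HG}(n(\underline{\lambda}))$, so that after cancelling against the $|HG_{\bullet}|^{-1}$ and $|G|^{\bullet}/(\dim V_\chi)^{\bullet}$ constants supplied by Proposition~\ref{each} one is left precisely with the prefactor $F_{\lambda}(\chi)$ displayed in the theorem. Verifying that the multiplicativity of $CH_\pi$ is compatible with the \emph{normalized} spherical functions — and not merely with the double-coset basis on which $CH_\pi$ is defined — is the crux of the argument.
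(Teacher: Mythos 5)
Your proposal follows essentially the same route as the paper: factor the idempotent $e(\underline{\lambda})$ and the Hecke idempotent over $\widetilde{SG}(n(\underline{\lambda}))$ and $\widetilde{HG}(n(\underline{\lambda}))$, express $\Omega^{\xi,\pi}_{\underline{\lambda}}$ as the graded product $\prod_{\chi}\Omega^{\chi,\lambda(\chi)}_{\xi,\pi}$ in $\mathcal{H}^{\xi,\pi}$ (the paper does this via $\omega_{\chi}=\frac{|G|}{\chi(1)}e_{\chi}e_{\phi}$ applied to Theorem~\ref{spfnidem}), and then push through the ring isomorphism $CH_{\pi}$ to invoke Proposition~\ref{each} factor by factor. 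The scalar bookkeeping you flag as the crux is handled in the paper exactly as you anticipate, by cancelling $\dim S(\underline{\lambda})$ and the subgroup orders against the normalizations of Proposition~\ref{each}.
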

\begin{proof}
In Theorem \ref{spfnidem}, we have already written  down $\Theta_{\xi,\pi}$-spherical function in the form of
product of idempotents. 
Since $e^{\Theta_{\xi,\pi}}_{n}
=
e^{\Theta_{\xi,\pi}}_{n}e^{\Theta_{\xi,\pi}}_{HG_{n}\cap \widetilde{SG}(n(\underline{\lambda}))}$, we have 
\begin{align*}
e^{\Theta_{\xi,\pi}}_{n}e(\underline{\lambda})e^{\Theta_{\xi,\pi}}_{n}
=
  \dim S(\underline{\lambda})\frac{|\widetilde{SG}(n(\underline{\lambda}))|}{|SG_{2n}|}\times
 \frac{1}{|\widetilde{SG}(n(\underline{\lambda}))|}e^{\Theta_{\xi,\pi}}_{n} 
 \prod_{\chi \in G_{\xi}^{**}}\Omega_{\xi,\pi}^{\chi,\lambda(\chi)}e^{\Theta_{\xi,\pi}}_{n}.
\end{align*}
In the above, we use an general formula of $\phi$-spherical functions $\omega_{\chi}$
of a 
Gelfand triple $(G,H,\phi)$;  $\omega_{\chi}=\frac{|G|}{\chi(1)}e_{\chi}e_{\phi}$.
Therefore we have
$$\Omega_{\underline{\lambda}}^{\xi,\pi}=\frac{|HG_{n}|}{|HG_{n}\cap \widetilde{SG}(n(\underline{\lambda}))|}e^{\Theta_{\xi,\pi}}_{n} 
 \prod_{\chi \in G_{\xi}^{**}}\Omega_{\xi,\pi}^{\chi,\lambda(\chi)}e^{\Theta_{\xi,\pi}}_{n}.$$
 Since $CH_{\pi}$ is a ring isomorphism, we have
 $$CH_{\pi}(\Omega_{\xi,\pi}^{\chi,\lambda(\chi)})=|HG_{n}|\prod_{{\chi} \in G^{**}_{\xi} } F_{\lambda(\chi)}(\chi),$$
from Proposition \ref{each}.
\end{proof}
\section{The Case of $G={\mathbb Z}/2{\mathbb  Z}$}
In the last section, we consider a special case; $G={\mathbb Z}/2{\mathbb  Z}=\{1,-1\}$.
Then we remark that $SG_{2n}$ is isomorphic to $H_{2n}$ and 
$HG_{n}$ is isomorphic to a wreath product of Kleinsche Vierergruppe with a symmetric group.
In this case, $G_{**}=\{\{1\},\{-1\}\}$.
Let $\varepsilon$ be the sign representation of $G$ and $\pi$.
For $\pi=1$ or $\delta$ , the irreducible decomposition is 
$$\Theta_{\varepsilon,\pi}\uparrow_{HG_{n}}^{SG_{2n}}=\bigoplus_{|\lambda|=n } S^{\lambda,\lambda}(\chi,\varepsilon)$$
and
non-vanishing double coset is 
$\{HG_{n} x(\underline{\rho}) HG_{n}\mid \underline{\rho}=(\rho(\{1\}),\emptyset)\}$
from Theorem \ref{decomposition}.
From Proposition \ref{lalasp}, the $\Theta_{\varepsilon,1}$-spherical function corresponding to $S^{\lambda,\lambda}(\chi,\varepsilon)$ 
is 
$$\Omega^{1,\lambda}_{\varepsilon,\pi}(x(\rho,\emptyset))=\frac{h_{\lambda} }{2^{n-\ell(\rho)}n!}\chi_{\rho}^{\lambda}.$$ 
For $\pi=\iota$ or $\delta \otimes \iota$ , the irrreducible decomposition is 
$$\Theta_{\varepsilon,\pi}\uparrow_{HG_{n}}^{SG_{2n}}=\bigoplus_{|\lambda|=n } S^{\lambda,\lambda'}(\chi,\varepsilon)$$
and
non-vanishing double coset is 
$\{HG_{n} x(\rho(\{1\}),\rho(\{-1\})) HG_{n}\mid \rho(\{1\})\in OP,\rho(\{-1\})\in EP \}$
from Theorem \ref{decomposition}.
Put $\rho=\rho(\{1\})\cup \rho(\{-1\})$.
From Proposition \ref{lalasp}, the $\Theta_{\varepsilon,1}$-spherical function corresponding to $S^{\lambda,\lambda'}(\chi,\varepsilon)$ 
is 
\begin{align*}
\Omega^{1,\lambda}_{\varepsilon,\pi}(x(\rho(\{1\}),\rho(\{-1\})))=\frac{h_{\lambda} }{2^{n}n!}\chi_{\rho}^{\lambda}
2^{\ell(\rho(\{1\}))}(-2)^{\ell(\rho(\{-1\}))}=\frac{h_{\lambda} }{2^{n-\ell(\rho)}n!}\chi_{\rho}^{\lambda'}.
\end{align*}
In conclusion, tables of   $\Theta_{\varepsilon,\pi}$-spherical function of this case can be obtained by multiplying 
the character table of the symmetric group by some diagonal 
matrices from both sides.

\end{document}